\renewcommand{\labelenumi}{$\mathrm{({\roman{enumi}})}$}
\renewcommand{\emptyset}{\varnothing}
\renewcommand{\a}{\alpha}
\renewcommand{\b}{\beta}
\newcommand{\g}{\gamma}
\renewcommand{\k}{\kappa}
\renewcommand{\l}{\lambda}
\newcommand{\f}{\varphi}
\renewcommand{\phi}{\varphi}
\renewcommand{\o}{\omega}
\newcommand{\es}{\varnothing}
\newcommand{\F}{\mathcal{F}}
\newcommand{\la}{\langle} 
\newcommand{\ra}{\rangle}
\newcommand{\DLO}{{\operatorname{DLO}}}
\newcommand{\rest}{\!\restriction\!}
\newcommand{\dom}{\operatorname{dom}}
 \renewcommand{\le}{\leqslant}  
 \renewcommand{\ge}{\geqslant}  
 \newcommand{\Sii}{{\Sigma_1^1}}
 \newcommand{\Pii}{{\Pi_1^1}}
\newcommand{\reg}{\operatorname{reg}}
\newtheorem*{Thm*}{Theorem}
\newtheorem{Thm}{Theorem}[section]
\newtheorem{Lemma}[Thm]{Lemma}
\newtheorem{Prop}[Thm]{Proposition}
\newtheorem{Cor}[Thm]{Corollary}
\newtheorem{Claim}{Claim}[Thm]
\newenvironment{claim*}{\vspace{7pt}\noindent\textbf{Claim.}}{}
\theoremstyle{definition}
\newtheorem{Def}[Thm]{Definition}
\newtheorem{Question}[Thm]{Question}
\theoremstyle{remark}
\newtheorem*{Remark}{Remark}
\newcommand{\proofvpara}{\text{}}
\author{David Asper\'o, Tapani Hyttinen,\\ Vadim Kulikov, Miguel Moreno} \title{On large cardinals and generalized Baire spaces}
\begin{document}

\maketitle
\begin{abstract}
  Working under large cardinal assumptions, we study the
  Borel-reducibility between equivalence relations modulo
  restrictions of the non-stationary ideal on some fixed cardinal
  $\kappa$. We show the consistency of
  $E^{\l^{++},\l^{++}}_{\lambda\text{-club}}$, the relation of
  equivalence modulo the non-stationary ideal restricted to
  $S^{\lambda^{++}}_\lambda$ in the space
  $(\lambda^{++})^{\lambda^{++}}$, being continuously reducible to
  $E^{2,\l^{++}}_{\lambda^+\text{-club}}$, the relation of equivalence modulo
  the non-stationary ideal restricted to
  $S^{\lambda^{++}}_{\lambda^+}$ in the space $2^{\lambda^{++}}$. Then
  we show the consistency of $E^{2,\k}_{reg}$, the relation of equivalence
  modulo the non-stationary ideal restricted to regular cardinals in
  the space $2^{\k}$, being $\Sii$-complete. We finish by showing,
  for $\Pi_2^1$-indescribable $\k$, that the isomorphism relation
  between dense linear orders of cardinality $\kappa$ is
  $\Sii$-complete.
\end{abstract}
\section{Introduction}
Throughout this article we assume that $\kappa$ is an uncountable
cardinal that satisfies $\kappa^{<\kappa}=\kappa$.  The equivalence
relations modulo (restrictions of) the non-stationary ideal have
provided a very useful tool, and a main focus of study, in generalized
descriptive set theory. In \cite{FHK} it was shown that the relation
of equivalence modulo the non-stationary ideal is not a Borel
relation, and that if $V=L$, then it is not $\Delta_1^1$. The
equivalence relation modulo the non-stationary ideal restricted to a
set stationary $S$, denoted $E^{2,\k}_S$ (see Definition~\ref{def:E2S}), is
useful when it comes to studying the complexity of the isomorphism
relations of first order theories ($\cong_T$, see
Definition~\ref{def:Isomorphism}). In \cite{FHK} it was proved that,
under some cardinality assumptions, $E^{2,\k}_{S^\k_\o}$ is Borel reducible
to $\cong_T$ for every first order stable unsuperstable theory $T$,
where $S^\k_\l$ is the set of $\l$-cofinal ordinals
below~$\k$. Similar results were obtained in \cite{FHK} for the other
non-classifiable theories. This motivates the study of the
Borel-reducibility properties of $E^{2,\k}_S$.

\begin{Thm}[\cite{FHK}, Theorem 56]
  The following is consistent: For all stationary $S$ and $S'$,
  $E^{2,\k}_S$ is Borel reducible to $E^{2,\k}_{S'}$ if and only if
  $S\subseteq S'$.
\end{Thm}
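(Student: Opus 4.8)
The plan is to prove both implications, with the forward one holding in $\ZFC$ and the reverse one being where the consistency is genuinely used. Throughout I read ``$S\subseteq S'$'' modulo the non-stationary ideal, i.e.\ as $S\setminus S'\in\NS$, since altering $S$ on a non-stationary set changes neither $E^{2,\k}_S$ nor literal containment in a relevant way.

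For the direction $S\subseteq S'\Rightarrow E^{2,\k}_S\le_B E^{2,\k}_{S'}$ I would exhibit an explicit continuous reduction: send $f\in 2^\k$ to the function $F(f)$ with $F(f)(\a)=f(\a)$ for $\a\in S$ and $F(f)(\a)=0$ otherwise. This is even Lipschitz, since $F(f)\rest\a$ depends only on $f\rest\a$. To check it reduces, note $\{\a\in S':F(f)(\a)\ne F(g)(\a)\}=\{\a\in S\cap S':f(\a)\ne g(\a)\}$, and since $S\setminus S'\in\NS$ this differs from $\{\a\in S:f(\a)\ne g(\a)\}$ by a non-stationary set; hence one is stationary iff the other is, which is exactly $f\,E^{2,\k}_S\,g\iff F(f)\,E^{2,\k}_{S'}\,F(g)$.

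For the hard direction I would pass to a model in which $\Diam_\k(A)$ holds for every stationary $A\subseteq\k$ --- for instance $L$ (Jensen), where moreover $\k^{<\k}=\k$ holds for the relevant regular $\k$ --- and reduce everything to the key lemma: if $T:=S\setminus S'$ is stationary, then $E^{2,\k}_S\not\le_B E^{2,\k}_{S'}$. So suppose $F$ were a Borel reduction. First I would replace $F$ by a continuous function, using the standard fact that a $\k$-Borel function is continuous on a $\k$-comeager set; after this I may assume $F$ has a monotone modulus $\rho$, and there is then a club $C$ of ``closure points'' $\d$ at which $F(x)\rest\d$ depends only on $x\rest\d$. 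Next I would restrict attention to characteristic functions $f_X$ (for $X\subseteq T$, extended by $0$ off $T$): since $f_X\,E^{2,\k}_S\,f_Y\iff X\sd Y\in\NS$, the map $X\mapsto D_X:=\{\a\in S':F(f_X)(\a)\ne F(f_\es)(\a)\}$ satisfies, because differences are binary,
\[
X\sd Y\in\NS\quad\Longleftrightarrow\quad D_X\sd D_Y\in\NS ,
\]
so it is a Borel reduction of $E^{2,\k}_T$ to $E^{2,\k}_{S'}$, and at closure points $\d\in C$ the pattern $D_X\cap\d$ is determined by $X\cap\d$.

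Finally, the crux is to refute this reduction using $\Diam_\k(T)$. The plan is to build $X,Y\subseteq T$ by recursion on $\d<\k$, using the $\Diam_\k(T)$-sequence to guess, at stationarily many $\d\in T$, the relevant initial data ($X\cap\d$, $Y\cap\d$, and --- via a fixed $\k$-code of $\rho$ together with continuity --- the already-decided values $D_X\cap\d$, $D_Y\cap\d$). At each active guess I force $\d\in X\sd Y$, diagonalizing against club sets so that $X\sd Y$ is stationary (hence $f_X\not\mathrel{E^{2,\k}_S} f_Y$); simultaneously I use the predicted behaviour of $F$ and the remaining freedom in $X,Y$ to neutralize the effect of this new difference on coordinates in $S'$, keeping $D_X\sd D_Y$ disjoint from a prescribed club, so that $D_X\sd D_Y\in\NS$ and $F(f_X)\mathrel{E^{2,\k}_{S'}} F(f_Y)$ --- contradicting the displayed equivalence. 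I expect the main obstacle to be exactly this last step: controlling the global, non-local bits of the Borel function while carrying out the local back-and-forth, i.e.\ making precise that a difference created at some $\d\in T$ can be cancelled on $S'$ by later choices, so that a stationary symmetric difference on $T$ is matched by a non-stationary one on $S'$. Once the lemma is in place, combining it with the $\ZFC$ reduction above yields, in $L$, the full biconditional for all stationary $S,S'$, which establishes its consistency.
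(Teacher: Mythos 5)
The paper itself offers no proof of this statement---it is quoted from \cite{FHK}, Theorem 56---so there is no internal argument to compare against; I can only assess your proposal on its own terms. Your forward direction is correct (and reading $S\subseteq S'$ modulo $\NS$ is indeed forced on us, since enlarging $S$ by a non-stationary set changes neither side of the equivalence), and the reformulation of a hypothetical reduction as a map $X\mapsto D_X$ on subsets of $T=S\setminus S'$ with $X\sd Y\in\NS\iff D_X\sd D_Y\in\NS$ is fine. The problem is the step you yourself flag as ``the main obstacle'': it is not merely delicate, it is unachievable from the hypotheses you actually invoke. The key lemma you reduce everything to---that $\Diam_\k(T)$ for stationary $T=S\setminus S'$ refutes every Borel reduction of $E^{2,\k}_S$ to $E^{2,\k}_{S'}$---is false, and the paper contains the counterexamples. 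In the model of Theorem~\ref{FHK-Thm55} (equivalently Theorem~\ref{[FHK]-Theorem-55}(ii)) one has $\k=\l^{++}$, $S=S^{\l^{++}}_{\l}$, $S'=S^{\l^{++}}_{\l^+}$, so $S\setminus S'=S$ is stationary; GCH holds there, so $2^{\l^+}=\l^{++}$ gives $\Diam_\k(A)$ for every stationary $A\subseteq\l^{++}\setminus S^{\l^{++}}_{\l^+}$ (Shelah), in particular $\Diam_\k(T)$; and yet $E^{2,\l^{++}}_{\l\text{-club}}\le_c E^{2,\l^{++}}_{\l^+\text{-club}}$. Your choice of model is also already ruled out: by Theorem~\ref{[FHK]-Theorem-55}(i), in $L$ with $\k$ weakly compact one has $E^{2,\k}_{\l\text{-club}}\le_c E^{2,\k}_{reg}$ although $S^\k_\l\setminus\reg(\k)$ is stationary. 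So ``work in $L$ and diagonalize against $F$ using $\Diamond$'' cannot establish the hard direction: for some Borel $F$ in a model satisfying every hypothesis your sketch uses, a difference created at $\d\in T$ genuinely cannot be cancelled on $S'$ by later choices. Whatever yields the consistency must exploit something beyond $\Diamond$ and beyond $V=L$ (in \cite{FHK} this is a dedicated construction for a carefully chosen $\k$, not a consequence of $\Diamond$-guessing alone).

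A secondary, repairable issue is the order in which you apply the Baire property. You first replace $F$ by a function continuous on a comeager subset of $2^\k$ and only afterwards restrict to the characteristic functions $f_X$ of subsets of $T$. But $\{f_X\mid X\subseteq T\}$ is a closed nowhere dense copy of $2^T$ (all its members vanish on the stationary set $\k\setminus T\supseteq S'$), so a comeager set in the whole space may miss every $f_X$. You should restrict $F$ to that subspace first, apply the Baire property there, and then weave the $\k$ many dense open sets into the recursion constructing $X$ and $Y$. This can be fixed; the gap in the previous paragraph cannot, without a genuinely new idea.
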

\begin{Thm}[\cite{FHK}, Theorem 55]\label{FHK-Thm55}
  The following is consistent: $E^{2,\o_2}_{S_\o^{\o_2}}$ is Borel
  reducible to $E^{2,\o_2}_{S_{\o_1}^{\o_2}}$.
\end{Thm}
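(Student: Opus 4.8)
The plan is to produce, after a preparatory forcing, a reduction of the simplest possible shape — a \emph{coordinate map} — and thereby to isolate a purely combinatorial principle whose consistency is the real content. Given any function $g\colon S^{\o_2}_{\o_1}\to S^{\o_2}_\o$, define $f\colon 2^{\o_2}\to 2^{\o_2}$ by $f(\eta)(\gamma)=\eta(g(\gamma))$ for $\gamma\in S^{\o_2}_{\o_1}$ and $f(\eta)(\gamma)=0$ otherwise. Since $\cf(\o_2)=\o_2$, for each $\alpha<\o_2$ the restriction $f(\eta)\rest\alpha$ depends only on $\eta\rest\beta$ with $\beta=\sup\{g(\gamma)+1:\gamma<\alpha\}<\o_2$, so $f$ is continuous. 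If $D=\{\delta:\eta(\delta)\neq\xi(\delta)\}$, then $\{\gamma:f(\eta)(\gamma)\neq f(\xi)(\gamma)\}=g^{-1}[D\cap S^{\o_2}_\o]\subseteq S^{\o_2}_{\o_1}$; hence $f$ reduces $E^{2,\o_2}_{S^{\o_2}_\o}$ to $E^{2,\o_2}_{S^{\o_2}_{\o_1}}$ exactly when $g$ has the \emph{reflection property}
\[
A\text{ is stationary}\iff g^{-1}[A]\text{ is stationary}\qquad(A\subseteq S^{\o_2}_\o).
\]
So it suffices to force a $g$ with this property.

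Before forcing I would record the constraints it imposes. Applying the reflection property to singletons shows every fibre $g^{-1}[\{\beta\}]$ is nonstationary; in particular, by Fodor's lemma $g$ cannot be regressive on a stationary set, so $g(\gamma)>\gamma$ on a club. Unwinding the two implications, the property is the conjunction of (i) $g^{-1}[A]$ meets every club whenever $A$ is stationary, and (ii) for every club $C$ the set $\{\gamma\in S^{\o_2}_{\o_1}:g(\gamma)\in C\}$ contains a club. Clause (i) is a density requirement; clause (ii) is a continuity requirement on a map that is forced to raise cofinality from $\o_1$ to $\o$.

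To build the model I would force with the poset $\P$ of bounded approximations: conditions are partial functions $p$ with $\dom(p)$ a bounded subset of $S^{\o_2}_{\o_1}$ of size $\le\o_1$ and $\ran(p)\subseteq S^{\o_2}_\o$, each carrying a closed bounded ``promise'' (a proto-club) designed to secure clause (ii), ordered by end-extension compatible with the promises. Since domains are bounded of size $\le\o_1$ and $\cf(\o_2)=\o_2$, $\P$ is ${<}\o_2$-closed, so it adds no new $\o_1$-sequences and preserves every stationary subset of $\o_2$; a $\Delta$-system argument using $\o_2^{<\o_2}=\o_2$ gives the $\o_3$-chain condition, so all cardinals and cofinalities, and the hypothesis $\o_2^{<\o_2}=\o_2$, survive. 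Clause (i) is then pure genericity: for a stationary $A$ and a club $C$ the conditions placing some $\gamma\in C\cap S^{\o_2}_{\o_1}$ beyond $\sup\dom(p)$ with value in $A$ are dense, and by closure no club of the extension spoils this.

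The hard part is clause (ii). I must arrange the promises so that the generic $g$ is club-continuous enough that preimages of clubs contain clubs, despite $g(\gamma)>\gamma$ and $\cf(g(\gamma))=\o\neq\o_1=\cf(\gamma)$; no genuinely continuous $g$ can achieve this, since a continuous increasing map preserves cofinality at club-many limits, so the tension between (i) and (ii) is intrinsic. The way through is to make the promises \emph{reflect}: at an $\o_1$-cofinal point $\gamma$ already present in (the closure of) a condition, the value $g(\gamma)$ should be pinned inside every club that $\gamma$ already ``sees'', so that continuing a promise through an $\o_1$-cofinal limit automatically keeps its values inside the relevant clubs. This is exactly where a large-cardinal hypothesis is spent: enough reflection of stationary subsets of $S^{\o_2}_\o$ to points of $S^{\o_2}_{\o_1}$ — obtained, for instance, after collapsing a suitable large cardinal to $\o_2$ — is what makes the promises extendible at $\o_1$-cofinal limits without damaging clause (i). Proving this extension lemma, and then verifying that a generic filter yields a $g$ satisfying (ii) against \emph{every} club of the extension, is the technical core of the argument.
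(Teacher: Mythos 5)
Your reduction of the problem to the existence of a single function $g\colon S^{\omega_2}_{\omega_1}\to S^{\omega_2}_{\omega}$ with ``$A$ stationary $\iff g^{-1}[A]$ stationary'' is clean, but that combinatorial principle is refutable in ZFC, so no preparatory forcing can produce such a $g$ and the plan collapses at the outset. Here is the obstruction. As you note, each fibre $g^{-1}[\{\beta\}]$ is nonstationary (this already follows from your clause (ii)); taking the diagonal intersection of clubs avoiding the fibres gives a club $C^*$ with $g(\gamma)>\gamma$ for every $\gamma\in C^*\cap S^{\omega_2}_{\omega_1}$. Now let
$$B=\{\delta\in S^{\omega_2}_{\omega}\mid \exists\gamma\in C^*\cap S^{\omega_2}_{\omega_1}\ (\gamma<\delta\ \wedge\ g(\gamma)=\delta)\}.$$
If $B$ were stationary, the regressive map sending $\delta\in B$ to the least such $\gamma$ would, by Fodor's lemma, be constant with some value $\gamma^*$ on a stationary set, forcing $g(\gamma^*)$ to take more than one value; hence $B$ is nonstationary and $A=S^{\omega_2}_{\omega}\setminus B$ is stationary. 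But for every $\gamma\in C^*\cap S^{\omega_2}_{\omega_1}$ the ordinal $g(\gamma)$ lies in $B$ (witnessed by $\gamma$ itself), so $g^{-1}[A]\cap C^*=\emptyset$ and $g^{-1}[A]$ is nonstationary, contradicting your clause (i). The tension you observed between (i) and (ii) is therefore not something stationary reflection can dissolve: no coordinate map can be a reduction from $E^{2,\omega_2}_{S^{\omega_2}_{\omega}}$ to $E^{2,\omega_2}_{S^{\omega_2}_{\omega_1}}$.

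The actual argument (this is [FHK, Theorem 55], which the paper cites and later generalises in Theorem \ref{indescr-forcing}) evades exactly this obstruction by letting the value $F(\eta)(\alpha)$ depend on all of $\eta\restriction\alpha$ rather than on one coordinate: one forces, by Levy-collapsing a weakly compact cardinal of $L$ to $\omega_2$, a $\diamondsuit$-reflection sequence $\langle D_\alpha\rangle_{\alpha\in S^{\omega_2}_{\omega_1}}$ guessing the traces $Z\cap\alpha$ of stationary $Z\subseteq S^{\omega_2}_{\omega}$ on stationarily many $\alpha$, and then sets $F(\eta)(\alpha)=1$ if and only if $\eta^{-1}[1]\cap D_\alpha$ is stationary in $\alpha$ (Theorem \ref{FHK-Thm58}). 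If you want to repair your write-up, you should abandon the coordinate map and aim instead at forcing a $\diamondsuit$-reflection (or dual diamond) sequence and using a reduction of this ``local stationarity'' shape.
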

In \cite{HK} the authors used the Borel-reducibility properties of the
equivalence relation modulo the non-stationary ideal to prove that in
$L$, all $\Sii$ equivalence relations are reducible to $\cong_{\DLO}$,
where $\DLO$ is the theory of dense linear orderings without end points,
which means that this equivalence relation is on top of the
Borel-reducibility hierarchy among $\Sigma^1_1$-equivalence relations,
i.e. it is $\Sigma^1_1$-complete. This result stands in contrast to
the classical, countable case, $\k=\o$, for which it is known that all
other isomorphism relation are reducible to $\cong_{\DLO}$ \cite{FS},
but far from all $\Sigma^1_1$-equivalence relations are reducible to
it; even some Borel-equivalence relations such as $E_1$ are not
reducible to any isomorphism relations in the countable case. So the
question remained: is the $\Sigma^1_1$-completeness of $\cong_{\DLO}$
just a manifestation of the pathological behaviour of $L$ or is it a
more robust property in the generalised realm. One of the
contributions of this paper is that the $\Sigma^1_1$-completeness of
$\cong_{\DLO}$ is indeed a rather robust phenomenon and holds whenever
$\k$ has certain large cardinal properties
(Theorem~\ref{thm:DLOatPi21}).

It was asked in \cite{FHK14} and in \cite[Question 3.46]{KLLS}
whether or not the equivalence relation modulo the non-stationary
ideal on the Baire space can be reduced to the Cantor space for some fixed
cofinality: in our notation, whether or not
$E^{\k,\k}_{S^\k_\mu}\le E^{2,\k}_{S^\k_\mu}$.  We approach the problem
by proving several results in this direction.  Our results have the
forms
$$E^{\k,\k}_{S^\k_\mu}\le E^{2,\k}_{S^\k_{\mu*}},$$
$$E^{\k,\k}_{S^\k_\mu}\le E^{2,\k}_{reg(\k)},$$
and
$$E^{\k,\k}_{reg(\k)}\le E^{2,\k}_{reg(\k)},$$
where $\mu^*$ is larger than $\mu$ and $\reg(\k)$ is the set of regular cardinals
below~$\k$ Mahlo. These results are obtained under various assumptions and sometimes
in forcing extensions.

Many of the results in the area of reducibility of equivalence
relations modulo non-stationary ideals use combinatorial principles,
like $\Diamond$, and other reflection principles. In this paper we
bring also some large cardinal principles into the picture.

The generalized Baire space is the set $\kappa^\kappa$ with the
bounded topology.  For every $\zeta\in \kappa^{<\kappa}$, the set
$$[\zeta]=\{\eta\in \kappa^\kappa \mid \zeta\subset \eta\}$$ 
is a basic open set. The open sets are of the form $\bigcup X$ where
$X$ is a collection of basic open sets. The collection of
$\kappa$-Borel subsets of $\kappa^\kappa$ is the smallest set which
contains the basic open sets and is closed under unions and
intersections of length $\kappa$. Since in this paper we do not consider any other
kind of Borel sets besides $\k$-Borel, we will omit the prefix ``$\kappa$-''.\\

The generalized Cantor space is the subspace $2^\kappa\subset \k^\k$
with the relative subspace to\-po\-lo\-gy. For $X,Y\in \{\k^\k,
2^\k\}$, we say that a function $f\colon X\rightarrow Y$ is
\emph{Borel} if for every open set $A\subseteq Y$ the inverse image
$f^{-1}[A]$ is a Borel subset of $X$. Let $E_1$ and $E_2$ be
equi\-va\-lence relations on $X$ and $Y$ respectively. We say that
$E_1$ is \emph{Borel reducible} to $E_2$ if there is a Borel function
$f\colon X\rightarrow Y$ that satisfies $(\eta,\xi)\in
E_1\Leftrightarrow (f(\eta),f(\xi))\in E_2$.  We call $f$ a
\emph{reduction} of $E_1$ to $E_2$. This is denoted by $E_1\le_B E_2$,
and if $f$ is continuous,
then we say that $E_1$ is \emph{continuously reducible} to $E_2$, which is denoted by $E_1\le_c E_2$.\\

For every stationary $S\subset \kappa$, we define the equivalence
relation modulo the non-stationary ideal restricted to a stationary set $S$, on the
space $\l^\k$ for $\l\in\{2,\k\}$:

\begin{Def} \label{def:E2S} For every stationary $S\subset \kappa$ and
  $\l\in\{2,\k\}$, we define $E^{\l,\k}_S$ as the
  relation $$E^{\l,\k}_S=\{(\eta,\xi)\in \l^\k\times \l^\k\mid
  \{\a<\k\mid \eta(\a)\ne \xi(\a)\}\cap S \text{ is not
    stationary}\}.$$
\end{Def}

Note that $E^{2,\k}_S$ can be identified with the equivalence relation on
the power set of $\kappa$ in which two sets $A$ and $B$ are equivalent
if their symmetric difference restricted to $S$ is non-stationary.
This can be done by identifying a set $A\subset \k$ with its 
characteristic function.

For every regular cardinal $\mu<\kappa$, we denote $\{\alpha<\kappa
\mid cf(\alpha)=\mu\}$ by $S_\mu^\kappa$.  A set $C$ is
$\mu$-\emph{club} if it is unbounded and closed under $\mu$-limits.
For brevity, when $S=S^\k_\mu$, we will denote $E^{\l,\k}_{S^\k_\mu}$
by $E^{\l,\k}_{\mu\text{-club}}$. Note that $(f,g)\in
E^{\l,\k}_{\mu\text{-club}}$ if and only if the set $\{\alpha<\k\mid
f(\a)=g(\a)\}$ contains a $\mu$-club. 

For a Mahlo cardinal $\k$, the set $reg(\k)=\{\a<\k\mid\a\text{ a
  regular cardinal}\}$ is stationary. We will denote the equivalence
relation $E^{\l,\k}_{\reg(\k)}$ by $E^{\l,\k}_{reg}$. 

Given an equivalence relation $E$ on $X\in \{\k^\k,2^\k\}$, we can
define the $\lambda$-product relation of $E$ for any
$0<\lambda<\kappa$. The $\lambda$-product relation
$\mathbf{\Pi}_{\lambda}E$ is the relation defined on $X^\lambda\times
X^\lambda$ by $\eta\ \mathbf{\Pi}_{\lambda}E\ \xi$ if $\eta_\gamma\ E\
\xi_\gamma$ holds for every $\gamma<\lambda$, where
$\eta=(\eta_\gamma)_{\gamma<\lambda}$ and
$\xi=(\xi_\gamma)_{\gamma<\lambda}$. We endow the space $X^\lambda$,
$X\in \{\k^\k,2^\k\}$, with the box topology generated by the basic
open sets:
$$\{\mathbf{\Pi}_{\alpha<\lambda}\mathcal{O}_\alpha\mid\forall\alpha<\lambda\,(\mathcal{O}_\alpha\text{ is an open set in }X)\}.$$

One of the motivations to study Borel reducibility in generalized
Baire spaces is the connection with model theory. This connection
consists in the possibility to study the Borel reducibility of the
isomorphism relation of theories by coding structures with universe
$\k$ via elements of $\k^\k$. We may fix this coding, relative to a
given countable relational vocabulary $\mathcal{L}=\{P_n\mid
n<\omega\}$, as in the following definition.

\begin{Def}
  Fix a bijection $\pi\colon \k^{<\o}\to \k$. For every $\eta\in
  \k^\k$ define the $\mathcal{L}$-structure $\mathcal{A}_\eta$ with
  universe $\k$ as follows: For every relation $P_m$ with arity $n$,
  every tuple $(a_1,a_2,\ldots , a_n)$ in $\k^n$ satisfies
  $$(a_1,a_2,\ldots , a_n)\in P_m^{\mathcal{A}_\eta}\Longleftrightarrow \eta(\pi(m,a_1,a_2,\ldots,a_n)) \ge 1.$$
\end{Def}
When we describe a complete theory $T$ in a vocabulary
$\mathcal{L}'\subseteq \mathcal{L}$, we think of it as a complete
$\mathcal{L}$-theory extending $T\cup \{\forall \bar{x}\neg
P_n(\bar{x})\,|\,P_n\in \mathcal{L}\backslash \mathcal{L}'\}$.

\begin{Def}[The isomorphism relation] 
  \label{def:Isomorphism}
  Assume $T$ is a complete first
  order theory in a countable vocabulary. We define $\cong_T$ as the
  relation $$\{(\eta,\xi)\in \k^\kappa\times \k^\kappa\mid
  (\mathcal{A}_\eta\models T, \mathcal{A}_\xi\models T,
  \mathcal{A}_\eta\cong \mathcal{A}_\xi)\text{ or }
  (\mathcal{A}_\eta\not\models T, \mathcal{A}_\xi\not\models T)\}.$$
\end{Def}

In the second section we will study the reducibility between different
cofinalities, and in the last section we will study the reducibility
of $E^{\k,\k}_{reg}$ and $E^{2,\k}_{reg}$. Here is the list of the
main results in this article:
\begin{itemize}
\item (Theorem \ref{indescr-forcing}) \textit{Suppose $\kappa$ is a
    $\Pi_1^{\lambda^+}$-indescribable cardinal for some $\l<\k$ and
    $V=L$. Then there is a forcing extension where $\kappa$ is
    collapsed to become $\lambda^{++}$ and $E_{\lambda\text{-club}}^
    {\l^{++},\l^{++}}\ \leq_c\ E_{\lambda^+\text{-club}}^{2,\l^{++}}$.}
\item (Corollary 2.14) \textit{ The following statement is consistent
    relative to the consistency of countably many supercompact cardinals:
    $E^{\omega_2,\omega_2}_{\omega\text{-club}}\leq_c
    E^{\omega_2,\omega_2}_{\omega_1\text{-club}}$, and for every $n>2$ and
    every $0\leq k\leq n-3$,
    $E^{\omega_n,\omega_n}_{\omega_k\text{-club}}\leq_c
    E^{\omega_n,\omega_n}_{\omega_{n-1}\text{-club}}$.}
    
  This corollary follows from [\cite{JS}, Theorem 1.3] and gives a
  model (different from $L$ or the one in Theorem \ref{FHK-Thm55}) in
  which reducibility between different cofinalities holds.
\item (Theorem 3.3) \textit{Suppose $S=S_\lambda^\k$ for some regular
    cardinal $\lambda<\k$, or $S=reg(\kappa)$ and $\k$ weakly
    compact. If $\k$ has the $S$--dual diamond (Definition~\ref{S-Dual
      Diamond}), then $E^{\k,\k}_{S}\leq_c E^{2,\k}_{reg}$.}
\item (Corollary 3.5) \textit{Suppose $V=L$ and $\k$ is weakly
    compact. Then $E^{2,\k}_{reg}$ is $\Sii$-complete.}
\item (Theorem 3.6) \textit{Suppose $\k$ is a supercompact
    cardinal. There is a generic extension $V[G]$ in which
    $E_{reg}^{\k,\k}\leq_c E^{2,\k}_{reg}$ and $\k$ is still supercompact
    in the extension.}
\item (Theorem 3.7) \textit{If $\k$ is a $\Pi_2^1$-indescribable
    cardinal, then $E^{\k,\k}_{reg}$ is $\Sigma_1^1$-complete.}
\item (Corollary 3.8) \textit{Suppose $\k$ is a supercompact cardinal. 
    There is a generic extension $V[G]$ in which
    $\k$ is still supercompact and $E_{reg}^{2,\k}$ 
    is $\Sii$-complete.}
\item (Theorem 3.9) \textit{Let DLO be the theory of dense linear
    orderings without end points. If $\k$ is a
    $\Pi_2^1$-indescribable cardinal, then $\cong_{\DLO}$ is
    $\Sii$-complete.}
\end{itemize}

\section{Reducibility between different cofinalities}
In \cite{FHK} the authors studied the reducibility between the
relations $E^{2,\k}_{\mu\text{-club}}$ and showed in particular the
consistency of $E_{\lambda\text{-club}}^{2,\l^{++}}\leq_c
E_{\lambda^+\text{-club}}^{2,\l^{++}}$. In this section we continue
along these lines.
\begin{Def}\label{def:StronglyReflects}
  We say that a set $X\subset\k$ strongly reflects to a set $Y\subset
  \k$ if for all stationary $Z\subset X$ there exist stationary many
  $\alpha\in Y$ with $Z\cap\alpha$ stationary in $\alpha$.
\end{Def}
In \cite[Theorem 55]{FHK} it is proved that: If $\kappa$ is a weakly
compact cardinal, then $S_\lambda^\kappa$ strongly reflects to $reg
(\kappa)$, for any regular cardinal $\lambda<\kappa$. This result can
be generalized to $\Pi^\lambda_1$-indescribable cardinals:

\begin{Def}\label{def:PiLambda1Indescr}
  A cardinal $\kappa$ is $\Pi^\lambda_1$-indescribable if whenever
  $A\subset V_\kappa$ and $\sigma$ is a $\Pi_1$ sentence such that
  $(V_{\kappa+\lambda}, \in, A)\models \sigma$, then for some
  $\alpha<\kappa$, $(V_{\alpha+\lambda}, \in, A\cap V_{\alpha})\models
  \sigma$.
\end{Def}

Strongly unfoldable cardinals are examples of
$\Pi^\lambda_1$-indescribable cardinals.

\begin{Lemma}\label{indescr-strong-reflection}
  Suppose $\kappa$ is a $\Pi^\lambda_1$-indescribable cardinal. There
  are $\lambda$ many disjoint stationary subsets of $\kappa$, $\la
  S_\gamma\ra_{\gamma<\lambda}$, such that for every $\gamma<\lambda$,
  $S_\gamma\subseteq reg(\kappa)$ and $\kappa$ strongly reflects to
  $S_\gamma$.
\end{Lemma}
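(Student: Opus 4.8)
The plan is to route everything through the $\Pi^\lambda_1$-indescribability ideal and to extract the two features we need -- strong reflection and the existence of $\lambda$ disjoint positive pieces -- from its structure. Let $I$ be the ideal on $\kappa$ consisting of those $X\subseteq\kappa$ that can be \emph{described away}: there are a $\Pi_1$ sentence $\sigma$ and an $A\subseteq V_\kappa$ with $(V_{\kappa+\lambda},\in,A)\models\sigma$ while $(V_{\alpha+\lambda},\in,A\cap V_\alpha)\not\models\sigma$ for all $\alpha\in X$. By hypothesis $\kappa\notin I$, and, as is standard for indescribability ideals, $I$ is a proper, normal, $\kappa$-complete ideal with $\NS_\kappa\subseteq I$ (the $\kappa$-completeness uses $\kappa^{<\kappa}=\kappa$ to amalgamate fewer than $\kappa$ descriptions into one). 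I would then prove two claims: (A) every $I$-positive set is a strong reflection target for $\kappa$; and (B) there are $\lambda$ pairwise disjoint $I$-positive subsets of $\reg(\kappa)$. Since $\NS_\kappa\subseteq I$ forces positive sets to be stationary, these give the lemma.

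Claim (A) is the argument of \cite[Theorem 55]{FHK} relativised to $I$-positivity. Fix an $I$-positive $S$, a stationary $Z\subseteq\kappa$, and a club $C\subseteq\kappa$. The assertion ``$Z$ is stationary, $C$ is unbounded, and $\kappa$ is a regular cardinal'' is a $\Pi^1_1$ property of $(V_\kappa,\in,Z,C)$, hence a $\Pi_1$ assertion over $(V_{\kappa+1},\in,A)$ -- and a fortiori over $(V_{\kappa+\lambda},\in,A)$, since $\lambda\ge 1$ -- for a predicate $A$ coding $(Z,C)$; it holds at $\kappa$. By $I$-positivity of $S$ it reflects to some $\alpha\in S$, where $V_\alpha$ has height $\alpha$: thus $\alpha$ is regular, $C\cap\alpha$ is unbounded (so $\alpha\in C$), and $Z\cap\alpha$ is stationary in $\alpha$. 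Letting $C$ range over all clubs shows that $\{\alpha\in S:Z\cap\alpha\text{ is stationary}\}$ meets every club, i.e.\ $\kappa$ strongly reflects to $S$, and the reflected assertion also places $\alpha$ in $\reg(\kappa)$. Reflecting ``$\kappa$ is regular'' by itself gives $\reg(\kappa)\in I^*$.

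Claim (B) is the crux, and it is where the strength of $\Pi^\lambda_1$-indescribability (beyond mere weak compactness) is spent. Note first that simply applying Solovay's theorem to the stationary set $\reg(\kappa)$ is useless: a piece that is only stationary may be $I$-null, whereas the reflection argument in (A) genuinely needs positivity. Instead I would use that the $\Pi^\lambda_1$-indescribability filter $I^*$ is never an ultrafilter on a positive set -- a $\kappa$-complete ultrafilter extending $I^*{\restriction}A$ would be a normal measure, and a normal measure always strictly extends the indescribability filter -- so $I$ is nowhere prime and, as is known for the indescribability ideal, fails to be $\kappa^+$-saturated. This yields a family $\langle B_\gamma\rangle_{\gamma<\lambda}$ of pairwise $I$-almost-disjoint $I$-positive sets. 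Now $\kappa$-completeness performs the disjointification: for each $\gamma<\lambda$ the set $\bigcup_{\gamma'\ne\gamma}(B_\gamma\cap B_{\gamma'})$ is a union of fewer than $\kappa$ members of $I$, hence lies in $I$, so $S_\gamma:=\bigl(B_\gamma\setminus\bigcup_{\gamma'\ne\gamma}(B_\gamma\cap B_{\gamma'})\bigr)\cap\reg(\kappa)$ is still $I$-positive (using $\reg(\kappa)\in I^*$), is contained in $\reg(\kappa)$, and satisfies $S_\gamma\cap S_{\gamma'}=\emptyset$ for $\gamma\ne\gamma'$. By (A) each $S_\gamma$ is stationary and $\kappa$ strongly reflects to it.

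The main obstacle is precisely (B): one must show that the indescribability ideal is rich enough -- normal, $\kappa$-complete, non-prime, and non-$\kappa^+$-saturated -- to support $\lambda$ pairwise (almost-)disjoint positive sets, since this, rather than bare reflection, is what the hypothesis $\Pi^\lambda_1$-indescribability is buying. The remaining ingredients -- the normality and $\kappa$-completeness of $I$, and the complexity bookkeeping showing the relevant assertions are $\Pi_1$ over $V_{\kappa+\lambda}$ -- are routine.
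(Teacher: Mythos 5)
Your reduction of the lemma to two claims about the $\Pi^\lambda_1$-indescribability ideal $I$ is a reasonable reorganisation, and Claim (A) is essentially the reflection argument the paper itself uses (there phrased for the sets of $\Pi^\beta_1$-indescribable cardinals rather than for arbitrary $I$-positive sets). The gap is in Claim (B), which you rightly call the crux but do not actually prove. First, your argument that $I$ is nowhere prime is circular: if $I\restriction A$ were prime, its dual would be a normal measure \emph{equal to} the filter generated by $I^*$ and $A$, and the slogan ``a normal measure always strictly extends the indescribability filter'' --- which is itself unclear for this non-standard notion, since reflection here concerns $V_{\kappa+\lambda}$ and an ultrapower need not compute $V_{\kappa+\lambda}$ correctly --- does not contradict that; asserting that the dual filter is not an ultrafilter below $A$ \emph{is} the non-primeness you are trying to prove. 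Second, even granting non-primeness everywhere, $\kappa$-completeness does not yield $\lambda$ pairwise disjoint positive sets for uncountable $\lambda$: the transfinite splitting can die at a limit stage when the intersection of the remaining positive pieces falls into $I$ (an atomless complete Boolean algebra can be ccc, so ``atomless'' alone never gives uncountable antichains). What you actually need is a failure of saturation for $I$, and the appeal to this being ``known for the indescribability ideal'' is not legitimate: Definition 2.2 is not the classical $\Pi^1_n$-indescribability, and the known non-saturation arguments for indescribability ideals are themselves proved by exactly the kind of hierarchy construction you are trying to avoid. The entire content of the lemma has thus been pushed into an uncited black box.

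The paper fills this hole constructively: it lets $S^*_\beta$ be the set of $\Pi^\beta_1$-indescribable cardinals below $\kappa$ and takes $S_\beta=S^*_\beta\setminus S^*_{\beta+1}$ for $\beta<\lambda$; these are pairwise disjoint and contained in $\reg(\kappa)$ for free. The price is that $S_\beta$ is not shown to be $I$-positive, so strong reflection cannot be quoted from your Claim (A); instead, for a given stationary $X$ and club $C$, the paper picks the \emph{minimal} $\gamma\in C\cap S^*_\beta$ with $X\cap\gamma$ stationary and shows, by reflecting a $\Pi_1$ statement over $V_{\gamma+\beta+1}$, that such a minimal $\gamma$ cannot be $\Pi^{\beta+1}_1$-indescribable, hence lies in $S_\beta$. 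If you want to keep the ideal-theoretic framing you would have to prove that these hierarchy levels (or some other explicit family of $\lambda$ pairwise disjoint sets) are $I$-positive, or else reproduce the minimality argument; either way the concrete hierarchy seems unavoidable.
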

\begin{proof}
  Let $S^*_\beta$ denote the set of all the
  $\Pi_1^\beta$-indescribable cardinals below $\kappa$.  Since
  ``$\kappa$ is $\Pi^\beta_1$-indescribable'' is a $\Pi^1_1$ property
  of the structure $(V_{\kappa+\lambda},\in)$, the set $S^*_\beta$ is
  stationary for every $\beta<\lambda$.

  Let us show that for every stationary set $X\subseteq
  \k$, $$B=\{\alpha\in S^*_\beta \mid X\cap \alpha \mbox{ is
    stationary in } \alpha\}$$ is stationary.  Let $C$ be a club in
  $\kappa$. The sentence $$(C \mbox{ is unbounded in }\kappa)\wedge (X
  \mbox{ is stationary in } \kappa)\wedge (\kappa \mbox{ is }
  \Pi^\beta_1\mbox{-indescribable})$$ is a $\Pi^1_1$ property of the
  structure $(V_{\kappa+\lambda}, \in, X, C)$. By reflection, there is
  $\gamma<\kappa$ such that $C\cap \gamma$ is unbounded in $\gamma$,
  and hence $\gamma\in C$, $S\cap \gamma$ is stationary in $\gamma$,
  and $\gamma$ is $\Pi^\beta_1$-indescribable. We conclude that
  $C\cap B\neq \emptyset$.

  Let us denote $S_\beta^*\backslash S_{\beta+1}^*$ by $S_\beta$. Let
  us show that for every stationary set $X\subseteq \k$, $$\{\alpha\in
  S_\beta \mid X\cap \alpha \text{ is stationary in } \alpha\}$$ is
  stationary. Let $C$ be a club in $\kappa$. Since $\{\alpha\in
  S^*_\beta \mid X\cap \alpha \text{ is stationary in } \alpha\}$ is
  stationary, we can pick $\gamma\in C\cap \{\alpha\in S^*_\beta \mid
  X\cap \alpha \text{ is stationary in } \alpha\}$ such that $\gamma$
  is minimal.

  \begin{Claim}
    $\gamma$ is not $\Pi_1^{\beta+1}$-indescribable.
  \end{Claim}
  \begin{proof}
    Suppose, towards a contradiction, that $\gamma$ is
    $\Pi^{\beta+1}_1$-indescribable. The sentence
    $$(C\cap \gamma \mbox{ is unbounded in }\gamma)\wedge (X\cap \gamma \mbox{ is stationary in }\gamma)\wedge (\gamma \mbox{ is }\Pi^\beta_1\mbox{-indescribable})$$ 
    is a $\Pi^1_1$ property of the structure $(V_{\gamma+\beta+1},
    \in, X\cap\gamma, C\cap\gamma)$. By reflection, there is
    $\gamma'<\gamma$ such that $C\cap\gamma'$ is unbounded in
    $\gamma'$, $X\cap \gamma'$ is stationary in $\gamma'$, and
    $\gamma'$ is $\Pi^\beta_1$-indescribable. This contradicts the
    minimality of $\gamma$.
  \end{proof}
  We conclude that $S_\beta$ is stationary and $\{\alpha\in S_\beta
  \mid X\cap \alpha \text{ is stationary in } \alpha\}$ is stationary,
  for every $\beta<\lambda$.
\end{proof}

The notion of $\diamond$-reflection was introduced in \cite{FHK} in
order to find reductions between equivalence relations modulo
non-stationary ideals (see below).

\begin{Def}[$\diamond$-reflection]
  Let $X,Y$ be subsets of $\kappa$ and suppose $Y$ consists of
  ordinals of uncountable cofinality. We say that \textit{$X$
    $\diamond$-reflects to $Y$} if there exists a sequence $\la
  D_\alpha\ra_{\alpha\in Y}$ such that:
  \begin{itemize}
  \item $D_\alpha\subset \alpha$ is stationary in $\alpha$ for all
    $\alpha\in Y$.
  \item if $Z\subset X$ is stationary, then $\{\alpha\in Y\mid
    D_\alpha=Z\cap\alpha\}$ is stationary.
  \end{itemize}
\end{Def}
\begin{Thm}[\cite{FHK}, Theorem 59]
  Suppose $V=L$ and that $X\subseteq \kappa$ and $Y\subseteq reg
  (\kappa)$. If $X$ strongly reflects to $Y$, then $X$
  $\diamond$-reflects to $Y$.
\end{Thm}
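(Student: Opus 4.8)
The plan is to build the $\diamond$-reflection sequence $\langle D_\alpha\rangle_{\alpha\in Y}$ by the canonical Jensen construction from the global well-ordering $<_L$ of $L$, restricting attention to candidate sets that are subsets of $X$ and stationary in the relevant point, and then to run the usual condensation argument for $\diamond$ while invoking the hypothesis that $X$ strongly reflects to $Y$ at exactly the place where one needs the reflecting ordinal to land inside $Y$.

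Concretely, I would define $D_\alpha$ for $\alpha\in Y$ by recursion: let $(Z_\alpha,C_\alpha)$ be the $<_L$-least pair, if one exists, such that $Z_\alpha\subseteq X\cap\alpha$ is stationary in $\alpha$, $C_\alpha$ is club in $\alpha$, and $D_\beta\neq Z_\alpha\cap\beta$ for every $\beta\in C_\alpha\cap Y$ with $\beta<\alpha$; put $D_\alpha=Z_\alpha$ in that case, and otherwise let $D_\alpha$ be a fixed canonical stationary subset of $\alpha$ (for instance the set of limit ordinals below $\alpha$, which is stationary since $\alpha\in Y$ has uncountable cofinality). The first clause of $\diamond$-reflection, that each $D_\alpha$ is stationary in $\alpha$, is then immediate from the construction.

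For the second clause I would argue by contradiction. Suppose $Z\subseteq X$ is stationary but $\{\alpha\in Y\mid D_\alpha=Z\cap\alpha\}$ is non-stationary, and let $(Z,C)$ be the $<_L$-least pair with $Z\subseteq X$ stationary, $C\subseteq\kappa$ club, and $D_\alpha\neq Z\cap\alpha$ for all $\alpha\in C\cap Y$. Fix a large enough $L_\gamma$ containing $\kappa,X,Y,Z,C$ in which the construction of $\langle D_\alpha\rangle$ and the minimality of $(Z,C)$ are correctly expressed; the set $C^*$ of $\delta<\kappa$ that arise as $\delta=N\cap\kappa$ for the $<_L$-Skolem hull $N$ of $\delta\cup\{\kappa,X,Y,Z,C\}$ in $L_\gamma$ is a club. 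Now invoke strong reflection: since $Z\subseteq X$ is stationary, the set $R=\{\delta\in Y\mid Z\cap\delta\text{ is stationary in }\delta\}$ is stationary, so I may choose $\delta\in R\cap C^*\cap C$. For this $\delta$, condensation collapses $N$ to some $L_{\bar\gamma}$ with $N\cap\kappa=\delta$, sending $(X,Y,Z,C)$ to $(X\cap\delta,Y\cap\delta,Z\cap\delta,C\cap\delta)$; since $\delta\in C\cap C^*$ the set $C\cap\delta$ is club in $\delta$, since $\delta\in R$ the set $Z\cap\delta$ is stationary in $\delta$, and by the global counterexample hypothesis $D_\beta\neq Z\cap\beta$ for all $\beta\in C\cap\delta\cap Y$. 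Hence $(Z\cap\delta,C\cap\delta)$ is a candidate pair at stage $\delta$, and by elementarity it is the $<_L$-least such, so $D_\delta=Z\cap\delta$. This contradicts $\delta\in C\cap Y$, completing the argument.

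The main obstacle, and the only place where the hypothesis is used essentially, is guaranteeing that the reflecting ordinal $\delta$ can be taken inside $Y$ with $Z\cap\delta$ stationary in $\delta$: a plain condensation argument produces stationarily many condensation points $\delta=N\cap\kappa$, but nothing forces these into $Y$, and without $Z\cap\delta$ stationary the reflected pair would not even be an admissible candidate. Strong reflection of $X$ to $Y$ supplies precisely the stationary reservoir $R\subseteq Y$ of such $\delta$, which meets the condensation club $C^*$ and the club $C$; the remaining work is the routine bookkeeping that minimality of $(Z,C)$ transfers to minimality of $(Z\cap\delta,C\cap\delta)$ under the collapse, so that the recursion at stage $\delta$ returns exactly $Z\cap\delta$.
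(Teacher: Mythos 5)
Your proof is correct and follows essentially the same route the paper takes: the statement is quoted from [FHK] without proof here, but the paper's own proof of the closely analogous Lemma \ref{weakly-compact-dual-diamond} (explicitly described as a modification of this very argument) uses exactly your strategy --- a $<_L$-least diagonalizing candidate at each $\alpha$, a $<_L$-least global counterexample, condensation of a Skolem hull, and the reflection hypothesis to place the collapse point $\delta=N\cap\kappa$ inside $Y$ with $Z\cap\delta$ stationary. Your formulation of the diagonalization clause (only against $\beta\in C_\alpha\cap Y$) is in fact the cleaner one, and the point you isolate as the essential use of strong reflection is precisely where the paper uses it too.
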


\begin{Thm}[\cite{FHK}, Theorem 58] \label{FHK-Thm58} If $X$
  $\diamond$-reflects to $Y$, then $E^{2,\k}_X\leq_c E^{2,\k}_Y$.
\end{Thm}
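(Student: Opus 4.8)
The plan is to build the continuous reduction directly from the $\diamond$-reflection sequence $\la D_\a\ra_{\a\in Y}$. Identify each $\eta\in 2^\k$ with the set $A_\eta=\{\a<\k\mid\eta(\a)=1\}$, so that $(\eta,\xi)\in E^{2,\k}_X$ means exactly that $(A_\eta\sd A_\xi)\cap X$ is non-stationary. We may assume $D_\a\subseteq X$ for every $\a\in Y$: the guessing property only ever concerns sets $Z\subseteq X$, and at the relevant points $D_\a=Z\cap\a\subseteq X$, so replacing each $D_\a$ by $D_\a\cap X$ where the latter is stationary in $\a$ (and by an arbitrary stationary subset of $\a$ otherwise) preserves the defining property. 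Now define $f\colon 2^\k\to 2^\k$ by setting, for $\a\in Y$,
\[
 f(\eta)(\a)=1\iff D_\a\setminus A_\eta\ \text{is non-stationary in}\ \a,
\]
and $f(\eta)(\a)=0$ for $\a\notin Y$. Since $D_\a\subseteq\a$, the value $f(\eta)(\a)$ depends only on $\eta\rest\a$; hence the preimage of a basic open set is determined by a bounded part of $\eta$ and is open, so $f$ is continuous.

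For the forward implication suppose $(A_\eta\sd A_\xi)\cap X$ is stationary. Writing it as the disjoint union of $(A_\eta\setminus A_\xi)\cap X$ and $(A_\xi\setminus A_\eta)\cap X$, at least one piece is stationary; say $Z=(A_\eta\setminus A_\xi)\cap X$ is. By $\diamond$-reflection the set $T=\{\a\in Y\mid D_\a=Z\cap\a\}$ is stationary, and for $\a\in T$ we have $D_\a=Z\cap\a\subseteq A_\eta$, so $D_\a\setminus A_\eta=\es$ is non-stationary, while $D_\a\cap A_\xi=\es$ forces $D_\a\setminus A_\xi=D_\a$, which is stationary in $\a$ by the first clause of $\diamond$-reflection. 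Thus $f(\eta)(\a)=1\ne 0=f(\xi)(\a)$ for every $\a\in T$, so $(f(\eta)\sd f(\xi))\cap Y\supseteq T$ is stationary and $(f(\eta),f(\xi))\notin E^{2,\k}_Y$. The essential point here, and the main obstacle, is to guess the \emph{one-sided} difference $Z=(A_\eta\setminus A_\xi)\cap X$ rather than the full symmetric difference: this guarantees that on $D_\a$ the function $A_\eta$ is everywhere $1$ and $A_\xi$ everywhere $0$, so the stationary guess really does produce a disagreement of $f$-values, and it avoids the situation in which $D_\a$ would split into two stationary pieces meeting both $A_\eta$ and its complement (in which case no bit difference would be forced).

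For the converse suppose $(A_\eta\sd A_\xi)\cap X$ is non-stationary and fix a club $C$ disjoint from it. If $\a\in Y$ satisfies $f(\eta)(\a)\ne f(\xi)(\a)$, say $D_\a\setminus A_\eta$ is non-stationary in $\a$ while $D_\a\setminus A_\xi$ is stationary in $\a$, then $D_\a\cap(A_\eta\setminus A_\xi)$ is stationary in $\a$; since $D_\a\subseteq X$ this means $(A_\eta\sd A_\xi)\cap X$ is stationary in $\a$. But for every $\a$ in the club of limit points of $C$ lying in $C$, the set $C\cap\a$ is club in $\a$ and disjoint from $(A_\eta\sd A_\xi)\cap X$, so the latter is non-stationary in $\a$. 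Hence the set of disagreements $(f(\eta)\sd f(\xi))\cap Y$ is disjoint from a club and therefore non-stationary, giving $(f(\eta),f(\xi))\in E^{2,\k}_Y$. Combining the two implications, $f$ witnesses $E^{2,\k}_X\leq_c E^{2,\k}_Y$.
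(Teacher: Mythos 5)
Your construction is the natural one, and the paper itself offers nothing to compare it with: Theorem~\ref{FHK-Thm58} is quoted from \cite{FHK} without proof. The key idea in your write-up --- set $f(\eta)(\a)=1$ iff $D_\a\setminus A_\eta$ is non-stationary in $\a$, and in the negative direction guess the \emph{one-sided} difference $(A_\eta\setminus A_\xi)\cap X$ so that $D_\a$ lies entirely inside $A_\eta$ and entirely outside $A_\xi$ --- is exactly the right one, and your continuity check and forward direction are correct.

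There is, however, a genuine flaw in the preliminary step ``we may assume $D_\a\subseteq X$ for every $\a\in Y$.'' Your justification replaces $D_\a$ by $D_\a\cap X$ when that set is stationary in $\a$, and by ``an arbitrary stationary subset of $\a$'' otherwise; but at those ``otherwise'' points the new $D_\a$ is of course not contained in $X$, so the property you claim to have arranged fails exactly where it was in doubt. Such points cannot in general be avoided: $\diamond$-reflection only forces $X\cap\a$ to be stationary in $\a$ for \emph{stationarily many} $\a\in Y$, not for all of them, so there may be a stationary set of $\a\in Y$ below which $X$ contains no stationary subset of $\a$ at all. At such $\a$ the converse direction breaks: from $f(\eta)(\a)\ne f(\xi)(\a)$ you only get that $D_\a\cap(A_\eta\sd A_\xi)$ is stationary in $\a$, which says nothing about $(A_\eta\sd A_\xi)\cap X$. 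Indeed, if the bad $D_\a$ happen to be disjoint from $X$, then taking $A_\eta$ to be the union of those $D_\a$ and $A_\xi=\es$ gives $(A_\eta\sd A_\xi)\cap X=\es$ while $f(\eta)$ and $f(\xi)$ disagree on a stationary subset of $Y$, so the map is not a reduction. The repair is a one-liner: define $f(\eta)(\a)=1$ iff $D_\a\cap X$ is stationary in $\a$ \emph{and} $(D_\a\cap X)\setminus A_\eta$ is non-stationary in $\a$, and $f(\eta)(\a)=0$ otherwise. The forward direction is unaffected, since at the guessing points $D_\a=Z\cap\a\subseteq X$ and hence $D_\a\cap X=D_\a$ is stationary in $\a$, and the converse now goes through verbatim with $D_\a\cap X$ in place of $D_\a$. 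With that change your proof is complete.
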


$\diamond$-reflection also implies some reductions for the relations
$E_{\mu\text{-club}}^\k$ on the space $\k^\k$. To show this, we first
need to introduce some definitions.

\begin{Def}
For every $\alpha<\k$ with $\gamma<cf(\a)$ define $E_{\gamma\text{-club}}^{\k,\k}\restriction\alpha$ by:
$$E_{\gamma\text{-club}}^{\k,\k}\restriction\alpha=
\{(\eta,\xi)\in \kappa^\kappa\times\kappa^\kappa\mid \exists C\subseteq \alpha\text{ a  } \gamma\text{-club}, \forall\beta\in C, \eta(\beta)=\xi(\beta)\}.$$
\end{Def}
\begin{Prop}\label{refl-red}
  Suppose $\gamma<\lambda<\k$ are regular cardinals. If $S_\gamma^\kappa$
  strongly reflects to $S_\lambda^\k$, then
  $E_{\gamma\text{-club}}^{\k,\k}\leq_c
  E_{\lambda\text{-club}}^{\k,\k}$.
\end{Prop}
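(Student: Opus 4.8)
The plan is to build a single continuous $f\colon\kappa^\kappa\to\kappa^\kappa$ that reads the behaviour of its input below each $\alpha\in S^\kappa_\lambda$ and records, in the one value $f(\eta)(\alpha)$, the $\gamma$-club-equivalence class of $\eta\restriction\alpha$. What makes this possible is the standing hypothesis $\kappa^{<\kappa}=\kappa$, so that $|\kappa^\alpha|=\kappa$ for every $\alpha<\kappa$; hence the relation ``$s$ and $t$ agree on a $\gamma$-club of $\alpha$'' (an equivalence relation on $\kappa^\alpha$ whenever $\cf(\alpha)>\gamma$) has at most $\kappa$ classes, and for each $\alpha\in S^\kappa_\lambda$ I can fix a complete invariant $c_\alpha\colon\kappa^\alpha\to\kappa$, i.e.\ $c_\alpha(s)=c_\alpha(t)$ iff $s$ and $t$ agree on a $\gamma$-club of $\alpha$. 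I then set $f(\eta)(\alpha)=c_\alpha(\eta\restriction\alpha)$ for $\alpha\in S^\kappa_\lambda$ and $f(\eta)(\alpha)=0$ otherwise. Since $f(\eta)\restriction\alpha$ depends only on the bounded initial segment $\eta\restriction\alpha$, $f$ is continuous.

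First I would record two routine facts about $\gamma$-clubs. (a) If $C$ is a $\gamma$-club of $\kappa$, then its set of limit points $D=\{\alpha\mid\sup(C\cap\alpha)=\alpha\}$ is club, and for every $\alpha\in D$ the trace $C\cap\alpha$ is a $\gamma$-club of $\alpha$ (unboundedness is immediate and closure under $\gamma$-limits is inherited from $C$). (b) If $C'$ is a $\gamma$-club of $\alpha$ and $\cf(\alpha)>\gamma$, then $C'$ contains every one of its limit points lying in $S^\alpha_\gamma$, so $S^\alpha_\gamma\setminus C'$ is non-stationary in $\alpha$; consequently, if two functions agree on a $\gamma$-club of $\alpha$ then the set of points of $S^\alpha_\gamma$ where they differ is non-stationary in $\alpha$, while conversely if they differ on a stationary subset of $S^\alpha_\gamma$ they can agree on no $\gamma$-club of $\alpha$.

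Next I verify that $f$ is a reduction. For the forward direction, suppose $(\eta,\xi)\in E^{\kappa,\kappa}_{\gamma\text{-club}}$, witnessed by a $\gamma$-club $C$ on which $\eta$ and $\xi$ agree. Using (a), for every $\alpha$ in the club $D$ with $\alpha\in S^\kappa_\lambda$ the trace $C\cap\alpha$ is a $\gamma$-club of $\alpha$ on which $\eta\restriction\alpha$ and $\xi\restriction\alpha$ agree, so $f(\eta)(\alpha)=c_\alpha(\eta\restriction\alpha)=c_\alpha(\xi\restriction\alpha)=f(\xi)(\alpha)$; hence $\{\alpha\in S^\kappa_\lambda\mid f(\eta)(\alpha)\neq f(\xi)(\alpha)\}$ is disjoint from the club $D$ and therefore non-stationary, giving $(f(\eta),f(\xi))\in E^{\kappa,\kappa}_{\lambda\text{-club}}$. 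For the converse I argue contrapositively: if $(\eta,\xi)\notin E^{\kappa,\kappa}_{\gamma\text{-club}}$ then $Z=\{\beta\in S^\kappa_\gamma\mid\eta(\beta)\neq\xi(\beta)\}$ is stationary, and strong reflection of $S^\kappa_\gamma$ to $S^\kappa_\lambda$ yields stationarily many $\alpha\in S^\kappa_\lambda$ with $Z\cap\alpha$ stationary in $\alpha$. For such an $\alpha$ we have $\cf(\alpha)=\lambda>\gamma$, so $S^\kappa_\gamma\cap\alpha=S^\alpha_\gamma$ and $\eta\restriction\alpha$, $\xi\restriction\alpha$ differ on the stationary set $Z\cap\alpha\subseteq S^\alpha_\gamma$; by (b) they agree on no $\gamma$-club of $\alpha$, whence $f(\eta)(\alpha)\neq f(\xi)(\alpha)$. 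Thus $\{\alpha\in S^\kappa_\lambda\mid f(\eta)(\alpha)\neq f(\xi)(\alpha)\}$ is stationary and $(f(\eta),f(\xi))\notin E^{\kappa,\kappa}_{\lambda\text{-club}}$.

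The one genuinely delicate point is the existence of the invariants $c_\alpha$, which is exactly where $\kappa^{<\kappa}=\kappa$ enters; everything else is bookkeeping with clubs together with the single application of strong reflection. Accordingly I expect the main effort to be in stating (a) and (b) cleanly and in checking that the relevant ``difference'' set is trapped inside a non-stationary set in the forward direction and pinned to a stationary set in the backward direction.
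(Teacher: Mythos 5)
Your proposal is correct and follows essentially the same route as the paper: the paper's reduction also sends $\eta$ to the map $\alpha\mapsto$ (a code for the $E^{\kappa,\kappa}_{\gamma\text{-club}}\restriction\alpha$-class of $\eta$) on $S^\kappa_\lambda$ and $0$ elsewhere, with the same two-sided verification via traces of $\gamma$-clubs and strong reflection. Your write-up merely makes explicit two points the paper leaves implicit, namely the use of $\kappa^{<\kappa}=\kappa$ to get the invariants $c_\alpha$ and the continuity of the reduction.
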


\begin{proof}
  Suppose that for every stationary set $S\subset S_{\gamma}^{\kappa}$
  it holds that $\{\a\in S_\lambda^\k\mid S\cap \a\text{ is stationary
    in }\a\}$ is a stationary set, and define $F\colon \k^\k\to \k^\k$
  by
  $$F(\eta)(\a)=
  \begin{cases}
    f_\alpha(\eta), \text{ if }cf(\a)=\lambda\\
    0, \text{ otherwise}.
  \end{cases}
  $$
  where $f_\eta(\alpha)$ is a code in $\kappa\backslash \{0\}$ for the
  $(E_{\gamma\text{-club}}^{\k,\k}\restriction\alpha)$-equivalence
  class of~$\eta$.

  Let us prove that if $(\eta,\xi)\in E^{\k}_{\gamma\text{-club}}$,
  then $(F(\eta),F(\xi))\in E^{\k,\k}_{\lambda\text{-club}}$.  Suppose
  $(\eta,\xi)\in E^{\k}_{\gamma\text{-club}}$. There is a
  $\gamma$-club where $\eta$ and $\xi$ coincide and so there is a
  club $C$ such that for all $\a\in C\cap S^{\k}_{\lambda}$ the
  functions $\eta$ and $\xi$ are
  $(E^{\k}_{\gamma\text{-club}}\rest_\a)$-equivalent. Thus, by the
  definition of $F$, for all $\a\in C\cap S^{\k}_{\lambda}$,
  $F(\eta)(\a)=F(\xi)(\a)$. We conclude that $(F(\eta),F(\xi))\in
  E^{\k,\k}_{\lambda\text{-club}}$.

  Let us prove that if $(\eta,\xi)\notin E^{\k}_{\gamma\text{-club}}$,
  then $(F(\eta),F(\xi))\notin E^{\k,\k}_{\lambda\text{-club}}$. Suppose
  that $(\eta,\xi)\notin E^{\k}_{\gamma\text{-club}}$.  Then there is
  a stationary $S\subset S^{\k}_\gamma$ on which $\eta(\a)\ne\xi(\a)$.
  Since $A=\{\a\in S_\lambda^\k\mid S\cap \a\text{ is stationary in
  }\a\}$ is a stationary and for all $\a\in A$, $f_\a(\eta)\neq
  f_\a(\xi)$, we conclude that $(F(\eta),F(\xi))\notin
  E^{\k,\k}_{\lambda\text{-club}}$.
\end{proof}

\begin{Cor}
  Suppose $\gamma<\lambda<\k$ are regular cardinals. If $S_\gamma^\kappa$
  $\diamond$-reflects to $S_\lambda^\kappa$, then
  \begin{enumerate}
  \item $E_{\gamma\text{-club}}^{2,\k}\leq_c E_{\lambda\text{-club}}^{2,\k}$.
  \item $E_{\gamma\text{-club}}^{\k,\k}\leq_c E_{\lambda\text{-club}}^{\k,\k}$.
  \end{enumerate}
\end{Cor}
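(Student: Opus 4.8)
The plan is to derive each of the two reductions from a result already available in the excerpt, so that almost no new work is needed. For part (i) I would apply Theorem~\ref{FHK-Thm58} directly, and for part (ii) I would first extract strong reflection from $\diamond$-reflection and then invoke Proposition~\ref{refl-red}.

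For part (i), note that by our conventions $E^{2,\k}_{\gamma\text{-club}}$ is $E^{2,\k}_{S^\k_\gamma}$ and $E^{2,\k}_{\lambda\text{-club}}$ is $E^{2,\k}_{S^\k_\lambda}$. Since $\gamma<\lambda<\k$ are regular cardinals, both $S^\k_\gamma$ and $S^\k_\lambda$ are stationary, and as $\lambda>\gamma\ge\o$ the set $S^\k_\lambda$ consists of ordinals of uncountable cofinality, so the hypothesis of $\diamond$-reflection is meaningful. The assumption is exactly that $S^\k_\gamma$ $\diamond$-reflects to $S^\k_\lambda$, so Theorem~\ref{FHK-Thm58} with $X=S^\k_\gamma$ and $Y=S^\k_\lambda$ gives $E^{2,\k}_{S^\k_\gamma}\le_c E^{2,\k}_{S^\k_\lambda}$, which is part (i).

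For part (ii), the key observation is that $\diamond$-reflection implies strong reflection. Fix a witnessing sequence $\la D_\a\ra_{\a\in S^\k_\lambda}$, and let $Z\subseteq S^\k_\gamma$ be stationary. By $\diamond$-reflection the set $\{\a\in S^\k_\lambda\mid D_\a=Z\cap\a\}$ is stationary; and for each $\a$ in this set we have $Z\cap\a=D_\a$, which is stationary in $\a$ by the first clause of the definition. Hence $\{\a\in S^\k_\lambda\mid Z\cap\a\text{ is stationary in }\a\}$ is stationary, which is precisely what strong reflection of $S^\k_\gamma$ to $S^\k_\lambda$ asserts. Proposition~\ref{refl-red} then yields $E^{\k,\k}_{\gamma\text{-club}}\le_c E^{\k,\k}_{\lambda\text{-club}}$, giving part (ii).

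There is no serious obstacle here, since both reductions are packaged in earlier results; the only point requiring care is the implication from $\diamond$-reflection to strong reflection, where one must remember to use that each $D_\a$ is itself stationary in $\a$ (and not merely a subset of $\a$), so that the equality $Z\cap\a=D_\a$ transfers stationarity. Once this is noted, the Corollary is immediate.
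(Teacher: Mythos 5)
Your proposal is correct and follows the same route as the paper: part (i) is exactly the application of Theorem~\ref{FHK-Thm58}, and part (ii) derives the strong-reflection hypothesis of Proposition~\ref{refl-red} from the $\diamond$-sequence just as the paper does (you merely spell out the step, which the paper states in one line, that $Z\cap\a=D_\a$ inherits stationarity in $\a$ from the first clause of the definition). No gaps.
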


\begin{proof}
  \begin{enumerate}
  \item Follows from Theorem \ref{FHK-Thm58}.
  \item By the definition of $\diamond$-reflection, $S_\gamma^\kappa$
    $\diamond$-reflecting to $S_\lambda^\kappa$ implies that for all
    $S\subseteq S_\gamma^\kappa$ the set $\{\a\in S_\lambda^\k\mid
    S\cap \a\text{ is stationary in }\a\}$ is a stationary set. The
    result follows from Proposition \ref{refl-red}.
  \end{enumerate}
\end{proof}

In \cite{FHK}, the consistency of $S_\lambda^{\lambda^{++}}$
$\diamond$-reflecting to $S_{\lambda^+}^{\lambda^{++}}$ was
shown. This gives a model in which $E_{\lambda\text{-club}}^{2,\k}\leq_c
E_{\lambda^+\text{-club}}^{2,\k}$ and
$E_{\lambda\text{-club}}^{\lambda^{++}}\leq_c
E_{\lambda^+\text{-club}}^{\lambda^{++}}$.

\begin{Thm}[\cite{FHK}, Theorem 55]\label{[FHK]-Theorem-55} 
  Suppose that $\kappa$ is a weakly compact cardinal and $V=L$. Then:
  \begin{enumerate}
  \item $E_{\lambda\text{-club}}^{2,\k}\leq_c E_{reg}^{2,\k}$ holds for all
    regular $\lambda<\kappa$.
  \item For every regular $\lambda<\kappa$ there is a forcing
    extension where $\kappa$ is collapsed to become $\lambda^{++}$ and
    $E_{\lambda\text{-club}}^{2,\l^{++}}\leq_c E_{\lambda^+\text{-club}}^{2,\l^{++}}$.
  \end{enumerate}
\end{Thm}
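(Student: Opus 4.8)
The plan is to reduce everything to $\diamond$-reflection and then feed it into Theorem~\ref{FHK-Thm58}; part~(i) is a short chain of the results already assembled, while part~(ii) transports that chain across a Lévy collapse.

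For part~(i), note first that a weakly compact cardinal is exactly a $\Pi^1_1$-indescribable cardinal, which in the sense of Definition~\ref{def:PiLambda1Indescr} is the case $\l=1$. Lemma~\ref{indescr-strong-reflection} then supplies a stationary set $S_0\subseteq\reg(\k)$, consisting of uncountable regular cardinals, such that $\k$ strongly reflects to $S_0$; restricting to stationary subsets of $S^\k_\l$ gives that $S^\k_\l$ strongly reflects to $S_0$. Since $V=L$ and $S_0$ consists of ordinals of uncountable cofinality, Theorem~59 of \cite{FHK} upgrades this to: $S^\k_\l$ $\diamond$-reflects to $S_0$. By Theorem~\ref{FHK-Thm58} we get $E^{2,\k}_{\l\text{-club}}=E^{2,\k}_{S^\k_\l}\leq_c E^{2,\k}_{S_0}$. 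Finally, as $S_0\subseteq\reg(\k)$, the continuous map sending $\eta$ to the function that agrees with $\eta$ on $S_0$ and vanishes off $S_0$ witnesses $E^{2,\k}_{S_0}\leq_c E^{2,\k}_{reg}$; composing the two reductions proves $E^{2,\k}_{\l\text{-club}}\leq_c E^{2,\k}_{reg}$.

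For part~(ii), let $\k$ be weakly compact in $L$, fix a regular $\l<\k$, and Lévy-collapse with $\mathbb{P}=\mathrm{Coll}(\l^+,<\k)$. Since $\mathbb{P}$ is ${<}\l^+$-closed and, as $\k$ is inaccessible, has the $\k$-c.c., in $V[G]$ the cardinal $\l^+$ is preserved, every cardinal in $(\l^+,\k)$ is collapsed to $\l^+$, and $\k$ becomes $\l^{++}$. Two facts drive the transfer. First, ${<}\l^+$-closure adds no sequence of length ${\le}\l$, so cofinality $\l$ is absolute between $L$ and $V[G]$ and $S^\k_\l=S^{\l^{++}}_\l$ remains stationary. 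Secondly, each former regular cardinal $\mu\in(\l^+,\k)$ acquires cofinality $\l^+$ in $V[G]$, because $\mathrm{Coll}(\l^+,<\mu)$ maps $\l^+$ cofinally into $\mu$; thus $\reg(\k)\cap(\l^+,\k)$ is absorbed into $S^{\l^{++}}_{\l^+}$. It therefore suffices to prove that in $V[G]$ the set $S^{\l^{++}}_\l$ $\diamond$-reflects to $S^{\l^{++}}_{\l^+}$, since Theorem~\ref{FHK-Thm58} then delivers $E^{2,\l^{++}}_{\l\text{-club}}\leq_c E^{2,\l^{++}}_{\l^+\text{-club}}$, which is exactly part~(ii).

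I would build the guessing sequence $\la D_\alpha\ra$ inside $V[G]$, indexed by the former inaccessibles $\alpha\in\reg(\k)\cap(\l^+,\k)\subseteq S^{\l^{++}}_{\l^+}$, defining $D_\alpha\subseteq S^\alpha_\l$ by reading off a candidate initial segment in the intermediate model $V[G_\alpha]$ (with $G_\alpha=G\cap\mathrm{Coll}(\l^+,<\alpha)$) through a fixed $\Diam_\alpha$-sequence of $L$. The first clause of $\diamond$-reflection is then routine, since ${<}\l^+$-closure preserves stationarity of subsets of $S^\alpha_\l$. The main obstacle is the guessing clause: an arbitrary $V[G]$-stationary $Z\subseteq S^{\l^{++}}_\l$ is typically a set newly added by the collapse, and one must still make $\{\alpha\in S^{\l^{++}}_{\l^+}\mid D_\alpha=Z\cap\alpha\}$ stationary. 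I expect to settle this by combining two ingredients. A name-capturing step, using the $\k$-c.c. together with ${<}\l^+$-closure, shows that for names $\dot Z,\dot C$ there is a club of former inaccessibles $\alpha$ at which $\dot Z$ and $\dot C$ are decided below $\alpha$, so that $Z\cap\alpha,C\cap\alpha\in V[G_\alpha]$ with $Z\cap\alpha$ stationary in $\alpha$. A reflection step, powered by the weak compactness of $\k$ (lifted into $V[G]$ via a master-condition extension of an $L$-embedding, in the spirit of Lemma~\ref{indescr-strong-reflection}), then yields stationarily many such reflecting $\alpha$ at which the locally defined $D_\alpha$ actually equals the captured $Z\cap\alpha$. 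Matching the locally guessed $D_\alpha$ with $Z\cap\alpha$ at stationarily many $\alpha$ is the technical heart of the proof and the step I expect to absorb most of the work; the remainder is the bookkeeping recorded above together with the single application of Theorem~\ref{FHK-Thm58}.
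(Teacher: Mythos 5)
Part (i) of your proposal is correct and is essentially the argument the paper relies on: weak compactness gives strong reflection of $S^\k_\l$ to a stationary set of regulars, $V=L$ upgrades this to $\diamond$-reflection via \cite[Theorem 59]{FHK}, and Theorem~\ref{FHK-Thm58} plus the trivial ``pad by a constant off $S_0$'' reduction finishes. (The paper itself only cites this statement from \cite{FHK}, but its proof of the generalization, Theorem~\ref{indescr-forcing}, follows exactly this template.)

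Part (ii) correctly identifies the strategy --- collapse with $\mathrm{Coll}(\l^+,<\k)$, prove that $S^{\l^{++}}_\l$ $\diamond$-reflects to (a stationary subset of) $S^{\l^{++}}_{\l^+}$ in $V[G]$, and apply Theorem~\ref{FHK-Thm58} --- but the step you yourself flag as ``the technical heart'' is genuinely missing, and the mechanism you sketch for it would not work as described. You propose to define $D_\alpha$ by ``reading off a candidate initial segment in $V[G_\alpha]$ through a fixed $\Diam_\alpha$-sequence of $L$.'' A ground-model (or $L$-) diamond sequence cannot guess an arbitrary stationary $Z\subseteq S^{\l^{++}}_\l$ of $V[G]$: the traces $Z\cap\alpha$ live in $V[G_\alpha]$ and vary with the generic, and no sequence fixed in $V$ indexed by $\alpha$ can anticipate them; the name-capturing step only tells you $Z\cap\alpha\in V[G_\alpha]$, not that your predetermined $D_\alpha$ hits it. The paper's proof (Claim 2.11.1 inside Theorem~\ref{indescr-forcing}) resolves exactly this point differently: $D_\mu$ is \emph{not} fixed in advance but is chosen by the generic itself, via a bijection $h_\mu$ between $\mu^+$ and the nice $\mathbb{P}_\mu$-names for subsets of $\mu$ and the value $[(\cup G)(\mu^+)](0)$ of the generic at coordinate $\mu^+$. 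The guessing clause then becomes a density argument: given a name $\dot S$ forced stationary and a ground-model club $C_0$, one writes ``$r\Vdash\dot S$ is stationary'' as a $\Pi^1_1$ property of $(V_\k,\in,R)$ and uses $\Pi^1_1$-indescribability to reflect it to some $\mu\in C_0$ with $r\in\mathbb{P}_\mu$, after which the condition extending $r$ by $q(\mu^+)(0)=h_\mu^{-1}(\dot S\cap V_\mu)$ forces $D_\mu=\dot S\cap\mu$. Your alternative of lifting a weakly compact embedding with a master condition is a plausible substitute for the reflection step, but you have not set it up, and in any case it cannot rescue the guessing clause for $D_\alpha$'s that are fixed independently of $G$. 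As written, part (ii) is an outline with its central lemma unproved.
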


The proof of this theorem can be generalised using
Lemma~\ref{indescr-strong-reflection} to show the consistency
of $E_{\l\text{-club}}^{\l^{++},\l^{++}}\ \leq_c\
E_{\l^+\text{-club}}^{2,\l^{++}}$:

\begin{Thm}\label{indescr-forcing}
  Suppose $\kappa$ is a $\Pi_1^{\lambda^+}$-indescribable cardinal
  and that $V=L$. Then there is a forcing extension where $\kappa$ is
  collapsed to become $\lambda^{++}$ and $E_{\lambda\text{-club}}^
  {\lambda^{++}}\ \leq_c\ E_{\lambda^+\text{-club}}^ {2}.$
\end{Thm}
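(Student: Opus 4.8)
The plan is to follow the proof of Theorem~\ref{[FHK]-Theorem-55}(ii) and to insert into it a passage from the Baire space $(\l^{++})^{\l^{++}}$ to the Cantor space $2^{\l^{++}}$; it is this passage that forces the use of $\Pi_1^{\l^+}$-indescribability rather than mere weak compactness, because encoding a value ${<}\l^{++}$ in binary needs $\l^+$ bits. Since $\k$ is $\Pi_1^{\l^+}$-indescribable, Lemma~\ref{indescr-strong-reflection} applied with $\l^+$ in place of $\l$ gives a sequence $\la S_\g\ra_{\g<\l^+}$ of pairwise disjoint stationary sets, each contained in $\reg(\k)$, such that $\k$ strongly reflects to every $S_\g$. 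I would then collapse $\k$ to $\l^{++}$ with the forcing of Theorem~\ref{[FHK]-Theorem-55}(ii), e.g.\ the L\'evy collapse $\mathrm{Coll}(\l^+,<\k)$, which is ${<}\l^+$-closed and $\k$-c.c.; in the extension $V[G]$ cofinality $\l$ is preserved, every regular cardinal of $V$ in $(\l^+,\k)$ acquires cofinality $\l^+$, and hence each $S_\g$ becomes a stationary subset of $S^{\l^{++}}_{\l^+}$, the $S_\g$ remaining pairwise disjoint.

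In $V[G]$ I would realise the reduction as a composition of four continuous maps witnessing
$$E^{\l^{++},\l^{++}}_{\l\text{-club}} \,\leq_c\, E^{\l^{++},\l^{++}}_{\l^+\text{-club}} \,\leq_c\, \mathbf{\Pi}_{\l^+}E^{2,\l^{++}}_{\l^+\text{-club}} \,\leq_c\, \mathbf{\Pi}_{\g<\l^+}E^{2,\l^{++}}_{S_\g} \,\leq_c\, E^{2,\l^{++}}_{\l^+\text{-club}}.$$
The first $\leq_c$ is Proposition~\ref{refl-red} with $(\l,\l^+)$ in place of $(\g,\l)$: because $\k$ strongly reflects to $S_\g\subseteq S^{\l^{++}}_{\l^+}$, the set $S^{\l^{++}}_\l$ strongly reflects to $S^{\l^{++}}_{\l^+}$, so $\l$-club agreement is turned into agreement, modulo $\NS\rest S^{\l^{++}}_{\l^+}$, of the codes $f_\a(\cdot)$ of the $(E^{\l^{++},\l^{++}}_{\l\text{-club}}\rest\a)$-classes. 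The second $\leq_c$ bit-decomposes this $\l^{++}$-valued function: fixing an injection $c\colon\l^{++}\to 2^{\l^+}$, a value $v$ is sent to the sequence of its bits $\la c(v)(\d)\ra_{\d<\l^+}$, landing in the box-topology product $(2^{\l^{++}})^{\l^+}$. The third $\leq_c$ applies, coordinate by coordinate, the Cantor-space reduction of Theorem~\ref{FHK-Thm58} provided by $S^{\l^{++}}_{\l^+}$ $\diamond$-reflecting to $S_\g$ — available since $\k$ strongly reflects to $S_\g\subseteq\reg(\k)$ and, under $V=L$, strong reflection to a set of regular cardinals upgrades to $\diamond$-reflection (\cite{FHK}) — so that the $\g$-th coordinate is henceforth compared modulo $\NS\rest S_\g$. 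The last $\leq_c$ flattens the product by placing the $\g$-th coordinate on $S_\g$, that is $R(\la\zeta_\g\ra_\g)(\a)=\zeta_{\g(\a)}(\a)$ for $\a$ in the unique $S_{\g(\a)}$ containing it, and $0$ elsewhere.

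Both the bit-decomposition and the flattening rely on the $\k$-completeness of the nonstationary ideal. Two values differ iff some bit differs, so the difference set of the original functions is the union of the $\l^+<\l^{++}=\k$ many coordinatewise difference sets, and a union of $\l^+$ nonstationary sets is nonstationary; the identical union-of-$\l^+$-nonstationary-sets computation handles the flattening, whose difference set is $\bigcup_{\g<\l^+}\{\a\in S_\g:\zeta_\g(\a)\ne\zeta'_\g(\a)\}$. Because the flattened output is supported on $\bigcup_\g S_\g\subseteq S^{\l^{++}}_{\l^+}$, equivalence of two outputs modulo $\NS\rest S^{\l^{++}}_{\l^+}$ coincides with equivalence modulo $\NS\rest\bigcup_\g S_\g$, so the composite lands in $E^{2,\l^{++}}_{\l^+\text{-club}}$ and yields $E^{\l^{++},\l^{++}}_{\l\text{-club}}\leq_c E^{2,\l^{++}}_{\l^+\text{-club}}$.

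The real work, as in Theorem~\ref{[FHK]-Theorem-55}, is the forcing bookkeeping rather than the combinatorics above: I must verify that in $V[G]$, and for all stationary sets (including those newly added by the collapse), $S^{\l^{++}}_\l$ still strongly reflects to $S^{\l^{++}}_{\l^+}$ and $S^{\l^{++}}_{\l^+}$ still $\diamond$-reflects to each $S_\g$. The $\diamond$-reflection in the extension is the delicate point, since the collapse simultaneously adds new stationary subsets of $S^{\l^{++}}_{\l^+}$ and alters the cofinality structure defining it; establishing it means exploiting the ${<}\l^+$-closure and $\k$-chain condition of the collapse to argue that the ground-model guessing sequences keep guessing, or re-deriving $\diamond$-reflection outright in $V[G]$ from the reflection of the $S_\g$ together with $\diamondsuit_{\l^{++}}$. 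I would also confirm the routine points: that Proposition~\ref{refl-red} applies with target $S^{\l^{++}}_{\l^+}$ (whose points have cofinality $\l^+>\l$, so $E^{\l^{++},\l^{++}}_{\l\text{-club}}\rest\a$ is defined), and that all four maps are continuous for the bounded topology on $(\l^{++})^{\l^{++}}$ and the box topology on $(2^{\l^{++}})^{\l^+}$.
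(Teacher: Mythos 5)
There is a genuine gap at the third link of your chain. You want to apply Theorem~\ref{FHK-Thm58} coordinatewise using ``$S^{\l^{++}}_{\l^+}$ $\diamond$-reflects to $S_\g$'' in $V[G]$, where $S_\g\subseteq S^{\l^{++}}_{\l^+}$. This is not merely delicate; it is vacuously false. In $V[G]$ every $\a\in S_\g$ has cofinality $\l^+$, and if $\cf(\a)=\l^+$ then $\a$ has a club of order type $\l^+$ whose limit points all have cofinality $\le\l$; hence $S^{\l^{++}}_{\l^+}\cap\a$ is non-stationary in $\a$, so no stationary $Z\subseteq S^{\l^{++}}_{\l^+}$ satisfies ``$Z\cap\a$ stationary in $\a$'' at any $\a\in S_\g$, and the set $\{\a\in S_\g\mid D_\a=Z\cap\a\}$ (with $D_\a$ required to be stationary in $\a$) is empty. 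A cofinality never reflects to itself, so no choice of guessing sequence, and no amount of care with the closure and chain condition of the collapse, can rescue this step. The problem is created by your first step: applying Proposition~\ref{refl-red} up front moves the comparison from $\NS\rest S^{\l^{++}}_{\l}$ to $\NS\rest S^{\l^{++}}_{\l^+}$, after which the differences live on points of cofinality $\l^+$ and can no longer be reflected down onto the sets $S_\g$.

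The paper's proof keeps the comparison at cofinality $\l$ until the very end: it proves $E^{\k,\k}_{\l\text{-club}}\leq_c \mathbf{\Pi}_{\l^+}E^{2,\k}_{\l\text{-club}}$ (your bit-decomposition, but with $\l$-club, not $\l^+$-club, agreement in each coordinate), and then flattens using the $\diamond$-reflection of $S^{\l^{++}}_{\l}$ --- not of $S^{\l^{++}}_{\l^+}$ --- to the disjoint sets $S_\g\subseteq S^{\l^{++}}_{\l^+}$, which is the consistent direction of reflection (stationary sets of small cofinality reflecting to points of larger cofinality). Establishing that this $\diamond$-reflection survives into $V[G]$, including for stationary sets added by the collapse, is the real content of the proof (Claim 2.11.1): it is done by a density argument with nice $\mathbb{P}_\mu$-names, using property (v) of the L\'evy collapse to reduce to ground-model clubs and the $\Pi_1^{\b}$-indescribability of $\k$ to reflect ``$\dot S$ is stationary'' to a suitable $\mu\in S_\b$. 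Your closing paragraph correctly identifies that the forcing bookkeeping is the hard part, but the object you propose to preserve is the wrong one; you should instead verify preservation of the $\diamond$-reflection of $S^{\l^{++}}_{\l}$ to each $S_\g$, and drop the initial application of Proposition~\ref{refl-red} altogether.
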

\begin{proof}
  Let us collapse $\kappa$ to $\lambda^{++}$ with the Levy
  collapse $$\mathbb{P}=\{f:reg(\kappa)\rightarrow\kappa^{<\lambda^+}\,\mid\,
  rang(f(\mu))\subset \mu,\arrowvert\{\mu\mid
  f(\mu)\neq\emptyset\}\arrowvert\leq\lambda\}$$ where $f\geq g$ if
  and only if $f(\mu)\subseteq g(\mu)$ for all $\mu\in
  reg(\kappa)$. Let us define $\mathbb{P}_\mu$ and $\mathbb{P}^\mu$
  for all $\mu$ by: $\mathbb{P}_\mu=\{f\in\mathbb{P}\mid
  sprt(f)\subset \mu\}$ and $\mathbb{P}^\mu=\{f\in\mathbb{P}\mid
  sprt(f)\subset \kappa\backslash \mu\}$. It is known that all regular
  $\lambda<\mu\leq \kappa$ satisfy:
  \begin{enumerate}
\item if $\mu>\lambda^+$, then $\mathbb{P}_\mu$ has the $\mu$-c.c.,
\item $\mathbb{P}_\mu$ and $\mathbb{P}^\mu$ are ${<}\lambda^+$-closed,
\item $\mathbb{P}=\mathbb{P}_\kappa\Vdash \lambda^{++}=\check{\kappa}$,
\item if $\mu<\kappa$, then $\mathbb{P}\Vdash
  cf(\check{\mu})=\lambda^+$,
\item if $p\in\mathbb{P}$, $\sigma$ a name, and $p\Vdash\sigma\mbox{
    ``is a club in }\lambda^{++}$'', then there is a club $E\subset
  \kappa$ such that $p\Vdash\check{E}\subset \sigma$.
\end{enumerate}
  \begin{Claim}
    There is a sequence $\la S_\gamma\ra_{\gamma<\lambda^+}$ of
    disjoint stationary subsets of $S^{\lambda^{++}}_{\lambda^+}$ in
    $V[G]$ such that $S_{\lambda}^{\lambda^{++}}$ $\diamond$-reflects
    to $S_\gamma$ for every $\gamma<\lambda^+$.
  \end{Claim}
  \begin{proof}
    Let $G$ be a $\mathbb{P}$-generic over $V$, and define
    $G_\mu=G\cap \mathbb{P}_\mu$ and $G^\mu=G\cap \mathbb{P}^\mu$. So
    $G_\mu$ is $\mathbb{P}_\mu$-generic over $V$, $G^\mu$ is
    $\mathbb{P}^\mu$-generic over $V[G_\mu]$, and
    $V[G]=V[G_\mu][G^\mu]$.  Let $S^*_\beta$ denote the set of all the
    $\Pi_1^\beta$-indescribable cardinals below $\kappa$ and
    $S_\beta=S^*_\beta\backslash S^*_{\beta+1}$. We will show that
    $S_\lambda^{\lambda^{++}}$ $\diamond$-reflects to $\{\mu\in
    V[G]\mid \mu\in S_\beta^V\}$ for all $\beta<\lambda^+$. Let us fix
    $\beta<\lambda^+$ and denote by $Y$ the set $\{\mu\in V[G]\mid
    \mu\in S_\beta^V\}$. By Lemma \ref{indescr-strong-reflection} we
    know that $S_\beta^V$ is stationary and by (v), it remains
    stationary in $V[G]$. By (i) we know that there are no antichains
    of length $\mu$ in $\mathbb{P}_\mu$, and since
    $\arrowvert\mathbb{P}_\mu\arrowvert=\mu$ we conclude that there
    are at most $\mu$ antichains. On the other hand, there are $\mu^+$
    many subsets of $\mu$. Hence, there is a
    bijection $$h_\mu:\mu^+\rightarrow\{\sigma\mid \sigma \mbox{ is a
      nice }\mathbb{P}_\mu\mbox{ name for a subset of }\mu\}$$ for
    each $\mu\in reg(\kappa)$ such that $\mu>\lambda^{+}$, where a nice
    $\mathbb{P}_\mu$ name for a subset of $\check{\mu}$ is of the form
    $\bigcup\{\{\check{\alpha}\}\times A_\alpha\mid\alpha\in B\}$ with
    $B\subset \check{\mu}$ and $A_\alpha$ an antichain in
    $\mathbb{P}_\mu$. Notice that the nice $\mathbb{P}_\mu$ names for
    subsets of $\check{\mu}$ are subsets of $V_\mu$. Let us
    define $$D_\mu=
  \begin{cases} 
    [h_\mu([(\cup G)(\mu^+)](0))]_G & \mbox{ if this set is stationary }\\
    \mu & \mbox{otherwise. }
  \end{cases}$$ We will show that $\la D_\mu\ra_{\mu\in Y}$ is the
  needed $\diamond$-sequence in $V[G]$.

  Suppose, towards a contradiction, that there are a stationary set
  $S\subset S_\lambda^{\lambda^{++}}$ and a club $C\subset
  \lambda^{++}$ (in $V[G]$) such that for all $\alpha\in C\cap Y$,
  $D_\alpha\neq S\cap \alpha$. By (v) there is a club $C_0\subset C$
  such that $C_0\in V$. Let $\dot{S}$ be a nice name for $S$ and $p$ a
  condition such that $p$ forces that $\dot{S}$ is stationary. We will
  show that $$H=\{q<p\mid q\Vdash D_\mu=\dot{S}\cap\check{\mu}\mbox{
    for some }\mu\in C_0\}$$ is dense below $p$, which is a
  contradiction. Let us redefine $\mathbb{P}$. Let
  $\mathbb{P}^*=\{q\,\mid\, \exists r\in\mathbb{P}\,(r\restriction
  sprt (r)=q)\}$. Clearly $\mathbb{P}\cong\mathbb{P}^*$, $\mathbb{P}^*
  \subseteq V_\kappa$, and $\mathbb{P}_\mu^*=\mathbb{P}^*\cap V_\mu$,
  where $\mathbb{P}_\mu^*=\{q\,\mid\, \exists r\in\mathbb P_\mu\, (r
  \restriction sprt(r) = q)\}$. It can be verified that the properties
  mentioned above also hold for $\mathbb{P}_\mu^*$. From now on denote
  $\mathbb{P}_\mu^*$ by $\mathbb{P}_\mu$. Let $r$ be a condition
  stronger than $p$
  and $$R=(\mathbb{P}\times\{0\})\cup(\dot{S}\times\{1\})\cup(C_0\times
  \{2\}\cup(\{r\}\times \{3\})).$$ Let $\forall A\varphi$ be the
  formula:

  \textit{If $A$ is closed and unbounded and $t<r$ are arbitrary, then
    there exists $q<r$ and $\a\in A$ such that
    $q\Vdash_\mathbb{P}\check{\a}\in \dot{S}$}.

  Clearly, $\forall A\varphi$ says $r\Vdash (\dot{S}\mbox{ is
    stationary})$. By (v) it is enough to quantify over club sets in
  $V$. Notice that $t<r$, $q<t$, $A$ is a club, and $\alpha\in A$ are
  first order expressible using $R$ as a parameter. The definition of
  $\check{\a}$ is recursive in
  $\alpha$: $$\check{\a}=\{(\check{\gamma},1_\mathbb{P})\mid
  \gamma<\a\}$$ and it is absolute for $V_\kappa$. Then
  $q\Vdash_\mathbb{P}\check{\a}\in \dot{S}$ is equivalent to saying
  that for each $q'<q$ there exists $q''<q'$ with $(\check{\a},q'')\in
  \dot{S}$, and this is first order expressible using $R$ as a
  parameter. Therefore $\forall A\varphi$ is a $\Pi^1_1$ property of
  the structure $(V_\kappa,\in,R)$, even more $$(\forall
  A\varphi)\wedge (\kappa \mbox{ is
  }\Pi^\beta_1\mbox{-indescribable})$$ is a $\Pi^1_1$ property of the
  structure $(V_{\kappa+\lambda}, \in, R)$. By reflection, there is
  $\mu<\kappa$ $\Pi^\beta_1$-indescribable, such that $\mu\in C_0$, $r\in \mathbb{P}_\mu$, and
  $(V_\mu,\in,R)\models \forall A\varphi$. In the same way as in Claim 2.3.1, we can show that there is there is
  $\mu<\kappa$ $\Pi^\beta_1$-indescribable that is not $\Pi^{\beta+1}_1$-indescribable, i.e. $(\check{\mu}_G\in
  Y)^{V[G]}$, such that $\mu\in C_0$, $r\in \mathbb{P}_\mu$, and
  $(V_\mu,\in,R)\models \forall A\varphi$.
  Notice that $\alpha\in
  S\cap\mu$ implies that $(\check{\a},\check{q})\in \dot{S}$ for some
  $q\in \mathbb{P}_\mu$. Let $\dot{S}_\mu=\dot{S}\cap V_\mu$, thus
  $r\Vdash_{\mathbb{P}_\mu} (\dot{S}_\mu\mbox{ is stationary})$. Let
  us define $q$ as follows: $dom(q)=dom(r)\cup\{\mu^+\}$,
  $q\restriction \mu=r\restriction\mu$ and $q(\mu^+)=f$,
  $dom(f)=\{0\}$, and $f(0)=h_\mu^{-1}(\dot{S}_\mu)$. Since
  $\mathbb{P}^\mu$ is ${<}\lambda^+$-closed and does not kill
  stationary subsets of $S_\lambda^{\lambda^{++}}$,
  $(\dot{S}_\mu)_{G_\mu}$ is stationary in $V[G]$, and by the way we
  chose $\mu$, $(\dot{S}_\mu)_{G_\mu}=(\dot{S}_\mu)_{G}$. Therefore
  $q\Vdash_\mathbb{P}(\dot{S}_\mu\mbox{ is stationary})$, and by the
  definition of $D_\mu$ (in $V[G]$) we conclude that
  $q\Vdash_\mathbb{P}\dot{S}_\mu=D_\mu$. Finally, by the way we chose
  $\mu$, we get that $(\dot{S}_\mu)_{G}=S\cap\mu$. We conclude that
  $H$ is dense below $p$, a contradiction.
\end{proof}
From now on in this proof, we will work in $V[G]$. In particular,
$\kappa$ will be $\lambda^{++}$.
\begin{Claim}
  $E_{\lambda\text{-club}}^{\k,\k}\ \leq_c \mathbf{\Pi}_{\lambda^+}\
  E_{\lambda\text{-club}}^{2,\k}$.
\end{Claim}
\begin{proof}
  Let $H$ be a bijection from $\kappa$ to $2^{\lambda^+}$. Define
  $\mathcal{F}:\kappa^\kappa\rightarrow(2^\kappa)^{\lambda^+}$ by
  $\mathcal{F}(f)=(f_\gamma)_{\gamma<\lambda^+}$, where
  $f_\gamma(\alpha)=H(f(\alpha))(\gamma)$ for every $\gamma<\lambda^+$
  and $\alpha<\kappa$. Let us show that $\mathcal{F}$ is a reduction
  of $E^{\k,\k}_{\lambda\text{-club}}$ to $\mathbf{\Pi}_{\lambda^+}\
  E^{2,\k}_{\lambda\text{-club}}$.
  
  Clearly $f(\alpha)=g(\alpha)$ implies $H(f(\alpha))=H(g(\alpha))$
  and $f_\gamma(\alpha)=g_\gamma(\alpha)$ for every
  $\gamma<\lambda^+$. Therefore, $f\ E_{\lambda\text{-club}}^{\k,\k}\
  g$ implies that for all $\gamma<\lambda^+$, $f_\gamma\
  E_{\lambda\text{-club}}^{2,\k}\ g_\gamma$ holds. So $f\
  \mathbf{\Pi}_{\lambda^+}\ E_{\lambda\text{-club}}^{2,\k}\ g$.
  
  Suppose that for every $\gamma<\lambda^+$ there is $C_\gamma$, a
  $\lambda$-club, such that $f_\gamma(\alpha)=g_\gamma(\alpha)$ holds
  for every $\alpha\in C_\gamma$. Since the intersection of less than
  $\kappa$ $\lambda$-club sets is a $\lambda$-club set, there is a
  $\lambda$-club $C$ on which the functions $f_\gamma$ and $g_\gamma$
  coincide for every $\gamma<\lambda^+$. Therefore
  $H(f(\alpha))(\gamma)=H(g(\alpha))(\gamma)$ holds for every
  $\gamma<\lambda^+$ and every $\alpha\in C$, so
  $H(f(\alpha))=H(g(\alpha))$ for every $\alpha\in C$. Since $H$ is a
  bijection, we can conclude that $f(\alpha)=g(\alpha)$ for every
  $\alpha\in C$, and hence $f\ E_{\lambda\text{-club}}^{\k,\k}\ g$.
\end{proof}
By Claim 2.11.1, there is a sequence $\la
S_\gamma\ra_{\gamma<\lambda^+}$ of disjoint stationary subsets of
$S^{\kappa}_{\lambda^+}$ such that $S_\lambda^\kappa$
$\diamond$-reflects to $S_\gamma$ for all $\gamma<\lambda^+$. Let $\la
D_\alpha^\gamma\ra_{\alpha\in S_\gamma}$ be a sequence that witnesses
that $S_\lambda^\kappa$ $\diamond$-reflects to $S_\gamma$.

For every $\eta\in \kappa^\kappa$ define $F(\eta)$ by:
$$F(\eta)(\alpha)=
\begin{cases}
  1 &\mbox{if there is  }\gamma<\lambda^+ \mbox{ with }\alpha\in S_\gamma \mbox{ and } \F(\eta)_\gamma^{-1}[1]\cap D_\alpha^\gamma\mbox{ stationary in }\alpha\\
  0 & \mbox{otherwise }
\end{cases}
$$
where $(\F(\eta)_\gamma)_{\gamma<\lambda^+}=\mathcal{F}(\eta)$
and where $\mathcal{F}$ is the reduction given by Claim 2.11.2.

Suppose $\eta$, $\xi$ are not
$E^{\k,\k}_{\lambda\text{-club}}$-equivalent. By Claim 2.11.2 there
exists $\gamma<\lambda^+$ such that
$\F(\eta)_\gamma^{-1}[1]\Delta\F(\xi)_\gamma^{-1}[1]$ is
stationary. Therefore, either $\F(\eta)_\gamma^{-1}[1]\backslash
\F(\xi)_\gamma^{-1}[1]$ or $\F(\xi)_\gamma^{-1}[1]\backslash
\F(\eta)_\gamma^{-1}[1]$ is stationary. Without loss of generality, let us
assume that $\F(\eta)_\gamma^{-1}[1]\backslash \F(\xi)_\gamma^{-1}[1]$ is
stationary. Since $S^\kappa_\lambda$ $\diamond$-reflects to
$S_\gamma$, $A=\{\alpha\in S_\gamma\mid (\F(\eta)_\gamma^{-1}[1]\backslash
\F(\xi)_\gamma^{-1}[1])\cap \alpha=D_\alpha^\gamma\}$ is stationary and
$D_\alpha^\gamma$ is stationary in $\alpha$, and therefore $A\subseteq
F(\eta)^{-1}[1]$. On the other hand, for every $\alpha$ in $A$ we have
$\F(\xi)_\gamma^{-1}[1]\cap D_\alpha^\gamma=\emptyset$, so $A\cap
F(\xi)^{-1}[1]=\emptyset$ and we conclude that $A\subseteq
F(\eta)^{-1}[1]\Delta F(\xi)^{-1}[1]$. Therefore
$F(\eta)^{-1}[1]\Delta F(\xi)^{-1}[1]$ is stationary, and $F(\eta)$
and $F(\xi)$ are not $E_{\lambda^+\text{-club}}^ {2}$-equivalent.

Suppose $F(\eta)$ and $F(\xi)$ are not $E_{\lambda^+\text{-club}}^
{2}$-equivalent, so $F(\eta)^{-1}[1]\Delta F(\xi)^{-1}[1]$ is
stationary. Since $\lambda^+<\kappa$, by Fodor's lemma we know that
there exists $\gamma<\lambda^+$ such that $\{\alpha\in
S_\gamma\,\mid\, F(\eta)(\alpha)\neq F(\xi)(\alpha)\}$ is
stationary. Hence, the symmetric difference of $\{\alpha\in S_\gamma\mid \F(\eta)_\gamma^{-1}[1]\cap D_\alpha^\gamma\mbox{ is stationary in }\alpha\}$ and $\{\alpha\in S_\gamma\mid \F(\xi)_\gamma^{-1}[1]\cap D_\alpha^\gamma\mbox{ is stationary in }\alpha\}$ is stationary. For simplicity, let us denote by $A_\eta$ and $A_\xi$
the sets involved in this symmetric difference
(i.e. $A_\eta=\{\alpha\in S_\gamma\mid \F(\eta)_\gamma^{-1}[1]\cap
D_\alpha^\gamma\mbox{ is stationary in }\alpha\}$ and
$A_\xi=\{\alpha\in S_\gamma\mid \F(\xi)_\gamma^{-1}[1]\cap
D_\alpha^\gamma\mbox{ is stationary in }\alpha\}$). Therefore, either
$A_\eta\backslash A_\xi$ or $A_\xi\backslash A_\eta$ is
stationary. Without loss of generality we can assume that
$A_\eta\backslash A_\xi$ is stationary. Hence, $\bigcup_{\alpha\in
  A_\eta\backslash A_\xi}(\F(\eta)_\gamma^{-1}[1]\cap
D_\alpha^\gamma)\backslash \F(\xi)_\gamma^{-1}[1]$ is stationary and is
contained in $\F(\eta)_\gamma^{-1}[1]\Delta\F(\xi)_\gamma^{-1}[1]$. By Claim
2.11.2 we conclude that $\eta$ and $\xi$ are not
$E^{\k,\k}_{\lambda\text{-cub}}$-equivalent.
\end{proof}
Notice that Theorem \ref{indescr-forcing} implies the consistency
of $$E_{\lambda\text{-club}}^ {2}\ \leq_c\ E_{\lambda\text{-club}}^
{\lambda^{++}}\ \leq_c\ E_{\lambda^+\text{-club}}^ {2}\ \leq_c\
E_{\lambda^+\text{-club}}^ {\lambda^{++}}.$$ In particular, for
$\lambda=\omega$ we get the expression $E_{\o\text{-club}}^ {2}\
\leq_c\ E_{\o\text{-club}}^ {\o_2}\ \leq_c\ E_{\o_1\text{-club}}^ {2}\
\leq_c\ E_{\o_1\text{-club}}^ {\o_2}$.
\begin{Question}
  Is it consistent that $$E_{\gamma\text{-club}}^ 2\ \lneq_c\
  E_{\gamma\text{-club}}^ {\k}\ \lneq_c\ E_{\lambda\text{-club}}^
  {2}$$ holds for all $\gamma,\lambda<\k$ and $\gamma<\lambda$?
\end{Question}
We will finish this section by showing that the reduction
$E^{\omega_2}_{\omega\text{-club}}\leq_c
E^{\omega_2}_{\omega_1\text{-club}}$ can be obtained using other
reflection principles. Specifically, full reflection implies this
reduction.  For stationary subsets $S$ and $A$ of $\k$, we say that
$S$ \emph{reflects fully} in $A$ if the set $\{\a\in A\mid S\cap \a\text{ is
  non-stationary in }\a\}$ is non-stationary.  Notice that if $S\subset
S_{\gamma}^{\kappa}$ reflects fully in $S_{\lambda}^{\kappa}$, then
the set $\{\a\in S_\lambda^\k\mid S\cap \a\text{ is stationary in
}\a\}$ is a stationary set.
\begin{Thm}[\cite{JS}, Theorem 1.3]\label{JS-thm-1.3}
  Let $\kappa_2<\kappa_3<\cdots<\kappa_n<\cdots$ be a sequence of
  supercompact cardinals. There is a generic extension $V[G]$ in which
  $\kappa_n=\aleph_n$ for all $n\ge 2$ and such that:
  \begin{enumerate}
  \item Every stationary set $S\subset S_{\omega}^{\omega_2}$ reflects
    fully in $S_{\omega_1}^{\omega_2}$.
  \item For every $2< n$ and every $0\leq k\leq n-3$, every stationary
    set $S\subset S_{\omega_k}^{\omega_n}$ reflects fully in
    $S_{\omega_{n-1}}^{\omega_n}$.
  \end{enumerate}
\end{Thm}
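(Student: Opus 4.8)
The statement is quoted from \cite{JS}, so I only sketch the strategy one would follow to establish it; the idea is to realise the $\aleph_n$'s as Levy collapses of the supercompact cardinals and to read off full reflection at each level from the supercompactness of the corresponding $\k_n$.

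First I would fix the forcing. Setting $\k_1=\o_1$, let $\P$ be the (Easton/full-support) product of the collapses $\operatorname{Coll}(\k_{n-1},{<}\k_n)$ for $n\ge 2$, so that $\P\forces\k_n=\aleph_n$ for every $n\ge 2$. The two facts I would isolate are the factorisation and the closure/chain-condition estimates: for each $n$, $\P$ factors as $\P_{<\k_n}\times\P_{\ge\k_n}$, where $\P_{<\k_n}$ has size ${<}\k_n$ (hence is $\k_n$-c.c.) and the tail $\P_{\ge\k_n}$ is ${<}\k_{n-1}$-closed. These give that cardinals and cofinalities below $\k_n$ are computed correctly and that $\P_{\ge\k_n}$ adds no new sequences of length ${<}\k_{n-1}$; in particular a ${<}\k_{n-1}$-closed forcing preserves stationary subsets of $\{\a<\k_n\mid\cf(\a)<\k_{n-1}\}$, a class that contains the cofinalities $\o_k$ relevant to the theorem (the precise gap $k\le n-3$ being pinned down by the finer analysis below).

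Now fix a level $n$ and a stationary $S\subseteq S^{\o_n}_{\o_k}$ in $V[G]$, and write $T=\{\a\in S^{\o_n}_{\o_{n-1}}\mid S\cap\a\text{ is nonstationary in }\a\}$ for the would-be counterexample to full reflection. Using that $\k_n$ is supercompact in $V$, take an embedding $j\colon V\to M$ with $\operatorname{crit}(j)=\k_n$ and $M$ closed under sequences of length up to a cardinal above $2^{\k_n}$. Because $\P_{<\k_n}$ is small (below the critical point) and the tail of $j(\P)$ is, from $M$'s point of view, the highly closed and homogeneous collapse $\operatorname{Coll}(\k_{n-1},{<}j(\k_n))$ on $[\k_n,j(\k_n))$, I would transfer $j$ to the extension by the standard master-condition/term argument, obtaining $j\colon V[G]\to M[G^*]$ with $j[G]\subseteq G^*$ and $M[G^*]\subseteq V[G]$ (equivalently, one works directly with a normal fine measure on $\mathcal{P}_{\k_n}(2^{\k_n})$ and the induced reflection). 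The key local computation is then the cofinality of $\k_n$ in $M[G^*]$: since $\operatorname{Coll}(\k_{n-1},{<}j(\k_n))$ sends every ordinal of $[\k_n,j(\k_n))$ to cardinality $\k_{n-1}$, one gets $\cf^{M[G^*]}(\k_n)=\k_{n-1}=\o_{n-1}$, so $\k_n\in j(S^{\o_n}_{\o_{n-1}})$. Moreover $S\subseteq\k_n=\operatorname{crit}(j)$ gives $j(S)\cap\k_n=S$, and as $M[G^*]\subseteq V[G]$ every club of $\k_n$ in $M[G^*]$ is a club of $\k_n$ in $V[G]$; hence $S$ is still stationary in $\k_n$ inside $M[G^*]$, i.e. $j(S)$ reflects at $\k_n$ and so $\k_n\notin j(T)$. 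To pass from this single reflection point to \emph{full} reflection I would rerun the argument relativised to an arbitrary club $C$ (replacing $S$ by $S\cap C$), thereby producing in $V[G]$ a club of $\k_n$ disjoint from $T$ — equivalently, absorbing the enforcement of full reflection into $\P$ as a club-shooting through $S^{\o_n}_{\o_{n-1}}\setminus T$ at each level, with the supercompactness guaranteeing that this shooting preserves the stationarity of the sets $S$ under consideration. Either way $T$ is nonstationary, which is full reflection of $S$ in $S^{\o_n}_{\o_{n-1}}$; clause (i) is the special case $k=0$, $\o_{n-1}=\o_1$.

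The genuinely hard part is exactly this upgrade from stationary reflection to \emph{full} reflection: a single supercompactness embedding only yields that $S$ reflects at stationarily many points (that $S^{\o_n}_{\o_{n-1}}\setminus T\ne\es$ below any prescribed club), whereas the theorem demands that the non-reflecting set $T$ be outright nonstationary. Achieving this uniformly — simultaneously at every level $n$, with the cofinality gap $k\le n-3$ dictated by how much closure survives in the tail of the collapse together with the value $\cf^{M[G^*]}(\k_n)=\o_{n-1}$ — is where the real work lies, and is why the construction requires the full $\o$-sequence of supercompacts rather than a single one. The remaining ingredients (the factorisation of $\P$, the transfer/master-condition construction of $G^*$, and preservation of stationary sets through ${<}\k_{n-1}$-closed forcing) are routine once set up, but the bookkeeping that keeps all levels mutually consistent, and the verification that one obtains full rather than merely stationary reflection, are the delicate points; for the precise argument one should consult \cite{JS}.
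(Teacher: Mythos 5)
The paper offers no proof of this statement: it is imported verbatim as \cite[Theorem 1.3]{JS} and used only as a black box to derive Corollary 2.14, so there is nothing in the paper to compare your argument against. Judged on its own, your sketch is a fair outline of the Jech--Shelah strategy and, importantly, you put your finger on exactly the right crux: a single lifted supercompactness embedding $j\colon V[G]\to M[G^*]$ with $\cf^{M[G^*]}(\k_n)=\o_{n-1}$ only shows that the non-reflecting set $T$ misses one point of $j(S^{\o_n}_{\o_{n-1}})$ below any prescribed club, i.e.\ that $S$ reflects at stationarily many $\a$, not that $T$ is nonstationary. Your proposed fix --- interleaving club-shooting through $S^{\o_n}_{\o_{n-1}}\setminus T$ into the forcing and using supercompactness to see that the shooting preserves the stationarity of the sets $S$ one cares about --- is indeed how the actual construction goes, though be aware that in \cite{JS} this is organised as an iteration in which the collapses and the club-shooting posets are interleaved level by level (with the bookkeeping handling all prospective counterexamples $T$), rather than a plain product of collapses followed by a single after-the-fact club-shooting; the cofinality gap $k\le n-3$ in clause (ii) comes out of precisely this preservation analysis. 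Since you explicitly defer the detailed verification to \cite{JS}, which is what the paper itself does, I see no gap to object to, only the caveat that your ``product'' formulation of $\P$ would need to be replaced by the iteration of \cite{JS} for the preservation arguments to go through as stated.
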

In the generic extension of \ref{JS-thm-1.3} it holds that $\o_i^{<\o_i}=\o_i$ for all $i<\o$ (see [\cite{JS}, Theorem 1.3]).
\begin{Cor}
  The following statement is consistent:
  $E^{\omega_2}_{\omega\text{-club}}\leq_c
  E^{\omega_2}_{\omega_1\text{-club}}$, and for every $2< n$ and every
  $0\leq k\leq n-3$, $E^{\omega_n}_{\omega_k\text{-club}}\leq_c
  E^{\omega_n}_{\omega_{n-1}\text{-club}}$.
\end{Cor}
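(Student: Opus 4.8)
The plan is to verify the hypotheses of Proposition~\ref{refl-red} inside the generic extension $V[G]$ provided by Theorem~\ref{JS-thm-1.3}, and then to read off each of the required reductions as an instance of that proposition. First I would fix the model $V[G]$ of Theorem~\ref{JS-thm-1.3}, in which $\kappa_n=\aleph_n$ for all $n\ge 2$, the two full-reflection statements (i) and (ii) hold, and $\omega_i^{<\omega_i}=\omega_i$ for every $i<\omega$. The last of these guarantees that the standing hypothesis $\kappa^{<\kappa}=\kappa$ is met for each $\kappa=\omega_n$, which is exactly what is needed for the coding used inside the proof of Proposition~\ref{refl-red}, where one relies on there being at most $\kappa$ many $(E_{\gamma\text{-club}}^{\k,\k}\restriction\alpha)$-classes for each $\alpha<\kappa$.

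Next I would convert full reflection into the form required by Proposition~\ref{refl-red}, namely the strong reflection of Definition~\ref{def:StronglyReflects}. By the remark preceding the corollary, whenever a stationary $S\subset S_\gamma^\kappa$ reflects fully in $S_\lambda^\kappa$, the set $\{\alpha\in S_\lambda^\kappa\mid S\cap\alpha\text{ is stationary in }\alpha\}$ is stationary. Applying this to \emph{all} stationary $S\subset S_\gamma^\kappa$ shows precisely that $S_\gamma^\kappa$ strongly reflects to $S_\lambda^\kappa$ in the sense of Definition~\ref{def:StronglyReflects}. Thus statement (i) of Theorem~\ref{JS-thm-1.3} yields that $S_\omega^{\omega_2}$ strongly reflects to $S_{\omega_1}^{\omega_2}$, while statement (ii) yields, for each $2<n$ and each $0\le k\le n-3$, that $S_{\omega_k}^{\omega_n}$ strongly reflects to $S_{\omega_{n-1}}^{\omega_n}$.

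Finally I would invoke Proposition~\ref{refl-red} in each case. For the first reduction, take $\gamma=\omega$, $\lambda=\omega_1$, $\kappa=\omega_2$; the required chain $\omega<\omega_1<\omega_2$ of regular cardinals holds, so the proposition gives $E_{\omega\text{-club}}^{\omega_2,\omega_2}\leq_c E_{\omega_1\text{-club}}^{\omega_2,\omega_2}$, i.e.\ $E^{\omega_2}_{\omega\text{-club}}\leq_c E^{\omega_2}_{\omega_1\text{-club}}$. For the second family, fix $2<n$ and $0\le k\le n-3$ and take $\gamma=\omega_k$, $\lambda=\omega_{n-1}$, $\kappa=\omega_n$; since $k\le n-3$ forces $k<n-1<n$, these are regular cardinals in the required order, and the proposition delivers $E^{\omega_n}_{\omega_k\text{-club}}\leq_c E^{\omega_n}_{\omega_{n-1}\text{-club}}$.

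I do not expect a genuine obstacle here: the corollary is a straightforward composition of the cited results, and the only points demanding attention are bookkeeping ones — checking that $\kappa^{<\kappa}=\kappa$ survives in $V[G]$ (guaranteed by $\omega_i^{<\omega_i}=\omega_i$), and confirming that the cofinality indices line up so that the strict inequalities $\gamma<\lambda<\kappa$ of regular cardinals are satisfied in every application of Proposition~\ref{refl-red}.
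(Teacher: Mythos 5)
Your proposal is correct and is exactly the argument the paper intends (the paper leaves the corollary without an explicit proof, but the preceding remark that full reflection of every stationary $S\subset S_\gamma^\kappa$ in $S_\lambda^\kappa$ yields strong reflection, combined with Proposition~\ref{refl-red}, is precisely the intended route). Your additional bookkeeping observations --- that $\omega_i^{<\omega_i}=\omega_i$ holds in $V[G]$ and that the cardinals $\gamma<\lambda<\kappa$ are regular in each instance --- are accurate and match the paper's remarks.
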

In \cite{JS} it was also proved that Theorem \ref{JS-thm-1.3} (ii) is
optimal, in the sense that it cannot be improved to include the case
$k=n-2$ \cite[Proposition 1.6]{JS}. The best possible reduction we can
get using only full reflection is the one in Corollary 2.14. By a
$\Sii$-completeness result, it is known that the following is
consistent: $$\forall k<n-1\ (E^{\omega_n}_{\omega_k\text{-club}}\
\leq_c\ E^{\omega_n}_{\omega_{n-1}\text{-club}}),$$ see Theorem 3.1
below.
\section{$\Sii$-completeness}
An equivalence relation $E$ on $X\in \{\k^\k,2^\k\}$ is \emph{$\Sii$}
if $E$ is the projection of a closed set in $X^2\times \k^\k$ and it
is \emph{$\Sii$-complete} if every $\Sii$ equivalence relation is
Borel reducible to it. The study of $\Sii$ and $\Sii$-complete
equivalence relations is an important area of generalised descriptive
set theory, because e.g.\ the isomorphism relation on classes of
models is always~$\Sii$. The same holds, in fact, in classical
descriptive set theory, but the behaviour of $\Sii$ complete relations
there is different.  For example, in the classical setting ($\k=\o$)
the isomorphism relation is never $\Sii$-complete, while in
generalised descriptive set theory this is often the case (see for
example \cite{HK,FHK}).

\begin{Thm}[\cite{HK}, Theorem 7]\label{thm:E_muSiiComplete}
  Suppose $V=L$ and $\kappa>\omega$. Then
  $E^{\k,\k}_{\mu\text{-club}}$ is $\Sii$-complete for every regular
  $\mu<\k$.
\end{Thm}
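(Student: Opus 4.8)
The statement has two parts: that $E^{\k,\k}_{\mu\text{-club}}$ is $\Sii$, and that every $\Sii$ equivalence relation Borel-reduces to it. The first part is routine. Note that $(\eta,\xi)\in E^{\k,\k}_{\mu\text{-club}}$ holds exactly when $\{\a<\k\mid \eta(\a)\ne\xi(\a)\}\cap S^\k_\mu$ is non-stationary, i.e.\ iff there is a club $C\subseteq\k$ disjoint from it. Coding such a $C$ by its increasing continuous enumeration $e\in\k^\k$ (so that unboundedness is automatic, while continuity and monotonicity are closed requirements on $e$), the set of triples $(\eta,\xi,e)$ for which $e$ enumerates a club and $\eta(\a)=\xi(\a)$ for every $\a\in\ran(e)\cap S^\k_\mu$ is closed in $(\k^\k)^3$; since $E^{\k,\k}_{\mu\text{-club}}$ is its projection onto the first two coordinates, it is $\Sii$.

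For completeness, fix an arbitrary $\Sii$ equivalence relation $E$ on $\k^\k$. The plan is to produce a continuous reduction $F\colon\k^\k\to\k^\k$ with $(\eta,\xi)\in E\iff(F(\eta),F(\xi))\in E^{\k,\k}_{\mu\text{-club}}$. Since $E$ is $\Sii$, I would first fix a tree $T$ of height $\k$, with nodes at level $\b$ triples in $\k^\b\times\k^\b\times\k^\b$, such that $(\eta,\xi)\in E$ iff the section tree $T_{\eta,\xi}=\{s\mid(\eta\rest\b,\xi\rest\b,s)\in T\text{ for }\b=\mathrm{lh}(s)\}$ has a branch of length $\k$. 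The task then becomes to convert ``existence of a cofinal branch through $T_{\eta,\xi}$'' into ``$F(\eta)$ and $F(\xi)$ agree on a $\mu$-club''.

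The combinatorial engine is $\Diamond_\k(S^\k_\mu)$, valid in $L$: a sequence $\la d_\a\ra_{\a\in S^\k_\mu}$ with $d_\a\subseteq\a$ guessing every subset of $\k$ correctly on a stationary set of $\a\in S^\k_\mu$. I would define $F(\eta)$ by recursion on $\a<\k$, putting $F(\eta)(\a)=0$ off $S^\k_\mu$ and, at $\a\in S^\k_\mu$, reading $d_\a$ as a guess for a triple $(\eta\rest\a,\xi\rest\a,b)$ consisting of two approximations together with a candidate branch $b$ of length $\a$ through $T$; one lets $F(\eta)(\a)$ be an ordinal code---here is where the target space $\k^\k$ rather than $2^\k$ is exploited---recording the extent to which $b$ is a legitimate branch compatible with the guessed approximation to $\eta$, and symmetrically for $\xi$. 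The design goal is that a genuine cofinal branch for $(\eta,\xi)$ makes the diamond guesses correct on a stationary set of $\a$ which, after closing under $\mu$-limits, contains a $\mu$-club on which $F(\eta)(\a)=F(\xi)(\a)$; whereas if no branch exists, every candidate $b$ must fail cofinally often, producing stationarily many $\a\in S^\k_\mu$ with $F(\eta)(\a)\ne F(\xi)(\a)$.

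The main obstacle is precisely this balancing act: the coding must simultaneously be continuous in $\eta$, convert the $\Sii$ existential witness into \emph{$\mu$-club} agreement rather than merely stationary agreement, and respect that $E$ is an equivalence relation, so that the codes $F(\eta)$ are coherent enough for the induced agreement to be symmetric and transitive. Fixing the exact ordinal codes and verifying the two stationary-set computations---$\mu$-club agreement when a branch exists and stationary disagreement otherwise---is the technical heart, and is where $V=L$ (through $\Diamond_\k(S^\k_\mu)$ and the reflection it provides) is indispensable.
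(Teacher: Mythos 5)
First, a point of orientation: the paper does not prove this statement at all --- it is quoted verbatim from [HK, Theorem 7] and used as a black box (e.g.\ in Corollary \ref{cor:E^2_reg}), so there is no in-paper proof to compare against. Your first paragraph (that $E^{\kappa,\kappa}_{\mu\text{-club}}$ is $\Sigma^1_1$, via coding a separating club by its increasing continuous enumeration) is fine. The completeness half, however, has a genuine gap at its core, beyond the honest admission that the ``balancing act'' is not carried out.

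The flaw is in the positive direction. You propose that $F(\eta)(\alpha)$ be computed from a diamond guess $d_\alpha$, and that when $(\eta,\xi)\in E$ a genuine branch makes the guesses correct ``on a stationary set of $\alpha$ which, after closing under $\mu$-limits, contains a $\mu$-club.'' That inference is false: a stationary subset of $S^\kappa_\mu$, even one closed under $\mu$-limits, need not contain a $\mu$-club, since its complement in $S^\kappa_\mu$ can also be stationary. $\Diamond_\kappa(S^\kappa_\mu)$ only ever delivers \emph{stationary} correctness, so it cannot by itself produce the $\mu$-club agreement required when $\eta\mathrel{E}\xi$. In the standard argument (visible in this paper's Theorem \ref{3.8}, which runs the same scheme with $\Pi^1_2$-indescribability in place of $V=L$), the two directions use different mechanisms: one sets $F(\eta)(\alpha)$ to be a code for the $E_\alpha$-equivalence class of $\eta\restriction\alpha$ (and $0$ when $E_\alpha$ is not an equivalence relation), so that a genuine witness $\theta$ for $\eta\mathrel{E}\xi$ forces $(\eta\restriction\alpha,\xi\restriction\alpha,\theta\restriction\alpha)$ to land in the restricted closed set for \emph{club}-many $\alpha$, giving agreement on a club outright; guessing/reflection is needed only in the negative direction, to find stationarily many $\alpha\in S^\kappa_\mu$ at which $E_\alpha$ is an equivalence relation and $(\eta\restriction\alpha,\xi\restriction\alpha)\notin E_\alpha$. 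A further problem with your set-up is that $F(\eta)$ must depend on $\eta$ alone, yet your guessed object is a triple involving $\xi\restriction\alpha$; this can be repaired (guess a candidate partner and branch), but as written the reduction is not well defined. Finally, the negative direction is exactly where $V=L$ does real work --- ``no cofinal branch through $T_{\eta,\xi}$'' is a $\Pi^1_1$ statement whose reflection to stationarily many $\alpha\in S^\kappa_\mu$ fails for general $\kappa$, and must instead be extracted from condensation and a $<_L$-least-counterexample argument (compare the proof of Lemma \ref{weakly-compact-dual-diamond}); this, the technical heart, is precisely the step your sketch defers.
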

We know that $E^{\k,\k}_{\lambda\text{-cub}}\restriction\alpha$ is an
equivalence relation for every $\alpha<\kappa$ with
$cf(\alpha)>\lambda$. Let us define the following
relation: $$(\eta,\xi)\in
E_{reg}^{\k,\k}\restriction\alpha\Leftrightarrow\{\beta\in
reg(\alpha)\mid \eta(\beta)\neq \xi(\beta)\}\text{ is not
  stationary}.$$ It is easy to see that
$E_{reg}^{\k,\k}\restriction\alpha$ is an equivalence relation for
every Mahlo cardinal $\alpha<\kappa$.

\begin{Def}[$S$--dual diamond]\label{S-Dual Diamond}
  Suppose $S\subseteq\kappa$ is a stationary set. We say that
  \emph{$\k$ has the $S$--dual diamond} if: There is a sequence $\la
  f_\alpha\ra_{\alpha<\kappa}$ such that
  \begin{itemize}
  \item $f_\alpha:\alpha\rightarrow \alpha$ for all $\a$,
  \item if $(Z,g)$ is a pair such that $Z\subset S$ is stationary and
    $g\in \kappa^\kappa$, then the set $$\{\alpha\in reg(\k)\mid
    g\restriction \alpha=f_\alpha\wedge Z\cap\alpha\mbox{ is
      stationary}\}$$ is stationary.
  \end{itemize}
\end{Def}
It is clear that if $S'\supseteq S$, then the $S'$--dual diamond
implies the $S$--dual diamond.  Notice that the $S$--dual diamond has a
set version that is equivalent to it:

Suppose $S\subseteq\kappa$ is a stationary set. We say that \emph{$\k$
  has the set version $S$--dual diamond} if: There is a sequence $\la
A_\alpha\ra_{\alpha<\kappa}$ such that
\begin{itemize}
\item $A_\alpha\subseteq \alpha$ for all $\a$,
\item if $(Z, X)$ is a pair such that $Z\subset S$ is stationary and
  $X\subseteq\kappa$, then the set $$\{\alpha\in reg(\k)\mid X\cap
  \alpha=A_\alpha\wedge Z\cap\alpha\mbox{ is stationary}\}$$ is
  stationary.
\end{itemize}

It is clear, using characteristic functions, that the existence of an
$S$--dual diamond sequence in the sense of Definition \ref{S-Dual
  Diamond} implies this set version of $S$--dual diamond. For the other
implication, it is easy to check that if $\langle A_\a\rangle_{\a<
  \k}$ witnesses the set version of $D$--dual diamond, $<^\ast$ is the
canonical well order of $\k\times\k$ and $f:\k\rightarrow\k\times\k$
is the corresponding order-isomorphism, then
$B_\a=\{f(\beta)\mid\beta\in A_\a\}$ is such that: if $(Z, X)$ is a
pair such that $Z\subset S$ is stationary and $X\subseteq\k\times\k$,
then the set $$\{\alpha\in reg(\k)\mid X\cap \a\times\a=B_\a\wedge
Z\cap\a\mbox{ is stationary}\}$$ is stationary. Since every $g\in
\kappa^\kappa$ is a subset of $\k\times\k$, the sequence $\la
f_\alpha\ra_{\alpha<\kappa}$ can be constructed from the sequence $\la
B_\alpha\ra_{\alpha<\kappa}$.
\begin{Thm}\label{3.4}
  Suppose $S=S_\lambda^\k$ for some $\lambda$ regular cardinal, or
  $S=reg(\kappa)$ and $\k$ is a weakly compact cardinal. If $\k$ has
  the $S$--dual diamond, then $E^{\k,\k}_{S}\leq_c E^{2,\k}_{reg}$.
\end{Thm}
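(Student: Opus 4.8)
The plan is to build a single continuous reduction $F\colon \k^\k\to 2^\k$ directly from the $S$--dual diamond sequence $\la f_\alpha\ra_{\alpha<\k}$, storing at each regular $\alpha$ one bit that records whether $\eta\restriction\alpha$ ``looks like'' $f_\alpha$ on $S$ modulo the non-stationary ideal of $\alpha$. Concretely I would set
\[
F(\eta)(\alpha)=
\begin{cases}
1 & \text{if }\alpha\in reg(\k)\text{ and }\{\b\in S\cap\alpha\mid \eta(\b)\neq f_\alpha(\b)\}\text{ is non-stationary in }\alpha,\\
0 & \text{otherwise.}
\end{cases}
\]
Equivalently, $F(\eta)(\alpha)=1$ iff $\alpha\in reg(\k)$ and $\eta\restriction\alpha$ is $(E^{\k,\k}_S\restriction\alpha)$--equivalent to $f_\alpha$. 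Since $F(\eta)(\alpha)$ depends only on $\eta\restriction\alpha$ and on the fixed parameter $f_\alpha$, the map $F$ is continuous: $F(\eta)\restriction\g$ is determined by $\eta\restriction\g$, so preimages of basic open sets are open. The crucial design choice is to record a \emph{non-stationarity of disagreement} with $f_\alpha$ rather than to guess $\eta\restriction\alpha$ exactly; this is what makes $F$ insensitive to changes of $\eta$ on a set meeting $S$ non-stationarily, which is exactly the robustness the forward direction needs (a naive exact-guessing bit would fail to be preserved even under a single-point change inside $S$).

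For the forward direction suppose $(\eta,\xi)\in E^{\k,\k}_S$, so $D\cap S$ is non-stationary, where $D=\{\b<\k\mid\eta(\b)\neq\xi(\b)\}$; fix a club $C_0$ disjoint from $D\cap S$. For every $\alpha\in reg(\k)$ that is a limit point of $C_0$, the set $C_0\cap\alpha$ is club in $\alpha$ and disjoint from $D\cap S\cap\alpha$, so $D\cap S\cap\alpha$ is non-stationary in $\alpha$. As
\[
\{\b\in S\cap\alpha\mid\eta(\b)\neq f_\alpha(\b)\}\ \sd\ \{\b\in S\cap\alpha\mid\xi(\b)\neq f_\alpha(\b)\}\ \subseteq\ D\cap S\cap\alpha,
\]
the two sets differ by a non-stationary subset of $\alpha$, and since $\NS\restriction\alpha$ is an ideal, one is non-stationary in $\alpha$ iff the other is; hence $F(\eta)(\alpha)=F(\xi)(\alpha)$. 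For $\alpha\notin reg(\k)$ both values are $0$. Thus $\{F(\eta)\neq F(\xi)\}$ is contained in the non-stationary set $\k\setminus\lim(C_0)$, and in particular $(F(\eta),F(\xi))\in E^{2,\k}_{reg}$.

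For the backward direction suppose $(\eta,\xi)\notin E^{\k,\k}_S$ and put $Z=D\cap S$, a stationary subset of $S$. Here is where the dual diamond does the essential work: applying it to the pair $(Z,\eta)$ yields a stationary set
\[
W=\{\alpha\in reg(\k)\mid \eta\restriction\alpha=f_\alpha\ \wedge\ Z\cap\alpha\text{ is stationary in }\alpha\}.
\]
For $\alpha\in W$ we have $f_\alpha=\eta\restriction\alpha$, hence $\{\b\in S\cap\alpha\mid\eta(\b)\neq f_\alpha(\b)\}=\emptyset$ and $F(\eta)(\alpha)=1$; on the other hand $\{\b\in S\cap\alpha\mid\xi(\b)\neq f_\alpha(\b)\}=\{\b\in S\cap\alpha\mid\xi(\b)\neq\eta(\b)\}=Z\cap\alpha$, which is stationary in $\alpha$, so $F(\xi)(\alpha)=0$. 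Therefore $F(\eta)$ and $F(\xi)$ disagree on the stationary set $W\subseteq reg(\k)$, giving $(F(\eta),F(\xi))\notin E^{2,\k}_{reg}$.

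Finally, I would address the role of the hypothesis on $S$. When $S=S^\k_\l$ the sets $S\cap\alpha=S^\alpha_\l$ are stationary in every regular $\alpha>\l$, so $E^{\k,\k}_S\restriction\alpha$ is a genuine equivalence relation there and both arguments above are unobstructed. When $S=reg(\k)$ the requirement that $Z\cap\alpha$ be stationary with $Z\subseteq reg(\k)$ forces $\alpha$ to be Mahlo, so the stationary guessing set $W$ automatically lands among the Mahlo cardinals, where $reg(\alpha)$ is stationary and $E^{\k,\k}_{reg}\restriction\alpha$ is non-trivial; weak compactness of $\k$ guarantees that $reg(\k)$ is stationary and that there are stationarily many such guessing points to feed the dual diamond. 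I expect the main obstacle to be exactly this tension carried by the single bit $F(\eta)(\alpha)$: it must be robust enough to survive non-stationary modifications on $S$ (forward direction) yet sensitive enough that the dual diamond can force a genuine disagreement (backward direction). The ``non-stationarity of disagreement with $f_\alpha$'' formulation, together with the fact that the dual diamond both guesses $\eta\restriction\alpha$ exactly and makes $Z$ reflect at the same $\alpha$, is precisely what reconciles the two demands.
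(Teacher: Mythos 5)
Your proof is correct and follows essentially the same route as the paper: the paper's reduction sets $F(\eta)(\a)=1$ iff $\a\in reg(\k)$ and $\eta$ is $(E^{\k,\k}_S\restriction\a)$-equivalent to an extension $g_\a$ of $f_\a$, which is exactly your bit, and both arguments use the dual diamond on the pair $(Z,\eta)$ in the same way. The only cosmetic difference is that you justify the forward direction directly via the ideal property of $\NS\restriction\a$ (valid for all uncountable regular $\a$, and the single degenerate point $\a=\o$ is harmless), whereas the paper builds the caveat ``$E_S\restriction\a$ is an equivalence relation'' into the definition of $F$ and invokes transitivity.
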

\begin{proof}
  Let $\la f_\a\ra_{\a<\k}$ be a sequence that witnesses the $S$-dual
  diamond.  Let $g_\a:\k\to\k$ be the function defined by
  $g_\a\rest\a=f_\a$ and $g_\a(\beta)=0$ for all $\beta\ge \a$. Let us
  define $F\colon \k^\k\to 2^\k$ by
  $$F(\eta)(\a)=
  \begin{cases}
    1 \text{ if }\a\in reg(\k), E_S\rest\a\text{ is an equivalence relation, and }(\eta, g_\a)\in E_S\rest\a\\
    0 \text{ otherwise}.
  \end{cases}
  $$
  
  Let us prove that if $(\eta,\xi)\in E_S$, then $(F(\eta),F(\xi))\in
  E^{2,\k}_{reg}$.  Suppose $(\eta,\xi)\in E_S$. Note that
  $F(\eta)(\a)=F(\xi)(\a)=0$ for all $\a\notin reg(\k)$, so it is
  sufficient to show that the set
  $$\{\a\in reg(\k)\mid F(\eta)(\a)\ne F(\xi)(\a)\}$$
  is non-stationary. Now, there is a club $D$ such that
  $D\cap\{\alpha\in S\mid \eta(\a)\neq\xi(\a)\}$ is
  non-stationary. So, letting $C$ be the club of the limit points of
  $D$, it holds that for all $\a\in C\cap reg(\k)$, the functions
  $\eta$ and $\xi$ are $E_S\rest\a$-equivalent. Thus, by the
  definition of $F$, at the points of the set $C\cap reg(\k)$ the
  functions $F(\eta)$ and $F(\xi)$ will get the same value.
  
  Now let us prove that if $(\eta,\xi)\notin E_S$, then
  $(F(\eta),F(\xi))\notin E^{2,\k}_{reg}$. Suppose that
  $(\eta,\xi)\notin E_S$.  Then there is a stationary $Z\subset S$ on
  which $\eta(\a)\ne\xi(\a)$.  By the definition of $S$--dual diamond,
  there is a stationary set $A\subseteq reg(\k)$ such that for all
  $\a\in A$ we have that $Z\cap \a$ is stationary and
  $\eta\rest\a=f_\a$. This means that
  $$\{\b<\a\mid \eta(\b)\ne\xi(\b)\}$$
  is stationary, and so $(\eta,\xi)\notin E_S\rest\a$ holds for all
  $\a\in A$.  However $\eta\rest\a=f_\a$ implies that $(\eta,g_\a)\in
  E_S\rest\a$, and so by transitivity $(\xi,g_\a)\notin E_S\rest\a$.
  Hence we get that $F(\eta)(\a)=1$, but $F(\xi)(\a)=0$. This holds
  for all $\a\in A$ and $A$ is stationary, so $(F(\eta),F(\xi))\notin
  E^{2,\k}_{reg}$.
\end{proof}

\begin{Lemma}\label{lem:DualDiamond}\label{weakly-compact-dual-diamond}
  Suppose $V=L$ and $\kappa$ is a weakly compact cardinal. Then $\k$
  has the $S_\o^\k$--dual diamond.
\end{Lemma}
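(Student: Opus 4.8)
The plan is to construct the required $S^\k_\o$--dual diamond sequence directly in $L$, exploiting the $\Diamond_\k$-style condensation arguments that are available under $V=L$, combined with the tree property / reflection that weak compactness provides. Recall that we must produce $\la f_\a\ra_{\a<\k}$ with $f_\a\colon\a\to\a$ such that for every pair $(Z,g)$ with $Z\subseteq S^\k_\o$ stationary and $g\in\k^\k$, the set of $\a\in\reg(\k)$ with $g\rest\a=f_\a$ \emph{and} $Z\cap\a$ stationary in $\a$ is stationary. The key observation is that since $\k$ is weakly compact (hence inaccessible and Mahlo) in $L$, the set $\reg(\k)$ is stationary, and moreover $\Pi^1_1$-reflection is available: any $\Pi^1_1$ statement about a structure on $V_\k$ that holds at $\k$ reflects to a stationary set of inaccessible $\a<\k$.

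The main steps I would carry out are as follows. First, using the canonical $L$-well-ordering, I would define $f_\a$ by recursion: at each $\a$ which is inaccessible, look at the $<_L$-least pair $(Z_\a,g_\a)$ (coded as a subset of $\a$) in $L_{\theta_\a}$ for an appropriate $\theta_\a$ witnessing that some ``anti-diamond'' failure occurs up to $\a$, and set $f_\a=g_\a\rest\a$; otherwise set $f_\a=\es$ or some default. This is the standard $\Diamond$-in-$L$ recursion, adapted so the guessing target is a \emph{pair} rather than a single set. Second, given an arbitrary pair $(Z,g)$ with $Z\subseteq S^\k_\o$ stationary, I would suppose toward a contradiction that the guessing set is non-stationary, so there is a club $C$ on which guessing fails. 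Third — and this is where weak compactness enters — I would encode $(Z,g,C)$ together with the recursion into a single predicate $A\subseteq V_\k$ and write the statement ``$Z$ is stationary, $C$ is club, and for all inaccessible $\a\in C$ the guessing of $(Z\cap\a,g\rest\a)$ fails'' as a $\Pi^1_1$ statement over $(V_\k,\in,A)$. By $\Pi^1_1$-reflection (weak compactness) this reflects to some inaccessible $\a\in C$, where $Z\cap\a$ is stationary in $\a$ and the reflected recursion, by condensation, computes exactly $f_\a=g\rest\a$, contradicting that guessing failed at $\a$.

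The hard part will be arranging the condensation/coherence so that at the reflecting point $\a$ the locally defined $f_\a$ really equals $g\rest\a$ for the \emph{given} global pair $(Z,g)$, not merely for some locally least pair. This is the usual delicate point in $\Diamond$-type proofs: one must set up the recursion so that the reflected $\Pi^1_1$ statement not only locates an inaccessible $\a$ but also forces the local $L$-least witness to coincide with the restriction of the true global object. The cleanest route is to bundle the failure of guessing into the $\Pi^1_1$ sentence itself — i.e., assert ``$\a$ is inaccessible, $Z\cap\a$ is stationary in $\a$, and $f_\a\ne g\rest\a$'' as part of what reflects — and then show that the first $\a$ (in the club $C$) produced by reflection must have $f_\a=g\rest\a$ by the very minimality built into the recursion, yielding the contradiction. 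I expect the simultaneous demands of stationarity of $Z\cap\a$ and exact guessing $f_\a=g\rest\a$ to require the two-step reflection bookkeeping analogous to Lemma~\ref{indescr-strong-reflection}, where one first reflects to a stationary set and then isolates a minimal reflecting point to pin down the local computation.
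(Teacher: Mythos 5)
Your overall strategy is the paper's strategy: a $<_L$-least-witness recursion defining the candidate sequence, followed by a reflection argument using weak compactness and condensation (the paper describes its proof as a straightforward modification of \cite[Theorem 59]{FHK}). You have also correctly located the one delicate point, namely forcing the locally chosen $<_L$-least witness at the reflecting $\a$ to coincide with the restriction of the \emph{given} global pair. But the mechanism you propose for closing that gap is not the one that works, and as described it would fail. Reflecting the $\Pi^1_1$ sentence ``$Z\cap\a$ is stationary in $\a$ and $f_\a\ne g\rest\a$'' only produces more points at which guessing fails --- that is the hypothesis you are trying to refute, not a contradiction --- and ``the first $\a$ in $C$ produced by reflection'' has no minimality property that interacts with the recursion. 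Likewise, the two-step reflection bookkeeping of Lemma~\ref{indescr-strong-reflection} (reflect to a stationary set, then isolate a minimal reflecting point) plays no role here.

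The correct assembly, as in the paper, is to put the minimality on the \emph{counterexample}, not on the reflection point: assume the lemma fails and let $(Z,g,C)$ be the $\leq_L$-\emph{least} counterexample (with $C$ a club, intersected with $\reg(\k)$, avoiding the guessing set). Then use weak compactness to obtain an elementary submodel $M\prec L_\l$ with $|M|<\k$, $\a=M\cap\k\in C$, $Z\cap\a$ stationary in $\a$, and $Z,g,C\in M$; the stationarity of $Z\cap\a$ at the critical point is exactly what $\Pii$-reflection buys and cannot be arranged by a plain Skolem-hull argument. Taking the Mostowski collapse $G\colon M\to L_\g$ sends $(Z,g,C)$ to $(Z\cap\a,g\rest\a,C\cap\a)$, and the formula ``is the $\leq_L$-least triple such that guessing fails at every earlier stage'' relativises to $L_\g$ with all notions absolute (using $V=L$ and inaccessibility of $\a$ so that $\mathcal{P}(\b)\subseteq L_\a$ for $\b<\a$). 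Hence the recursion at stage $\a$ chose precisely $f_\a=g\rest\a$ (and, in the paper's stronger formulation, $D_\a=Z\cap\a$), so $(Z,g)$ \emph{is} guessed at $\a\in C$ with $Z\cap\a$ stationary --- contradicting the choice of $C$. Note also that the paper proves the stronger ``exact guessing'' form in which $Z\cap\a=D_\a$ rather than merely $Z\cap\a$ being stationary in $\a$; this is what makes the failure condition in the recursion a clean first-order equality that survives the collapse. With these repairs your outline becomes the paper's proof.
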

\begin{proof}
  It is shown in the proof of \cite[Theorem 55(A)]{FHK} that $S^\k_\o$
  strongly reflects to $S^\k_{reg}$
  (Definition~\ref{def:StronglyReflects}). The rest of the proof is a
  straightforward modification of the proof of \cite[Theorem 59]{FHK},
  but we give it here for the sake of completeness. We will show that
  there is a sequence $\la D_\alpha, f_\alpha\ra_{\alpha<\kappa}$ such
  that
  \begin{itemize}
  \item $D_\alpha\subset \alpha$ is stationary in $\alpha$ for all
    $\a$,
  \item $f_\alpha:\alpha\rightarrow \alpha$ for all $\a$,
  \item if $(Z, g)$ is a pair such that $Z\subset S_\omega^\kappa$ is
    stationary and $g\in \kappa^\kappa$, then the set $$\{\alpha\in
    reg(\k)\mid g\restriction \alpha=f_\alpha\wedge
    Z\cap\alpha=D_\alpha\}$$ is stationary.
  \end{itemize}
  It is clear that this implies that $\k$ has the $S_\o^\k$-dual
  diamond.
  
  For the purpose of the proof we define a triple $\la
  D_\a,f_\a,C_\a\ra$. Suppose that $\la D_\b,f_\b,C_\b\ra$ is already
  defined for $\b<\a$.
  
  Now define $\la D, f , C\ra$ to be the $\leq_L$-least triple such
  that
  \begin{itemize}
  \item $D\subset \a\cap S^\k_\o$ is stationary,
  \item $f\colon\a\to\a$,
  \item $C$ is the intersection with $S^\k_{\reg}$ of a closed and
    unbounded subset of $\a$,
  \item for all $\b<\a$, $D\cap\b\ne D_\b\text{ or }f\rest\b\ne f_\b$,
  \end{itemize}
  and set $D_\a=D$, $f_\a=f$, $C_\a=C$ if such exists, and
  $D_\a=f_\a=C_\a=\es$ otherwise.
  
  Now our assumption is that there is a counterexample to the theorem,
  so let $(Z, g, C)$ be the $\leq_L$-least counterexample. Let $M$ be
  an elementary submodel of $L_\l$, for some regular $\l>\k$, such
  that
  \begin{itemize}
  \item $|M|<\k$,
  \item $\a=M\cap \k\in C$,
  \item $Z\cap \a$ is stationary in $\a$, and
  \item $Z$, $g$, $C$, $S^\k_\o$, $S^\k_{\reg}$, $\k\in M$.
  \end{itemize}
  $M$ exists by the $\Pii$-reflection of the weakly compact~$\k$.  Now
  take the Mostowski collapse $G\colon M\to L_\g$, for some $\g>\a$.
  Now $G(Z)=Z\cap \a$, $G(g)=g\rest\a$, $G(C)=C\cap \a$, $G(\k)=\a$,
  and the sequence $\la D_\b, f_\b\ra_{\b<\a}$ is definable in $L_\g$.
  
  Let $\f(D, g, C, \k)$ be a formula that says ``$(D, g, C)$ is the
  $\leq_L$-least triple such that
  \begin{itemize}
  \item $D\subset S^\k_\o$ is stationary,
  \item $g\colon\k\to\k$,
  \item $C$ is the intersection with $S^\k_{\reg}$ of a cub of $\k$,
    and
  \item for all $\b<\k$, $D\cap\b\ne D_\b\text{ or }f\rest\b\ne
    f_\b$.''
  \end{itemize}
  But this formula relativises to $L_\g$ and all notions are
  sufficiently absolute. When relativised, it says that $(D, g)$
  reflects to $\a\in C$, which contradicts the assumption that $(D, g,
  C)$ was a counterexample.
\end{proof}

\begin{Cor}\label{cor:E^2_reg}
  Suppose $V=L$ and $\k$ is weakly compact. Then $E^{2,\k}_{reg}$ is
  $\Sii$-complete.
\end{Cor}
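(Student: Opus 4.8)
The plan is to derive the $\Sii$-completeness of $E^{2,\k}_{reg}$ by composing an already-established $\Sii$-completeness result with the continuous reduction provided by Theorem~\ref{3.4}, routing everything through the $\o$-club relation on the generalized Baire space.

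First I would apply Lemma~\ref{lem:DualDiamond}: since $V=L$ and $\k$ is weakly compact, $\k$ has the $S_\o^\k$-dual diamond. The stationary set $S_\o^\k$ is exactly the instance $\l=\o$ of the first alternative in the hypothesis of Theorem~\ref{3.4}, so that theorem yields the continuous reduction $E^{\k,\k}_{\o\text{-club}}=E^{\k,\k}_{S_\o^\k}\le_c E^{2,\k}_{reg}$, and hence $E^{\k,\k}_{\o\text{-club}}\le_B E^{2,\k}_{reg}$ since continuity implies Borelness. Second, I would invoke Theorem~\ref{thm:E_muSiiComplete} with $\mu=\o$: under $V=L$ and $\k>\o$, the relation $E^{\k,\k}_{\o\text{-club}}$ is $\Sii$-complete, so every $\Sii$ equivalence relation $E$ satisfies $E\le_B E^{\k,\k}_{\o\text{-club}}$.

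Finally, composing these two reductions and using that the composition of Borel reductions is again a Borel reduction, I obtain $E\le_B E^{2,\k}_{reg}$ for every $\Sii$ equivalence relation $E$, which is precisely the assertion that $E^{2,\k}_{reg}$ is $\Sii$-complete. (That $E^{2,\k}_{reg}$ is itself $\Sii$ is clear from its definition, as non-stationarity of the symmetric difference restricted to $reg(\k)$ is witnessed by the existence of a disjoint club.) The argument is a straightforward two-step composition, with no genuine obstacle; the only points requiring care are verifying that $S_\o^\k$ meets the hypotheses of Theorem~\ref{3.4} and that the dual diamond supplied by Lemma~\ref{lem:DualDiamond} is exactly the one that theorem consumes, both of which hold immediately.
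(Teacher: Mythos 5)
Your proposal is correct and is essentially identical to the paper's own proof, which simply cites the same three ingredients: Lemma~\ref{weakly-compact-dual-diamond} to obtain the $S_\o^\k$--dual diamond, Theorem~\ref{3.4} to get $E^{\k,\k}_{\o\text{-club}}\le_c E^{2,\k}_{reg}$, and Theorem~\ref{thm:E_muSiiComplete} (with $\mu=\o$) for the $\Sii$-completeness of $E^{\k,\k}_{\o\text{-club}}$, composed exactly as you describe.
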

\begin{proof}
  This follows from Theorem \ref{thm:E_muSiiComplete}, Lemma
  \ref{weakly-compact-dual-diamond} and Theorem \ref{3.4}.
\end{proof}

\begin{Thm}\label{3.7}
  Suppose $\k$ is a supercompact cardinal. There is a generic
  extension $V[G]$ in which $E_{reg}^{\k,\k}\leq_c E^{2,\k}_{reg}$ holds
  and where $\k$ is still supercompact.
\end{Thm}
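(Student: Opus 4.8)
The plan is to reduce the theorem to Theorem~\ref{3.4}. A supercompact cardinal is in particular weakly compact, and since I intend to preserve supercompactness, in $V[G]$ the cardinal $\k$ will still be weakly compact; hence by the case $S=\reg(\k)$ of Theorem~\ref{3.4} it suffices to produce a generic extension $V[G]$ in which $\k$ remains supercompact and carries the $\reg(\k)$--dual diamond. Thus the whole problem reduces to: force the $\reg(\k)$--dual diamond while keeping $\k$ supercompact.

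First I would run a Laver preparation, passing to a model $W$ in which $\k$ is supercompact and this supercompactness is indestructible under $\k$-directed-closed forcing. Then, working in $W$, I would add the dual-diamond sequence by approximations: let $\mathbb{P}$ consist of all sequences $\la f_\a\ra_{\a<\b}$ with $\b<\k$ and $f_\a\colon\a\to\a$, ordered by end-extension, so the generic produces $\la f_\a\ra_{\a<\k}$. Two conditions are compatible iff they are comparable, so any directed family of size ${<}\k$ is a chain whose union (of length ${<}\k$, by regularity of $\k$) is again a condition; hence $\mathbb{P}$ is $\k$-directed-closed. By indestructibility $\k$ is still supercompact, in particular weakly compact, in $V[G]:=W^{\mathbb{P}}$. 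Moreover $|\mathbb{P}|=\k$ (as $\k^{<\k}=\k$ survives the preparation), so $\mathbb{P}$ is $\k^+$-c.c., and being ${<}\k$-closed it adds no new bounded subsets of $\k$; consequently $\reg(\k)$, the regularity of ordinals below $\k$, and stationarity of subsets of any $\a<\k$ are computed identically in $W$ and in $V[G]$.

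It remains to check that $\mathbb{P}$ forces the $\reg(\k)$--dual diamond, and this is the step I expect to be the main obstacle. Given names $\dot Z$ (for a stationary $Z\subseteq\reg(\k)$), $\dot g$ (for an element of $\k^\k$) and $\dot C$ (for a club), together with a condition $p$ forcing $\dot Z$ stationary and $\dot C$ club, I must find $q\le p$ and $\a\in\reg(\k)$ such that $q$ forces $\a\in\dot C$, $f_\a=\dot g\rest\a$, and $Z\cap\a$ stationary in $\a$; showing this set is dense below $p$ contradicts the existence of a counterexample. Since $\mathbb{P}\subseteq V_\k$ and the relevant nice names are codeable as subsets of $V_\k$, I would code $p,\dot Z,\dot g,\dot C,\mathbb{P}$ into a single predicate $A\subseteq V_\k$ and argue as in the reflection arguments of Theorem~\ref{indescr-forcing} and Lemma~\ref{lem:DualDiamond}: the assertion that $p$ forces $\dot Z$ stationary, unfolded through the forcing relation exactly as in the proof of Theorem~\ref{indescr-forcing} (using that $\check\a$ is absolute for $V_\k$), is a $\Pi^1_1$ property of $(V_\k,\in,A)$, so by weak compactness of $\k$ in $W$ it reflects to an inaccessible $\a<\k$ at which the restricted condition forces $\dot Z\cap\a$ stationary in $\a$. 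Taking $\a$ also closed under $\dot g$ (so $\dot g\rest\a$ maps $\a$ into $\a$), I would then use ${<}\k$-closure to extend $p$ to decide $\dot g\rest\a=\check h$ for some $h\colon\a\to\a$ and, by a descending chain of length $\cf(\a)$ each step forcing a further point of $\dot C$, arrange $\a\in\dot C$; setting $q(\a)=h$ produces a condition forcing $f_\a=\dot g\rest\a$ with $\a$ regular, $\a\in\dot C$, and $Z\cap\a$ stationary.

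The two points needing the most care are the unfolding of ``$p\Vdash\dot Z$ is stationary'' into a genuinely $\Pi^1_1$ statement over $(V_\k,\in,A)$ and the simultaneous bookkeeping that makes the reflecting ordinal $\a$ both land in $\dot C$ and serve as the guessing point. Once these are in place, $\mathbb{P}$ forces the $\reg(\k)$--dual diamond, so Theorem~\ref{3.4} with $S=\reg(\k)$ yields $E^{\k,\k}_{reg}\le_c E^{2,\k}_{reg}$ in $V[G]$, where $\k$ is still supercompact, completing the proof.
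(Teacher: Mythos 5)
Your overall architecture matches the paper's: reduce to Theorem~\ref{3.4} with $S=\reg(\k)$, Laver-prepare so that supercompactness becomes indestructible under $\k$-directed closed forcing, and then add the dual diamond sequence by bounded approximations with a $\k$-directed closed poset, verifying the guessing property by a $\Pi^1_1$-reflection/density argument. (The paper's poset is the set version, $\{f\colon\a\to\mathcal{P}(\a)\mid\a<\k\}$ ordered by extension, and it in fact obtains the stronger $\k$--dual diamond, but those differences are cosmetic.)

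The gap is in the last step of your density argument. You first fix the reflecting ordinal $\a$, then propose to decide $\dot g\rest\a$ by a descending chain of extensions of $p$, and finally ``set $q(\a)=h$''. But each step of that chain may lengthen the condition arbitrarily below $\k$, so after $\a$ many steps the resulting condition can already have length at least $\a+1$; its $\a$-th coordinate is then determined and there is no freedom left to make it equal to $h=\dot g\rest\a$. The same overshoot problem affects your attempt to force $\a\in\dot C$. The order of quantifiers must be reversed: one first isolates a club of ``catch-up'' points $\a$ at which the part of the generic of length exactly $\a$ already decides $\dot g\rest\a$, $\dot C\cap\a$ and $\dot Z\cap\a$, and only then reflects so as to land on such an $\a$ (also chosen above the length of $p$); at such a point the already-decided value of $\dot g\rest\a$ can be appended as the single new coordinate $q(\a)$. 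This is exactly how the paper arranges $D_\a=\dot X\cap\a$. A second, smaller divergence: rather than unfolding ``$p\Vdash\dot Z$ is stationary'' into a $\Pi^1_1$ statement over $(V_\k,\in,A)$ in $W$ as in Theorem~\ref{indescr-forcing}, the paper reflects inside the extension $V[H][G]$ itself, applying $\Pi^1_1$-indescribability of the still-supercompact $\k$ directly to the interpreted objects $\dot S_G$ and the club, and then uses ${<}\k$-closure to pull the witnessing condition back to the ground model; this sidesteps the forcing-relation bookkeeping entirely, and you may want to adopt it.
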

\begin{proof}
  By Theorem \ref{3.4}, it is enough to find a forcing extension in
  which $\k$ has the $reg(\k)$--dual diamond.
  
  In \cite{Lav} it is proved that if $\kappa$  is a supercompact cardinal, then there is a forcing extension in which $\kappa$ remains supercompact upon forcing with any $\kappa$--directed closed forcing. Let us denote by $V[H]$ this forcing extension.
  
  Now we will find a forcing extension of $V[H]$ in which $\k$ has the
  $reg(\k)$--dual diamond. In fact, we will show something stronger, we
  will show that there is a forcing extension in which $\k$ has the
  $\k$--dual diamond. Working in $V[H]$, let
  $\mathbb{P}=\{f:\alpha\rightarrow\mathcal{P}(\alpha)\mid\alpha<\k\}$
  ordered by: $p\leq q$ if $q\subseteq p$. It is easy to see that
  $\mathbb{P}$ is $\k$-directed closed, and thus $\mathbb{P}\Vdash
  \check{\k} \mbox{ is supercompact}$. We will prove that $\mathbb{P}$
  forces that $\k$ has the $\k$--dual diamond. Suppose, towards a
  contradiction, that there is $G$ a $\mathbb{P}$-generic over $V[H]$
  such that $\k$ does not have the $\k$--dual diamond in $V[H][G]$. Let
  $p\in G$, $\dot{S}$, $\dot{X}$ be such that $p$ forces that the
  sequence $\{D_\alpha=(\bigcup G)(\a)\mid\alpha<\k\}$ does not guess
  $(\dot{S},\dot{X})$ as wanted, i.e.,
  
  $p\Vdash\dot{S}$\mbox{ is stationary, }$\dot{X}\subseteq \k$,
  \mbox{and the sequence }$\la D_\alpha\ra_{\a<\k}$\mbox{ does not guess
  }$\dot{X}\cap\a$ \mbox{in any }$\a$ such that $\dot{S}\cap\a$
  \mbox{is stationary.}
  
  We will show that the set $\{q<p\mid q\Vdash \exists \a\,
  (D_\a=\dot{X}\cap \a \wedge \dot{S}\cap \a\mbox{ is stationary})\}$
  is dense below $p$, which is a contradiction. There is a club
  $C\subseteq \k$ in $V[H]$ such that for all $\a\in C$ it holds that
  $p\subset \bigcup G\restriction \a$, and $\bigcup G\restriction \a$
  decides $\dot{X}\cap\a$ and $\dot{S}\cap \a$. Since $C\in V[H]$, we
  have that $C\in V[H][G]$. On the other hand, $\k$ is
  $\Pi_1^1$-indescribable in $V[H][G]$, and the sentence: $$(C \mbox{
    is unbounded in }\kappa)\wedge (\dot{S}_G \mbox{ is stationary in
  } \kappa)\wedge (\kappa \mbox{ is regular})$$ is a $\Pi^1_1$
  property of the structure $(V^{V[H][G]}_{\kappa}, \in, \dot{S}_G,
  C)$. By $\Pi_1^1$-reflection, there is $\a<\k$ in $V[H][G]$ such
  that $C\cap \a$ is unbounded, and hence $\a\in C$, $\dot{S}_G
  \cap\a$ is stationary in $\a$, and $\a$ is regular. Since
  $\mathbb{P}$ is ${<}\k$-closed, we have that $\dot{X}\cap \a\in
  V[H]$. Let $q$ be the condition $\bigcup G\restriction
  \a\cup\{(\a,\dot{X}\cap \a)\}$. Clearly $q<p$ and $q\Vdash \exists
  \a (D_\a=\dot{X}\cap \a \wedge \dot{S}\cap \a\mbox{ is stationary})$
  as we wanted.
\end{proof}

\begin{Thm}\label{3.8}
  If $\k$ is a $\Pi_2^1$-indescribable cardinal, then
  $E^{\k,\k}_{reg}$ is $\Sigma_1^1$-complete.
\end{Thm}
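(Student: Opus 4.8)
The plan is to prove $\Sii$-completeness directly: I would show that an \emph{arbitrary} $\Sii$ equivalence relation $E$ on $\k^\k$ is continuously reducible to $E^{\k,\k}_{reg}$. (Note first that $E^{\k,\k}_{reg}$ is itself $\Sii$: two functions are equivalent iff there is a club, coded as an element of $\k^\k$, on which they agree at every regular point, and this is the projection of a closed set.) Fix a tree $t\subseteq\bigcup_{\b<\k}(\b^\b\times\b^\b\times\b^\b)$ coding $E$, so that $(\eta,\xi)\in E$ iff the tree $t_{\eta,\xi}=\{s\mid(\eta\rest|s|,\xi\rest|s|,s)\in t\}$ has a cofinal branch. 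For each $\a<\k$ let $E\rest\a$ be the local relation on $\a^\a$ defined by ``$t_{\sigma,\tau}\cap V_\a$ has a branch of length $\a$'', and note that whether $(\eta\rest\a,\xi\rest\a)\in E\rest\a$ depends only on $\eta\rest\a$, $\xi\rest\a$ and $t\cap V_\a$.

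Next I would define the reduction. Let $G=\{\a\in\reg(\k)\mid E\rest\a\text{ is an equivalence relation on }\a^\a\}$. Since $\k$ is inaccessible, for $\a\in G$ there are at most $|\a^\a|<\k$ many $E\rest\a$-classes, so (uniformly in $\a$, using only $t\cap V_\a$) fix an injective assignment of these classes to ordinals below $\k$. Define $F\colon\k^\k\to\k^\k$ by letting $F(\eta)(\a)$ be the code of the $E\rest\a$-class of $\eta\rest\a$ when $\a\in G$, and $F(\eta)(\a)=0$ otherwise. Because membership of $\a$ in $G$ and the code of a class depend only on $\eta\rest\a$ and $t\cap V_\a$, the value $F(\eta)\rest\g$ is determined by $\eta\rest\g$, so $F$ is continuous. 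The key feature is that for every $\a\in\reg(\k)$ we have $F(\eta)(\a)=F(\xi)(\a)$ iff either $\a\notin G$, or $\a\in G$ and $(\eta\rest\a,\xi\rest\a)\in E\rest\a$.

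For the positive direction, suppose $(\eta,\xi)\in E$, witnessed by a cofinal branch $\zeta$ of $t_{\eta,\xi}$. Being a branch is a first order property of $(V_\k,\in,t,\eta,\xi,\zeta)$, so for every $\a$ in the club $C_0$ of closure points of $\zeta$, the restriction $\zeta\rest\a$ is a branch of length $\a$ through $t_{\eta\rest\a,\xi\rest\a}\cap V_\a$, i.e.\ $(\eta\rest\a,\xi\rest\a)\in E\rest\a$. Hence for every $\a\in C_0\cap\reg(\k)$ we get $F(\eta)(\a)=F(\xi)(\a)$ (equal codes if $\a\in G$, both $0$ if $\a\notin G$). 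Thus the disagreement set meets $\reg(\k)$ in a nonstationary set and $(F(\eta),F(\xi))\in E^{\k,\k}_{reg}$. Observe this direction needs no indescribability and, crucially, no control over $G$, since on $\reg(\k)\setminus G$ both values default to $0$.

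The negative direction is the main obstacle, and is exactly where $\Pi_2^1$-indescribability is used. Suppose $(\eta,\xi)\notin E$. The difficulty is twofold: $t_{\eta,\xi}$ may still have branches of length $\a$ for stationarily many $\a$, so local equivalence can hold on a stationary set; and even where local equivalence fails, $F$ only records it at points of $G$, while $\reg(\k)\setminus G$ might a priori be stationary. Both are resolved simultaneously by reflection. The statement ``$t_{\eta,\xi}$ has no cofinal branch'' is $\Pii$ over $(V_\k,\in,t,\eta,\xi)$, whereas ``$E$ is an equivalence relation'' is genuinely $\Pi_2^1$, since its transitivity clause has the form $\forall\sigma\tau\rho(\Sii\wedge\Sii\to\Sii)$. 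Their conjunction together with ``$C$ is unbounded'' and ``$\k$ is regular'' is a $\Pi_2^1$ property of $(V_\k,\in,t,\eta,\xi,C)$ holding for any club $C$; by $\Pi_2^1$-indescribability it reflects to some regular $\a\in C$ for which $t_{\eta\rest\a,\xi\rest\a}\cap V_\a$ has no branch of length $\a$ \emph{and} $E\rest\a$ is an equivalence relation, i.e.\ $\a\in G$ with $(\eta\rest\a,\xi\rest\a)\notin E\rest\a$. At such $\a$ we have $F(\eta)(\a)\ne F(\xi)(\a)$, and as $C$ was arbitrary the set $\{\a\in\reg(\k)\mid F(\eta)(\a)\ne F(\xi)(\a)\}$ is stationary, so $(F(\eta),F(\xi))\notin E^{\k,\k}_{reg}$. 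The crux is precisely this joint reflection: mere weak compactness ($\Pii$-indescribability) would reflect the no-branch clause but could not force the reflection points into $G$, which is what the full strength of $\Pi_2^1$-indescribability secures.
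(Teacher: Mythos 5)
Your proposal is correct and follows essentially the same route as the paper's own proof: approximate the $\Sii$ relation $E$ by local relations $E\rest\a$ via initial segments of the witnessing closed set, send $\eta$ to the function recording the code of its local equivalence class at those regular $\a$ where $E\rest\a$ is an equivalence relation (defaulting to $0$ elsewhere), and use $\Pi^1_2$-indescribability to reflect the conjunction of the $\Pii$ statement ``$(\eta,\xi)\notin E$'' with the genuinely $\Pi^1_2$ statement ``$E$ is transitive'' to stationarily many regular reflection points. Your closing remark correctly identifies where the strength beyond weak compactness is needed, exactly as in the paper.
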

\begin{Remark}
  Here the notion of $\Pi^1_2$--indescribability is the usual one, not to be confused with the $\Pi^\lambda_1$--indescribability from Definition \ref{def:PiLambda1Indescr}.
\end{Remark}
\begin{proof}
  Let $E$ be a $\Sii$ equivalence relation on $\kappa^\kappa$. Then
  there is a closed set $C$ on $\k^\k\times\k^\k\times\k^\k$ such that
  $\eta\ E\ \xi$ if and only if there exists $\theta\in \k^\k$ such
  that $(\eta,\xi,\theta)\in C$. Let us define $U=\{(\eta\restriction
  \alpha, \xi\restriction \alpha, \theta\restriction \alpha)\,\mid\,
  (\eta, \xi, \theta)\in C\wedge\a<\k\}$, and for every $\gamma<\k$
  define $C_\gamma=\{(\eta, \xi,
  \theta)\in\gamma^\gamma\times\gamma^\gamma\times\gamma^\gamma\,\mid\,\forall\a<\gamma\
  (\eta\restriction \alpha, \xi\restriction \alpha, \theta\restriction
  \alpha)\in U\}$. Let $E_\gamma\subset
  \gamma^\gamma\times\gamma^\gamma$ be the relation defined by
  $(\eta,\xi)\in E_\gamma$ if and only if there exists $\theta\in
  \gamma^\gamma$ such that $(\eta,\xi,\theta)\in C_\gamma$. Since $E$
  is an equivalence relation, it follows that $E_\gamma$ is reflexive
  and symmetric, but not necessary transitive. Let us define the
  reduction by
  $$F(\eta)(\a)=
  \begin{cases}
    f_\a(\eta) \text{ if }E_\a\text{ is an equivalence relation and }\eta\restriction\a\in\a^\a\\
    0 \text{ otherwise}.
  \end{cases}
  $$
  where $f_\a(\eta)$ is a code in $\k\backslash\{0\}$ for the
  $E_\a$-equivalence class of $\eta$.
  
  Let us prove that if $(\eta, \xi)\in E$, then $(F(\eta), F(\xi))\in
  E^{\k,\k}_{reg}$. Suppose $(\eta, \xi)\in E$. Then there is
  $\theta\in \k^\k$ such that $(\eta, \xi, \theta)\in C$ and for all
  $\a<\k$ we have that $(\eta\restriction \alpha, \xi\restriction
  \alpha, \theta\restriction \alpha)\in U$. On the other hand, we know
  that there is a club $D$ such that for all $\a\in D\cap reg(\k)$,
  $\eta\restriction\a$, $\xi\restriction\a$,
  $\theta\restriction\a\in\a^\a$. We conclude that for all $\a\in
  D\cap reg(\k)$, if $E_\a$ is an equivalence relation, then
  $(\eta,\xi)\in E_\a$. Therefore, for all $\a\in D\cap reg(\k)$,
  $F(\eta)(\a)=F(\xi)(\a)$, so $(F(\eta),F(\xi))\in E^{\k,\k}_{reg}$.
  Let us prove that if $(\eta,\xi)\notin E$, then
  $(F(\eta),F(\xi))\notin E^{\k,\k}_{reg}$. Suppose $\eta$, $\xi\in
  \k^\k$ are such that $(\eta, \xi)\notin E$. We know that there is a
  club $D$ such that for all $\a\in D\cap reg(\k)$,
  $\eta\restriction\a$, $\xi\restriction\a\in\a^\a$.
  
  Notice that because $C$ is closed $(\eta, \xi)\notin E$ is
  equivalent to $$\forall\theta\in \k^\k\ (\exists\a<\k\
  (\eta\restriction \alpha,\xi\restriction \alpha,\theta\restriction
  \alpha)\notin U),$$ so the sentence $(\eta,\xi)\notin E$ is a
  $\Pi_1^1$ property of the structure $(V_\k,\in,U,\eta,\xi)$. On the
  other hand, the sentence $\forall\zeta_1,\zeta_2,\zeta_3\in \k^\k
  [((\zeta_1,\zeta_2)\in E\wedge(\zeta_2,\zeta_3)\in
  E)\rightarrow(\zeta_1,\zeta_3)\in E]$ is equivalent to the sentence
  $\forall \zeta_1,\zeta_2,\zeta_3,\theta_1,\theta_2\in \k^\k
  [\exists\theta_3\in \k^\k(\psi_1\vee\psi_2\vee\psi_3)]$, where
  $\psi_1$, $\psi_2$ and $\psi_3$ are, respectively, the formulas
  $\exists\alpha_1<\k\ (\zeta_1\restriction
  \alpha_1,\zeta_2\restriction \alpha_1,\theta_1\restriction
  \alpha_1)\notin U$, $\exists\alpha_2<\k\ (\zeta_2\restriction
  \alpha_2,\zeta_3\restriction \alpha_2,\theta_2\restriction
  \alpha_2)\notin U$, and $\forall\alpha_3<\k\ (\zeta_1\restriction
  \alpha_3,\zeta_3\restriction \alpha_3,\theta_3\restriction
  \alpha_3)\in U$. Therefore, the sentence
  $\forall\zeta_1,\zeta_2,\zeta_3\in \k^\k [((\zeta_1,\zeta_2)\in
  E\wedge(\zeta_2,\zeta_3)\in E)\rightarrow(\zeta_1,\zeta_3)\in E]$ is
  a $\Pi_2^1$ property of the structure $(V_\k,\in,U)$. It follows
  that the sentence $$(D \mbox{ is unbounded in
  }\k)\wedge((\eta,\xi)\notin E)\wedge (E\mbox{ is an equivalence
    relation})\wedge (\k\mbox{ is regular})$$ is a $\Pi_2^1$ property
  of the structure $(V_\k,\in,U,\eta,\xi)$. By $\Pi_2^1$ reflection,
  we know that there are stationary many $\gamma\in reg(\k)$ such that
  $\gamma$ is a limit point of $D$, $E_\gamma$ is an equivalence
  relation, and $(\eta\restriction \gamma,\xi\restriction
  \gamma)\notin E_\gamma$. We conclude that there are stationary many
  $\gamma\in reg(\k)$ such that $f_\gamma(\eta)\neq f_\gamma(\xi)$,
  and hence $(F(\eta),F(\eta))\notin E^{\k,\k}_{reg}$.
\end{proof}

\begin{Cor}\label{cor2:E^2_reg}
  Suppose $\k$ is a supercompact cardinal. There is a generic
  extension $V[G]$ in which $E_{reg}^{2,\k}$ is $\Sii$-complete.
\end{Cor}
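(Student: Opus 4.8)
The plan is to derive this as an immediate consequence of Theorems \ref{3.7} and \ref{3.8}, so that almost no new work is required. First I would invoke Theorem \ref{3.7} to pass to a generic extension $V[G]$ in which $E_{reg}^{\k,\k}\leq_c E^{2,\k}_{reg}$ holds \emph{and} $\k$ remains supercompact. The entire argument then takes place inside $V[G]$, the idea being that supercompactness is a far stronger hypothesis than what is needed to run Theorem \ref{3.8}.

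Next I would observe that a supercompact cardinal is in particular $\Pi_2^1$-indescribable in the usual sense (the notion referred to in the Remark following Theorem \ref{3.8}); indeed supercompactness implies $\Pi_n^1$-indescribability for every $n$. Hence Theorem \ref{3.8} applies in $V[G]$ and tells us that $E_{reg}^{\k,\k}$ is $\Sii$-complete there, i.e.\ every $\Sii$ equivalence relation on $\k^\k$ is Borel reducible to $E_{reg}^{\k,\k}$.

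Finally I would combine the two facts by transitivity of Borel reducibility. Given an arbitrary $\Sii$ equivalence relation $E$, completeness of $E_{reg}^{\k,\k}$ in $V[G]$ yields a Borel reduction $E\le_B E_{reg}^{\k,\k}$, and Theorem \ref{3.7} yields a continuous, hence Borel, reduction $E_{reg}^{\k,\k}\le_c E^{2,\k}_{reg}$. Composing these two functions gives a Borel reduction $E\le_B E^{2,\k}_{reg}$, since the composition of Borel functions is Borel and the reduction property is preserved under composition. As $E$ was arbitrary, $E^{2,\k}_{reg}$ is $\Sii$-complete in $V[G]$.

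There is essentially no obstacle here: all the content lies in Theorems \ref{3.7} and \ref{3.8}, and the only points to check are the standard implication that supercompactness gives $\Pi_2^1$-indescribability and the routine transitivity of $\le_B$. I might additionally remark that $E^{2,\k}_{reg}$ is itself $\Sii$ -- being equivalent modulo $\NS\rest reg(\k)$ is witnessed by an existential quantifier over a club in $\k^\k$ in front of a closed condition -- so that ``complete'' is in fact realized within the class $\Sii$, although this is not strictly required by the definition of $\Sii$-completeness used in the paper.
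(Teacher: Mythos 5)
Your proposal is correct and is essentially identical to the paper's own proof: pass to the extension of Theorem \ref{3.7}, note that the surviving supercompact $\k$ is $\Pi_2^1$-indescribable so Theorem \ref{3.8} gives $\Sii$-completeness of $E_{reg}^{\k,\k}$ there, and compose with the continuous reduction $E_{reg}^{\k,\k}\leq_c E_{reg}^{2,\k}$. No discrepancies to report.
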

\begin{proof}
  Let $V[G]$ be the generic extension of Theorem \ref{3.7}. Since $\k$
  is supercompact in $V[G]$, it is $\Pi_2^1$-indescribable. By Theorem
  \ref{3.8}, $E_{reg}^\k$ is $\Sii$-complete, and by Theorem \ref{3.7}
  we know that $E_{reg}^\k\leq_c E_{reg}^{2,\k}$. We conclude that
  $E_{reg}^{2,\k}$ is $\Sii$-complete in $V[G]$.
\end{proof}
Let $NS$ denote the equivalence on $2^\k$ modulo the non-stationary
ideal, i.e. $\eta\ NS\ \xi$ if and only if $\eta^{-1}[1]\triangle
\xi^{-1}[1]$ is not stationary. For every stationary $S\subseteq \k$
the relation $E^{2,\k}_S$ is continuously reducible to $NS$. The
reduction $\mathcal{F}:2^\k\rightarrow 2^\k$ is defined as
follows: $$\mathcal{F}(\eta)(\a)=
\begin{cases}
  \eta(\a) &\mbox{if } \a\in S\\
  1 & \mbox{otherwise }
\end{cases}
$$
We conclude that the statement \textit{$NS$ is $\Sii$-complete} is
consistent, this follows from Corollary \ref{cor:E^2_reg} (it also
follows from Corollary \ref{cor2:E^2_reg}).

We will finish this article with a result related to model theory.

\begin{Thm}\label{thm:DLOatPi21}
  Let DLO be the theory of dense linear orderings without end
  points. If $\k$ is a $\Pi_2^1$-indescribable cardinal, then
  $\cong_{\DLO}$ is $\Sii$-complete.
\end{Thm}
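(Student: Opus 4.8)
The plan is to leverage the two main completeness results already established: Theorem~\ref{3.8}, which says $E^{\k,\k}_{reg}$ is $\Sii$-complete when $\k$ is $\Pi_2^1$-indescribable, and the known fact that $\cong_{\DLO}$ is always $\Sii$ (being an isomorphism relation). So the real content is to exhibit a continuous reduction $E^{\k,\k}_{reg}\le_c \cong_{\DLO}$; composing this with the completeness of $E^{\k,\k}_{reg}$ will show every $\Sii$ equivalence relation reduces to $\cong_{\DLO}$.

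The construction should proceed by coding an element $\eta\in\k^\k$ as a dense linear order $L_\eta$ in such a way that two codes are isomorphic as linear orders exactly when they agree modulo the non-stationary ideal restricted to the regular cardinals. The natural idea is to build $L_\eta$ by taking a fixed ``spine'' (say a copy of $\Q$ or of $\k$ with its order) and, at positions indexed by $\a<\k$, inserting order-theoretic gadgets whose isomorphism type records $\eta(\a)$. The isomorphism type of the whole order should then be governed by the sequence of local gadget-types up to a club-many correction, so that an isomorphism can be assembled from agreement on a club of regular points. I would want the local invariant at each regular $\a$ to be something an isomorphism must preserve --- for instance coding $\eta(\a)$ by the order type of a densely-inserted block of a given length, using that an isomorphism of dense linear orders must respect cofinality/coinitiality data and certain ``cut'' invariants at limit stages. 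The back-and-forth construction of an isomorphism would then only require matching the blocks along a club, and a non-stationary set of mismatches could be absorbed.

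The hard part will be the ``only if'' direction: showing that if $\eta$ and $\xi$ differ on a stationary subset of $\reg(\k)$, then $L_\eta\not\cong L_\xi$. This requires an invariant of the linear order that is genuinely sensitive to stationary (not merely club) disagreement at regular cardinals --- something in the spirit of an Ehrenfeucht--Fra\"iss\'e/club-guessing argument showing that any candidate isomorphism must agree with the identity on a club, whence a stationary set of coding-discrepancies produces a witness that no isomorphism can fix. The technically delicate point is arranging the coding so that isomorphism invariance is exactly ``equality on a club of regulars'' rather than on a club of all ordinals; this is presumably where the choice of gadget (e.g.\ using blocks whose internal structure forces breakpoints only at regular cardinals, or whose invariants are detected precisely at points of regular cofinality) must be made carefully.

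Concretely, I would first fix the coding map $\eta\mapsto\mathcal{A}_\eta$ via Definition~\ref{def:Isomorphism} and check it is continuous; second, prove $E^{\k,\k}_{reg}$-equivalent codes yield isomorphic orders by a club-indexed back-and-forth; third, prove the contrapositive via the invariant argument above; and finally conclude $\Sii$-completeness by composing with Theorem~\ref{3.8} and invoking that $\cong_{\DLO}$ is itself $\Sii$. I expect the second and third steps to carry all the difficulty, with the coding design being the crux that makes both directions go through simultaneously.
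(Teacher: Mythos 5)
Your overall architecture is exactly the paper's: reduce $E^{\k,\k}_{reg}$ to $\cong_{\DLO}$ by coding $\eta(\a)$ into order-theoretic data at stage $\a$, prove the positive direction by a back-and-forth along a club, prove the negative direction via a coinitiality invariant that an isomorphism must preserve, and conclude by composing with Theorem~\ref{3.8}. Your instinct that the invariant should be cofinality/coinitiality data at cuts is also the one the paper uses: given $f:\k\to\reg(\k)$ (first normalize an arbitrary $\eta$ so that its values lie in $\reg(\k)$ and are constant off $\reg(\k)$), at each limit stage $\a$ one inserts a block of order type $\mathbb{Q}\times f(\a)^*$ on top of everything built so far, so that the coinitiality of the final segment above $\bigcup_{i<\a}\mathcal{A}^f_i$ equals $f(\a)$; since any isomorphism maps the $\a$-th initial segment onto the $\a$-th initial segment for club-many $\a$, a stationary set of disagreements at regular $\a$ kills isomorphism. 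That much of your plan is sound.

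The genuine gap is in the step you describe as ``a non-stationary set of mismatches could be absorbed.'' If $f$ and $g$ agree only on a club $C$, then between points of $C$ the two chains insert blocks of \emph{different} coinitialities, so the partial structures $\bigcup_{i<\a}\mathcal{A}^f_i$ and $\bigcup_{i<\a}\mathcal{A}^g_i$ are genuinely non-identical there, and ``matching the blocks along a club'' does not by itself produce an isomorphism. Absorbing the mismatches requires a specific device that your proposal does not supply: the paper works inside a class $K$ of expanded structures (a DLO together with a set of marked cuts) with a strong-substructure relation $\preceq$ satisfying amalgamation, and at every successor stage it amalgamates in the next small strong extension according to a bookkeeping function, so that the resulting chain realizes every small strong extension of every initial segment cofinally often. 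It is this homogeneity/universality that lets the back-and-forth push a partial isomorphism across an interval where $f$ and $g$ disagree and re-synchronize at the next point of $C$. Without some such saturation mechanism the positive direction fails, and conversely one must check that the mechanism does not destroy the negative direction (the successor-stage insertions must not create new blocks of regular coinitiality sitting directly above an initial segment $\bigcup_{i<\a}\mathcal{A}^f_i$ for $\a$ regular; in the paper only the limit-stage block does that). Separately, your step ``fix the coding map $\eta\mapsto\mathcal{A}_\eta$ via Definition~\ref{def:Isomorphism} and check it is continuous'' understates a real issue: continuity of the final reduction requires choosing the enumerations $E_f$ of the domains coherently, so that $\mathcal{F}(f)\restriction\a=\mathcal{F}(g)\restriction\a$ forces the codes to agree on a corresponding initial segment; this coherence has to be built in by hand.
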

\begin{proof}
  By Theorem \ref{3.8} it is enough to show that $E_{reg}^\k\leq_c
  \cong_{\DLO}$. To show this, first we will construct models of
  DLO, $\mathcal{A}^{\mathcal{F}(f)}$, for every $f:\k\rightarrow \k$,
  such that $f\ E_{reg}^\k\ g$ if and only if
  $\mathcal{A}^{\mathcal{F}(f)}\cong
  \mathcal{A}^{\mathcal{F}(g)}$. After that we construct the reduction
  of $E_{reg}^\k$ to $\cong_{\DLO}$.
  
  Let us take the language $\mathcal{L}'=\{L,C,<,R\}$, with $L$ and
  $C$ as unary predicates, and $<$ and $R$ as binary relations. Let
  $K$ be the class of $\mathcal{L}'$-structures
  $\mathcal{A}=(dom(\mathcal{A}),L,C,<,R)$ that satisfy the following
  conditions:
  \begin{itemize}
  \item $L\cap C=\emptyset$.
  \item $L\cup C=dom(\mathcal{A})$.
  \item $<\ \subseteq L\times L$ is a dense linear order without end
    points on $L$.
  \item $R\subseteq L\times C$.
  \item Let us denote by $R^-(y,x)$ the formula $\neg R(y,x)$. For all
    $x\in C$, it holds that $R(\mathcal{A},x)\cup
    R^-(\mathcal{A},x)=L$, $R(\mathcal{A},x)$ has no largest element,
    and $R^-(\mathcal{A},x)$ has no least element and they are
    non-empty.
  \end{itemize}
  Let us define the following partial order $\preceq$ on $K$. We say
  that $\mathcal{A}\preceq \mathcal{B}$ iff:
  \begin{itemize}
  \item $\mathcal{A}\subseteq \mathcal{B}$,
  \item for all $x\in C^\mathcal{A}$, $R(\mathcal{B},x)=\{y\in
    L^\mathcal{B}\mid\exists z\in R(\mathcal{A},x), y<z\}$ and
    $R^-(\mathcal{B},x)=\{y\in L^\mathcal{B}\mid\exists z\in
    R^-(\mathcal{A},x), z<y\}$,
  \item for all $x\in C^\mathcal{B}\backslash C^\mathcal{A}$ there are
    $y\in R(\mathcal{B},x)$ and $z\in R^-(\mathcal{B},x)$ such that
    for all $a\in L^\mathcal{A}$, $a<y\vee a>z$.
  \end{itemize}
  Notice that it is possible to have a chain
  $\mathcal{A}_0\preceq\mathcal{A}_1\preceq\cdots$ of length $\a$ in
  $K$, and a structure $\mathcal{C}\in K$, such that
  $\bigcup_{i<\a}\mathcal{A}_i\in K$, $\mathcal{A}_i\preceq
  \mathcal{C}$ holds for all $i<\a$, and
  $\bigcup_{i<\a}\mathcal{A}_i\not\preceq \mathcal{C}$. But all other
  requirements of AEC's are satisfied, as one can easily see, in
  particular for every chain
  $\mathcal{A}_0\preceq\mathcal{A}_1\preceq\cdots$ of length $\a$ in
  $K$, $\bigcup_{i<\a}\mathcal{A}_i\in K$.
  \begin{Claim}\label{3.10.1}
    $(K,\preceq)$ has the amalgamation property and the joint
    embedding property.
  \end{Claim}
  \begin{proof}
    The joint embedding property is easily seen to follow from the
    amalgamation property. For the amalgamation property, let
    $\mathcal{A},\mathcal{B},\mathcal{C}\in K$ be such that
    $\mathcal{A}\preceq \mathcal{B}$ and $\mathcal{A}\preceq
    \mathcal{C}$ hold. Without loss of generality, we can assume that
    $dom(\mathcal{B})\cap dom(\mathcal{C})=dom(\mathcal{A})$. Let us
    construct $\mathcal{D}$ with $\dom(\mathcal{B})\cup
    dom(\mathcal{C})=dom(\mathcal{D})$,
    $L^{\mathcal{D}}=L^{\mathcal{B}}\cup L^{\mathcal{C}}$, and
    $C^{\mathcal{D}}=C^{\mathcal{B}}\cup C^{\mathcal{C}}$. To define
    $<^{\mathcal{D}}$ and $R^{\mathcal{D}}$, first define
    $<'=<^{\mathcal{B}}\cup <^{\mathcal{C}}$. For every two elements
    $b,c\in L^{\mathcal{D}}$ define $b<^{\mathcal{D}}c$ if either
    $b<'c$, or there is $a\in L^{\mathcal{A}}$ such that $b<'a<'c$, or
    $b\in L^{\mathcal{B}}$, $c\in L^{\mathcal{C}}$ and there is no
    $a\in L^{\mathcal{A}}$ such that $c<'a<'b$. For every $x\in
    C^\mathcal{A}$, $R(\mathcal{D},x)=R(\mathcal{B},x)\cup
    R(\mathcal{C},x)$. For all $x\in C^\mathcal{B}\backslash
    C^\mathcal{A}$, $y\in R(\mathcal{D},x)$ if and only if there
    exists $z\in L^\mathcal{B}$ such that $z\in R(\mathcal{B},x)$ and
    $y<^\mathcal{D}z$. For all $x\in C^\mathcal{C}\backslash
    C^\mathcal{A}$, $y\in R(\mathcal{D},x)$ if and only if there
    exists $z\in L^\mathcal{C}$ such that $z\in R(\mathcal{C},x)$ and
    $y<^\mathcal{D}z$. It is clear that $\mathcal{D}\in K$, and
    $\mathcal{B}\preceq\mathcal{D}$ and
    $\mathcal{C}\preceq\mathcal{D}$.
  \end{proof}
  Let us denote by $\mathcal{A}_1\oplus_{\mathcal{A}_0}\mathcal{A}_2$
  the structure $\mathcal{D}$, in Claim \ref{3.10.1}, that witnesses
  the amalgamation property for the structures
  $\mathcal{A}_0\preceq\mathcal{A}_1$ and
  $\mathcal{A}_0\preceq\mathcal{A}_2$. For every ordinal $\a$, let us
  denote by $\a^*$ the set $\a$ ordered by the reverse order $<^*$,
  i.e., $\beta<^*\gamma$ if $\gamma\in \beta$. Let us order the
  members of $\mathbb{Q}\times\a^*$ by: $(r_1,\a_1)<^{*\a}(r_2,\a_2)$
  iff $\a_1<^*\a_2$, or $\a_1=\a_2$ and $r_1<^\mathbb{Q} r_2$.
  
  Let $K_{{<}\kappa}$ be the collection of all members of $K$ of size less than $\kappa$. For every $\mathcal{A}\in K_{<\k}$, denote by
  $\{\mathcal{A}(i)\}_{i<\k}$ an enumeration of all the strong
  extensions of $\mathcal{A}$, i.e. $\mathcal{A}\preceq \mathcal{B}$,
  of size less than $\k$ (up to isomorphism over $\mathcal{A}$). Let
  $\Pi:\k\rightarrow\k\times\k$, $\Pi(\a)=(pr_1(\Pi(\a)),
  pr_2(\Pi(\a)))$ be a bijection such that $pr_1(\Pi(i))\leq i$ for
  all $i$. Given a function $f:\k\rightarrow reg(\k)$, let us
  construct the following sequence of models:
  \begin{itemize}
  \item $\mathcal{A}^f_0=(\mathbb{Q},\emptyset,<,\emptyset)$.
  \item For a successor ordinal, let
    $\mathcal{D}=\mathcal{A}^f_i\oplus_{\mathcal{A}^f_{pr_1(\Pi(i))}}\mathcal{A}^f_{pr_1(\Pi(i))}(pr_2(\Pi(i)))$. Define
    $L^{\mathcal{A}^f_{i+1}}=L^\mathcal{D}\cup\mathbb{Q}$,
    $C^{\mathcal{A}^f_{i+1}}=C^\mathcal{D}$,
    $<^{\mathcal{A}^f_{i+1}}=<^\mathcal{D}\cup <^\mathbb{Q}\cup
    \{(x,y)\mid x\in L^\mathcal{D}\wedge y\in \mathbb{Q}\}$, and
    $R^{\mathcal{A}^f_{i+1}}=R^\mathcal{D}$. Clearly
    $\mathcal{A}^f_{i+1}\in K$.
  \item For $i$ a limit ordinal, let
    $\mathcal{D}=\bigcup_{j<i}\mathcal{A}^f_j$. Define
    $L^{\mathcal{A}^f_{i}}=L^\mathcal{D}\cup(\mathbb{Q}\times
    f(i)^*)$, $C^{\mathcal{A}^f_{i}}=C^\mathcal{D}\cup \{x\}$,
    $<^{\mathcal{A}^f_{i}}=<^\mathcal{D}\cup <^{*f(i)}\cup \{(a,b)\mid
    a\in L^\mathcal{D}\wedge b\in \mathbb{Q}\times f(i)^*\}$, and
    $R^{\mathcal{A}^f_{i}}=R^\mathcal{D}\cup \{(y,x)\mid y\in
    L^\mathcal{D}\}$. Clearly $\mathcal{A}^f_{i}\in K$.
  \end{itemize}
  Define $\mathcal{A}^f_\k$ by $\bigcup_{j<\k}\mathcal{A}^f_j$. Then
  $\mathcal{A}^f=(L^{\mathcal{A}^f_\k},<^{\mathcal{A}^f_\k})$ is a
  model of DLO.
  
  Notice that if $i<\k$ and $\mathcal{C}\in K$,
  $\arrowvert\mathcal{C}\arrowvert<\k$, are such that
  $\mathcal{A}^f_i\preceq \mathcal{C}$, then there is $j<\k$ such that
  $\mathcal{A}^f_i(j)=\mathcal{C}$. Therefore there is $l<\k$ such
  that $\Pi(l)=(i, j)$,
  $\mathcal{A}^f_{pr_1(\Pi(l))}=\mathcal{A}^f_i$, and
  $\mathcal{A}^f_{pr_1(\Pi(l))}(pr_2(\Pi(l)))=\mathcal{C}$. We
  conclude that if $i<\k$ and $\mathcal{C}\in K_{<\k}$ are such that
  $\mathcal{A}^f_i\preceq \mathcal{C}$, then there is $j<\k$ and a
  strong embedding $F:\mathcal{C}\rightarrow \mathcal{A}^f_j$ such
  that $F(\mathcal{C})\preceq\mathcal{A}^f_j$ and $F\restriction
  \mathcal{A}^f_i=id$. Now we will show that if $f$ and $g$ are
  functions from $\k$ into $reg(\k)$ such that
  $f\restriction(\k\backslash reg(\k))=g\restriction(\k\backslash
  reg(\k))$, then $f\ E_{reg}^\k\ g$ if and only if
  $\mathcal{A}^f\cong \mathcal{A}^g$. First of all, let us prove that
  $(f, g)\in E^{\k,\k}_{reg}$ implies $\mathcal{A}^f\cong
  \mathcal{A}^g$. Suppose $(f, g)\in E^{\k,\k}_{reg}$. Then there is a
  club $C$ such that for all $\a\in C\cap reg(\k)$,
  $f(\a)=g(\a)$. Since $f\restriction(\k\backslash
  reg(\k))=g\restriction(\k\backslash reg(\k))$, we have that for all
  $\a\in C$, $f(\a)=g(\a)$. By the way the models $\mathcal{A}^f_\a$
  and $\mathcal{A}^g_\a$ were constructed for $\a$ a limit ordinal, we
  know that if $\a$ is such that $f(\a)=g(\a)$ and there is an
  isomorphism $F:\bigcup_{i<\a}\mathcal{A}^f_i\rightarrow
  \bigcup_{i<\a}\mathcal{A}^g_i$, then there is an isomorphism
  $G:\mathcal{A}^f_\a\rightarrow \mathcal{A}^g_\a$ such that
  $F\subseteq G$. For all $i<\k$ construct $\a_i<\k$ and a strong
  embedding $F_i$ such that the following hold:
  \begin{enumerate}
  \item For every $i<\k$ there is some $\gamma\in C$ such that
    $\a_i<\gamma<\a_{i+1}$.
  \item For all $i<j<\k$, $f_i\subseteq f_j$.
  \item The following holds for every limit ordinal $\beta<\k$:
    \begin{itemize}
    \item for every even $0<i<\o$,
      $dom(F_{\beta+i})=\mathcal{A}^f_{\a_{\beta+i}}$, and
      $F_{\beta+i}(\mathcal{A}^f_{\a_{\beta+i}})\preceq\mathcal{A}^g_{\a_{\beta+i+1}}$,
    \item for every odd $0<i<\o$,
      $rang(F_{\beta+i})=\mathcal{A}^g_{\a_{\beta+i}}$, and
      $F^{-1}_{\beta+i}(\mathcal{A}^g_{\a_{\beta+i}})\preceq\mathcal{A}^f_{\a_{\beta+i+1}}$,
    \item for $i=0$, $\a_\beta=\bigcup_{i<\beta}\a_i$,
      $dom(F_{\beta})=\mathcal{A}^f_{\a_{\beta}}$, and
      $rang(F_{\beta})=\mathcal{A}^g_{\a_{\beta}}$.
    \end{itemize}
  \end{enumerate}
  We will construct these sequences by induction. For $i=0$, take
  $\a_0=0$ and $F_0=id$.
  
  Successor case: Suppose $\beta$ is a limit ordinal or zero, and
  $0\leq i<\o$ are such that $\a_{\beta+i}$ and $F_{\beta+i}$ are
  constructed such that (i), (ii), and (iii) are satisfied. Let us
  start with the case when $i$ is odd. Choose $\a_{\beta+i+1}$ such
  that (i) holds. Since $F^{-1}(\mathcal{A}^g_{\a_{\beta+i}})\preceq
  \mathcal{A}^f_{\a_{\beta+i+1}}$, there are $\mathcal{C}\in K_{<\k}$
  and $F\supseteq F_{\beta+i}$ such that
  $\mathcal{A}^g_{\a_{\beta+i}}\preceq \mathcal{C}$ and
  $F:\mathcal{A}^f_{\a_{\beta+i+1}}\rightarrow \mathcal{C}$ is an
  isomorphism. By the observation we made above, there is $j<\k$ and a
  strong embedding $G:\mathcal{C}\rightarrow \mathcal{A}^g_j$ such
  that $G(\mathcal{C})\preceq\mathcal{A}^g_j$ and $G\restriction
  \mathcal{A}^g_{\a_{\beta+i}}=id$. Define $F_{\a_{\beta+i+1}}=G\circ
  F_{\a_{\beta+i}}$. Clearly $F_{\a_{\beta+i+1}}$ satisfies conditions
  (ii) and (iii). The case when $i$ is even is similar to the odd
  case.
  
  Limit case: Suppose $\beta$ is a limit ordinal such that for all
  $i<\beta$, $\a_{i}$ and $F_{i}$ are constructed such that (i), (ii),
  and (iii) are satisfied. By (i), we know that
  $\a_\beta=\bigcup_{i<\beta}\a_i$ is a limit point of $C$, so
  $f(\a_\beta)=g(\a_\beta)$. On the other hand, by conditions (ii) and
  (iii) we know
  that $$\bigcup_{i<\beta}F_i:\bigcup_{i<\beta}\mathcal{A}^f_{\a_i}\rightarrow\bigcup_{i<\beta}\mathcal{A}^g_{\a_i}$$
  is an isomorphism. Therefore, there is an isomorphism
  $G:\mathcal{A}^f_\a\rightarrow \mathcal{A}^g_\a$ such that
  $\bigcup_{i<\beta}F_i\subseteq G$. We conclude that $F_{\a_\beta}=G$
  satisfies (ii) and (iii).
  
  Finally, notice
  that $$\bigcup_{i<\k}F_i:\bigcup_{i<\k}\mathcal{A}^f_{\a_i}\rightarrow\bigcup_{i<\k}\mathcal{A}^g_{\a_i}$$
  is an isomorphism. We conclude that $\mathcal{A}^f$ and
  $\mathcal{A}^g$ are isomorphic.
  
  Let us prove that $\mathcal{A}^f\cong \mathcal{A}^g$ implies
  $(f,g)\in E^{\k,\k}_{reg}$. Suppose, towards a contradiction, that
  $(f,g)\notin E^{\k,\k}_{reg}$ and there is an isomorphism
  $F:\mathcal{A}^f\rightarrow \mathcal{A}^g$. Since $F$ is an
  isomorphism, there is a club $C$ such that
  $F(\bigcup_{i<\a}\mathcal{A}^f_{i})=\bigcup_{i<\a}\mathcal{A}^g_{i}$
  holds for all $\a\in C$. Since $(f, g)\notin E^{\k,\k}_{reg}$,
  $C\cap \{\a\in reg(\k)\mid f(\a)\neq g(\a)\}$ is nonempty. Take
  $\a\in C\cap \{\gamma\in reg(\k)\mid f(\gamma)\neq g(\gamma)\}$. We
  know that
  $F(\bigcup_{i<\a}\mathcal{A}^f_{i})=\bigcup_{i<\a}\mathcal{A}^g_{i}$
  and $f(\a)\neq g(\a)$. Hence, the co-initiality of $\{a\in
  \mathcal{A}^f\mid \forall b\in \bigcup_{i<\a}\mathcal{A}^f_i
  (b<^{\mathcal{A}^f}a)\}$ with respect to $<^{\mathcal{A}^f}$ is
  $f(\a)$. Since $F$ is an isomorphism and
  $F(\bigcup_{i<\a}\mathcal{A}^f_{i})=\bigcup_{i<\a}\mathcal{A}^g_{i}$,
  the co-initiality of $\{a\in \mathcal{A}^g\mid \forall b\in
  \bigcup_{i<\a}\mathcal{A}^g_i (b<^{\mathcal{A}^g}a)\}$ with respect
  to $<^{\mathcal{A}^g}$ is also $f(\a)$. We conclude that
  $f(\a)=cf(g(\a))$, so $f(\a)=g(\a)$, a contradiction.  To finish
  with the construction of the models, let us define
  $\mathcal{A}^{\mathcal{F}(f)}$ for all $f:\k\rightarrow \k$. Fix a
  bijection $G:\k\rightarrow reg(\k)$. Define
  $\mathcal{F}:\k^\k\rightarrow\k^\k$ by $$\mathcal{F}(f)(\a)=
  \begin{cases}
    G(f(\a)) &\mbox{if } \a\in reg(\k)\\
    0 & \mbox{otherwise }
  \end{cases}
  $$
  Clearly $f\ E^{\k,\k}_{reg}\ g$ if and only if $\mathcal{F}(f)\
  E^{\k,\k}_{reg}\ \mathcal{F}(g)$, and $\mathcal{F}(f)\
  E^{\k,\k}_{reg}\ \mathcal{F}(g)$ if and only if
  $\mathcal{A}^{\mathcal{F}(f)}$ and $\mathcal{A}^{\mathcal{F}(g)}$
  are isomorphic.  Now we will construct a reduction of
  $E^{\k,\k}_{reg}$ to $\cong_{\DLO}$ by coding the models
  $\mathcal{A}^{\mathcal{F}(f)}$ by functions $\eta:\k\rightarrow\k$.
  
  Clearly the models $\mathcal{A}^{\mathcal{F}(f)}$ satisfy
  that $$\mathcal{F}(f)\restriction\alpha=\mathcal{F}(g)\restriction\alpha\Leftrightarrow
  \mathcal{A}^{\mathcal{F}(f)}_\alpha=\mathcal{A}^{\mathcal{F}(g)}_\alpha.$$
  For every $f\in \kappa^\kappa$ define $C_f\subseteq Card\cap\kappa$
  such that for all $\alpha\in C_f$, it holds that for every
  $\beta<\alpha$,
  $\arrowvert\mathcal{A}_\beta^{\mathcal{F}(f)}\arrowvert<\arrowvert\mathcal{A}_\alpha^{\mathcal{F}(f)}\arrowvert$. For
  every $f\in \kappa^\kappa$ and $\alpha\in C_f$ choose a bijection
  $E^\alpha_f:dom(\mathcal{A}_\alpha^{\mathcal{F}(f)})\rightarrow\arrowvert\mathcal{A}_\alpha^{\mathcal{F}(f)}\arrowvert$
  such that for all $\beta<\alpha$ in $C_f$ it holds that
  $E_f^\beta\subseteq E_f^\alpha$. Then $\bigcup_{\alpha\in
    C_f}E_f^\alpha=E_f$ is such that
  $E_f:dom(\mathcal{A}^{\mathcal{F}(f)})\rightarrow\kappa$ is a
  bijection, and for every $f,g\in \kappa^\kappa$ and $\alpha<\kappa$
  the following holds: If
  $\mathcal{F}(f)\restriction\alpha=\mathcal{F}(g)\restriction\alpha$,
  then $E_f\restriction
  dom(\mathcal{A}^{\mathcal{F}(f)}_\alpha)=E_g\restriction
  dom(\mathcal{A}^{\mathcal{F}(g)}_\alpha)$.
  Let $\pi$ be the bijection in Definition 1.6. Define the function 
  $\mathcal{G}$ by: 

  $$\mathcal{G}(\mathcal{F}(f))(\alpha)=
  \begin{cases} 
    1 &\mbox{if } \alpha=\pi(m,a_1,\ldots,a_n) \text{ and } 
    \mathcal{A}^{\mathcal{F}(f)}\models P_m(E_f^{-1}(a_1),\ldots,E_f^{-1}(a_n))\\
    0 & \mbox{in the other case. }
  \end{cases}
  $$

  To show that $\mathcal{G}$ is continuous, let
  $[\eta\restriction\alpha]$ be a basic open set and $\xi\in
  \mathcal{G}^{-1}[[\eta\restriction\alpha]]$. There is $\beta\in
  C_\xi$ such that for all $\gamma<\alpha$, if
  $\gamma=\pi(m,a_1,a_2,\ldots,a_n)$, then $E^{-1}_\xi(a_i)\in
  dom(\mathcal{A}^\xi_\beta)$ holds for all $i\leq n$. Since for all
  $\zeta\in [\xi\restriction\beta]$ it holds that
  $\mathcal{A}^\xi_\beta=\mathcal{A}^\zeta_\beta$, for every
  $\gamma<\alpha$ such that $\gamma=\pi(m,a_1,a_2,\ldots,a_n)$, it
  holds that
  $$\mathcal{A}^{\xi}\models P_m(E_\xi^{-1}(a_1),E_\xi^{-1}(a_2),\ldots,E_\xi^{-1}(a_n))$$ if and only if $$\mathcal{A}^{\zeta}\models P_m(E_\zeta^{-1}(a_1),E_\zeta^{-1}(a_2),\ldots,E_\zeta^{-1}(a_n))$$
  We conclude that $\mathcal{G}(\zeta)\in [\eta\restriction\alpha] $,
  and $\mathcal{G}\circ\mathcal{F}$ is a continuous reduction of
  $E^{\k,\k}_{reg}$ to $\cong_{\DLO}$.
\end{proof}
\providecommand{\bysame}{\leavevmode\hbox to3em{\hrulefill}\thinspace}
\providecommand{\MR}{\relax\ifhmode\unskip\space\fi MR }
\providecommand{\MRhref}[2]{%
  \href{http://www.ams.org/mathscinet-getitem?mr=#1}{#2} }
\providecommand{\href}[2]{#2}

\end{document}